\definecolor{Theme}{gray}{0}
\renewenvironment{proof}[1][\proofname]{\par
   \pushQED{\begin{center}\textcolor{Theme}{\ensuremath{\blacksquare}}\end{center}}%
  \normalfont \topsep6\p@\@plus6\p@\relax
  \trivlist
  \item[\hskip\labelsep
        \bfseries
    #1\@addpunct{.}]\ignorespaces
}{%
  \popQED\endtrivlist\@endpefalse
}
\newtheoremstyle{Thm}  
  {10pt}   
  {20pt}   
  {\itshape}  
  {0pt}       
  {\bfseries} 
  {}         
  {\newline}  
  {\textcolor{Theme}{\thmname{ #1}\thmnumber{ #2}}\textbf{\thmnote{ (#3)}}} 
  \newtheoremstyle{Thmused}  
  {10pt}   
  {20pt}   
  {\itshape}  
  {0pt}       
  {\bfseries} 
  {}         
  {\newline}  
  {\textcolor{Theme}{\thmname{ #1}} \textbf{\thmnote{#3}}} 
\newtheoremstyle{Def}  
  {10pt}   
  {20pt}   
  {}  
  {0pt}       
  {\bfseries} 
  {}         
  {\newline}  
  {\textcolor{Theme}{\thmname{ #1} \thmnumber{#2}}\textbf{\thmnote{ (#3)}}} 
\theoremstyle{Thmused} 
\theoremstyle{Thm} \newtheorem{Thm}{Theorem}
\theoremstyle{Thm} \newtheorem{Lemma}{Lemma}[section]
\theoremstyle{Thm} \newtheorem{Prop}{Proposition}[section]
\theoremstyle{Thm} \newtheorem{Cor}{Corollary}[Prop]
\theoremstyle{Thm} 
\theoremstyle{Thm} 
\theoremstyle{Def} 
\theoremstyle{Def} 
\theoremstyle{Def} 
\theoremstyle{Def} \newtheorem*{Rem}{Remark}
\theoremstyle{Def} 
\newcommand{\R}{\mathbbm{R}}
\newcommand{\K}{\mathbbm{K}}
\newcommand{\C}{\mathbbm{C}}
\newcommand{\Oo}{\mathbbm{O}}
\newcommand{\Z}{\mathbbm{Z}}
\newcommand{\Hh}{\mathbbm{H}}
\newcommand{\D}{\mathbbm{D}}
\newcommand{\wt}{\widetilde}
\newcommand{\SL}{\mathop{\rm{SL}}}
\newcommand{\SO}{\mathop{\rm{SO}}}
\newcommand{\U}{\mathrm{U}}
\newcommand{\Super}{\mathrm{S}}
\newcommand{\Ss}{\mathrm{S}}
\newcommand{\SU}{\mathop{\rm{SU}}}
\newcommand{\Sp}{\mathop{\rm{Sp}}}
\newcommand{\Spin}{\mathop{\rm{Spin}}}
\newcommand{\F}{\mathrm{F}}
\newcommand{\SOe}{\mathop{\rm{SO_e}}}
\newcommand{\Mat}{\mathop{\rm{Mat}}}
\newcommand{\Res}{\mathop{\rm{Res}}}
\newcommand{\Tr}{\mathop{\rm{Tr}}}
\newcommand{\End}{\mathop{\rm{End}}}
\newcommand{\Id}{\mathop{\rm{Id}}}
\newcommand{\Ind}{\mathop{\rm{Ind}}}
\newcommand{\triv}{\mathrm{triv}}
\newcommand{\p}{\mathfrak{p}}
\newcommand{\g}{\mathfrak{g}}
\newcommand{\h}{\mathfrak{h}}
\newcommand{\m}{\mathfrak{m}}
\newcommand{\n}{\mathfrak{n}}
\newcommand{\Ak}{\mathfrak{k}}
\newcommand{\Aa}{\mathfrak{a}}
\newcommand{\At}{\mathfrak{t}}
\newcommand{\soe}{\mathfrak{so}}
\newcommand{\HC}{\mathscr{H}\hspace{-0.4mm}\mathscr{C}}
\newcommand{\Zentrum}{\mathcal{Z}}
\newcommand{\E}{\mathscr{E}}
\newcommand{\RR}{\mathscr{R}}
\newcommand{\Ad}{\mathsf{Ad}}
\newcommand{\Hi}{\mathscr{H}}
\newcommand{\IWk}{{\textrm{\bf k}}}
\newcommand{\IWH}{{\textrm{\bf H}}}
\newcommand{\IWn}{{\textrm{\bf n}}}
\newcommand{\fonction}[5]{\begin{array}{lrcl}
#1: & #2 & \longrightarrow & #3 \\
    & #4 & \longmapsto & #5 \end{array}}
\renewcommand{\textbf}[1]{\begingroup\bfseries\mathversion{bold}#1\endgroup}
\newcommand*\circled[1]{\tikz[baseline=(char.base)]{
            \node[shape=circle,draw,inner sep=2pt] (char) {#1};}}
\definecolor{Rouge}{RGB}{200,0,0}
\definecolor{Vert}{RGB}{0,147,83}
\definecolor{Bleu}{RGB}{0,89,157}
\title[Residue representations - the rank one case]{Residue representations - the rank one case}
\author[Resonances on homogeneous vector bundles]{Simon ROBY}
\address{Yau Mathematical Sciences Center, Tsinghua University, Beijing, 100084, China}
\email{roby@mail.tsinghua.edu.cn}
\date{}
\subjclass[2010]{Primary: 22E45, 20G05, 22D10 ; secondary: 43A85,58J50}
\begin{document}

\maketitle

\tableofcontents

\begin{abstract}
With each resonance of the Laplacian acting on the compactly supported sections of a homogeneous vector bundle over a Riemannian symmetric space of the non-compact type, One can associate a \emph{residue representation}. The purpose of this paper is to study them. The symmetric space is assumed to have rank-one but the irreducible representation $\tau$ of $K$ defining the vector bundle is arbitrary. We give an algorithm that aims at determining if these representations are irreducible, finding their Langlands parameters, their Gelfand-Kirillov dimensions and wave front sets. As an example, we apply this algorithm to the Laplacian of the $p$-forms in the cases of all the classical real rank-one Lie groups. 
\end{abstract}

\section{Introduction}

Let $G$ be a connected non-compact real semisimple Lie group with finite center, $K$ a maximal compact subgroup of $G$ and $G/K$ the corresponding Riemannian symmetric space of non-compact type. Let $(\tau,V_\tau)$ be an representation of $K$, which we will assume without loss of generality to be irreducible. We consider the homogeneous vector bundle $E_\tau = (G\times V_\tau )/\sim$ over $G/K$, where $(g,v)\sim(gk^{-1},\tau(k)\cdot v)$ for all $g\in G$, $k\in K$ and $v\in V_\tau$. If $\tau$ is the trivial representation of $K$, then the bundle $E_\tau$ can be identified to $G/K$. We will refer to this case as the \emph{scalar case}, in opposite to the \emph{bundle case}, when $\tau$ is not trivial. 
If the Lie group $G$ is the Lorentz group $\SOe(n,1)$ and $K = \SO(n)$, we get $G/K = H^n(\R)$, the real hyperbolic space. If $V_\tau$ is one dimensional, $E_\tau$ is a line bundle over $G/K$. 
The symmetric space $G/K$ has maximal flat subspaces, all of the same dimension, called the (real) rank of $G/K$. For instance, the rank of $H^{n}(\R)$ is 1. 
Since $G/K$ is a symmetric space of the Lie group $G$, all natural operators acting on $G/K$, like the Laplacian and its resolvent, are $G$-invariant. They can therefore be studied using the representation theory of $G$. We can generalize these operators to the bundle case, using the Casimir operator of $U(\g_\C)$ (see sections \ref{subsection hom diff op}, \ref{subsection Laplace}). We get operators acting on the sections of $E_\tau$. One can therefore address the problem of the meromorphic continuation of the resolvent of the Laplacian across its spectrum when restricted to smooth functions with compact support (a smooth sections with compact support in the bundle case ). The poles of meromorphically extended resolvent, if any, are called the resonances of the Laplacian.
The study of these poles in the scalar case has been carried out by several authors. Among them, we mention Guillop\'{e} and Zworski \cite{GuillopeZworski95}, Miatello and Will \cite{MiaWill}, and Hilgert and Pasquale \cite{HilgPasq} for the rank one case. The scalar higher-rank case is a longstanding open problem. Partial results were obtained by Mazzeo and Vasy \cite{MaVa05} and Strohmaier \cite{Stro05}. Complete results for most of the rank-two cases were proved in a series of papers by Hilgert, Pasquale and Przebinda \cite{HiPaPz3,HiPaPz2, HiPaPz}. For the Laplacian acting on line bundles over complex hyperbolic spaces, the resonances has been completely determined by Will in \cite{Will}. The complete list of the resonances for the Laplacian acting on sections on $E_\tau$, when $G/K$ if of rank one and $\tau$ is arbitrary was determined in \cite{ROBY2021}.

With each resonance, one can associate a representation, called a \emph{residue representation} (see section \ref{section residue repr}). These representations have been determined for the scalar rank-one case by different method in \cite{MiaWill} and in \cite{HilgPasq}. Moreover, in \cite{Will}, this has been also done for the case of $\SL(2,\R)$, in the (line) bundle case. For the general rank one case, the residue representations were determined in \cite{ROBY2021} under the assumption that the resonances arise from the poles the trivial Plancherel density, which implies that $\tau$ has to occur in the spherical principal series representations. 

In this paper, we take on the case of $G/K$ of rank one and $\tau$ an arbitrary representation of $K$. 
Our main result is an algorithm, presented in section 5, which provides a simple way to compute the Langlands parameters, the Gelfand-Kirillov dimension and the wave front set, starting from the highest weight of $\tau$. 
Our methods to find these representations are based on the description of the composition series of the principal series representations, as one can find in the book of Collingwood \cite{CollinRROLG}. The results are recalled in section \ref{section about Col}. As an application of our algorithm, in section \ref{section pforms}, we compute the residue representations attached to the resonances of the Laplacian acting on the $p$-forms over $G/K$, for the classical rank one $G$ cases (all cases except the exceptional one of $\F_4$). Our choice of restricting ourself to the classical $G$'s is motivated by the significance of the hyperbolic spaces and not by any obstacle one could face in applying our algorithm. The quaternionic case is especially interesting because it presents situations in which $\tau$ has multiplicity 2 inside the principal series representations, the real and the complex cases being always multiplicity free. The exceptional case does not seem to give any other problems and the algorithm works completely.

\section{Notations and background}
\label{notations}

We shall use the standard notations $\Z_+,~ \Z,~\R, ~\C, ~\Hh, ~\Oo$ and $\C^\times$ for the nonnegative integers, the integers, the real numbers, the complex numbers, the quaternions, the octonions and the nonzero complex numbers. For a complex number $z \in \C$, we denote by $\Re(z)$ and $\Im(z)$ its real and imaginary parts, respectively. The normalization constants in the Haar measures do not matter in our computations. Hence, integrals have to be considered up to positive multiples. 

\subsection{Context} Let $G$ be a connected non-compact real semisimple Lie group with finite center and let $B(\cdot,\cdot)$ be the Killing form of the Lie algebra $\g$ of $G$. We denote by $\theta$ a Cartan involution of $\g$.
We denote by $\Ak$ the set of fixed points of $\theta$ and by $\p$ the eigenspace of $\theta$ for the eigenvalue $-1$. In other words: 
$$\Ak = \{X\in\g ~|~ \theta X = X\} ~~~\text{ and }~~~ \p = \{X\in\g ~|~ \theta X = -X\}~.$$
Then $\Ak$ is a Lie subalgebra of $\g$. The corresponding connected Lie subgroup of $G$ is maximal compact. We indicate it by $K$. 
The Cartan decomposition of the Lie algebra $\g$ is given by: $\g = \Ak \oplus \p$. 

Let $\Aa$ be a maximal abelian subspace of $\p$ and $A = \exp \Aa$ its associated Lie subgroup of $G$. The exponential map $\exp:\g \to G$ restricts to a diffeomorphism between $\Aa$ and $A$. The inverse map is the logarithm “$\log$”. In this paper, we are restricting ourself to real rank one groups $G$. In other words, we suppose that $\Aa$ is one-dimensional. 

Rank one symmetric spaces of the non-compact type are classified into three infinite families -- namely, the real, complex and quaternionic hyperbolic spaces -- and one exceptional example, the octonionic hyperbolic plane. In the following we will refer to these different cases respectively as the “real”, the “complex”, the “quaternionic”, and the “octonionic” case. The table at the end of subsection \ref{subsection Root system}  lists the groups $G$ and $K$, we choose in this paper, to realise each case. 

For every Lie algebra $\h$, we denote by $\h_\C$ its complexification, by $U(\h_\C)$ the universal enveloping algebra of $\h_\C$ and by $\Zentrum(\h_\C)$ the center of $U(\h_\C)$. 

\subsection{Root and restricted root systems} \label{subsection Root system} Let $\Aa^\ast$ be the vector space of linear forms on $\Aa$ and $\Aa_\C^\ast$ its complexification. The set $\Sigma$ of restricted roots of the pair $(\g,\Aa)$ consists of all nonzero linear forms $\alpha \in \Aa^\ast$ for which the vector space $$\g_\alpha:= \{X \in \g ~|~ [H,X] = \alpha(H)X \text{ , for every }H \in \Aa\}$$ contains nonzero elements. The dimension of $\g_\alpha$ is called the multiplicity of the root $\alpha$ and is denoted by $m_\alpha$. 

Let $\Sigma_+$ be a fixed set of positive restricted roots and let $\rho_\Aa:= \displaystyle\frac{1}{2}\sum_{\alpha \in \Sigma^+} m_\alpha \alpha $ be the half sum of the positive roots counted with their multiplicities.
Set $\n = \displaystyle\bigoplus_{\alpha \in \Sigma_+}\g_\alpha$ and $N$ the connected Lie subgroup of $G$ having $\n$ for Lie algebra.
According to the Iwasawa decomposition $G = KAN$, every element $x$ in $G$ can be uniquely written as \begin{equation}\label{eq Iwasawa}
   x = \IWk(x) e^{\IWH(x)} \IWn(x) 
\end{equation}
where $\IWk(x) \in K$, $\IWH(x) \in \Aa$ and $\IWn(x) \in N$. In the following, we set
\begin{equation}
    a^\lambda := e^{\lambda(\log a)}~ \text{ for } a\in A \text{ and }\lambda \in \Aa^*_\C~.
\end{equation}

Since $G$ is of real rank one, the set $\Sigma$ is either equal to $\{\pm \alpha \}$ or $\{\pm \alpha,\pm \alpha/2 \}$. Among the groups listed in Table \ref{Table cases for G and K}, only $G = \Spin(n,1)$ has restricted root system $\{\pm \alpha \}$. As a system of positive roots $\Sigma_+$ we choose $\{\alpha \}$ or $\{\alpha,\alpha/2\}$.
Then \mbox{$\rho_\Aa= \frac{1}{2}\big(m_\alpha + \frac{m_{\alpha/2}}{2} \big) \alpha$}, where we set $m_{\alpha/2} = 0$, if $\Sigma = \{\pm \alpha \}$.

The Killing form $B$ is positive definite on $\p$, so $\<X,Y\> :=B(X,Y)$ defines a Euclidean structure on $\p$ and on $\Aa \subset\p$. For all $\lambda\in\Aa^*$, let $H_\lambda$ denote the unique element in $\Aa$ such that $\<H_\lambda,H\> =\lambda(H)$ for all $H \in \Aa$. We extend the inner product to $\Aa^\ast$ by setting $\<\lambda,\mu\> :=\<H_\lambda,H_\mu\>$ for all $\lambda, ~\mu\in\Aa^*$. Further, we denote the $\C$-bilinear extension of $\<\cdot,\cdot\>$ on $\Aa^*$ to $\Aa_\C^*$ by the same symbol. 
We identify $\Aa_\C^*$ with $\C$ by means of the isomorphism: 

\begin{equation}
    \label{eq id Aa and C}\begin{array}{ccc}
    \Aa_\C^\ast & \longrightarrow & \C \\
    \lambda & \longmapsto & \lambda_\alpha:= \frac{\<\lambda,\alpha\>}{\<\alpha,\alpha\>}
\end{array}
\end{equation}
which identifies $\rho_\Aa$ with $\rho_\alpha:=\frac{1}{2}\left(m_\alpha+\frac{m_{\alpha/2}}{2}\right)$.

\vspace{5mm}
\begin{center}
 \begin{tabular}{|Sc|Sc|Sc|Sc|Sc|Sc|Sc|Sc|} 
 \hline
 Case &$G/K$ & $G$ & $K$ & $\Sigma^+$ & $m_{\alpha/2}$ & $m_\alpha$ & $\rho_\alpha$ \\
 \hline
 1 &$H^n(\R)$&$\Spin(2n,1)$ &$\Spin(2n)$ & $\{\alpha\}$ & $0$ & $2n-1$ & $n-\frac{1}{2}$ \\  
 \hline
2&$H^n(\C)$&$\SU(n,1)$ &$\Ss(\U(n) \times \U(1))$ & $\{\alpha/2,\alpha\}$ & $2n-2$ & $1$ & $\frac{n}{2}$\\ 
 \hline
3&$H^n(\Hh)$&$\Sp(n,1)$ &$\Sp(n)$ & $\{\alpha/2,\alpha\}$ & $4n-4$ & $3$ & $n+\frac{1}{2}$\\
 \hline
 4&$H^2(\Oo)$&$\F_{4~(-20)}$ &$\Spin(9)$ & $\{\alpha/2,\alpha\}$ & $8$ & $7$ & $\frac{11}{2}$\\
 \hline
\end{tabular}
\captionof{table}{Rank one Lie groups}\label{Table cases for G and K}
\end{center}

Let $M$ be the centralizer of $\Aa$ in $K$, $\m$ its Lie algebra, and let $\At$ be a Cartan subalgebra of $\m$. Then the Lie algebra $\h = \At\oplus\Aa$ is a Cartan subalgebra of $\g$.
The set $\Pi$ of roots of the pair $(\g_\C,\h_\C)$ consists of all nonzero linear forms $\varepsilon \in \h_\C^\ast$ for which the vector space
$$\g_\varepsilon:= \{X \in \g_\C ~|~ [H,X] = \varepsilon(H)X \text{ for every }H \in \h_\C\}$$
contains nonzero elements.
\newline We choose a set $\Pi^+$ of positive roots in $\Pi$ which is compatible with $\Sigma_+$, i.e. such that a root $\varepsilon\in \Pi$ is positive when $\varepsilon|_\Aa \in \Sigma_+$. We denote then by $\wt \Pi_+$ the corresponding positive Weyl chamber. 
Let also $\Pi_\Ak$ (respectively $\Pi_{\Ak}^{+}$) be the set of (positive) roots of the pair $(\Ak_\C, \h_\C| _{\Ak_\C})$ and $\Pi_\m$ (respectively $\Pi_{\m}^{+}$) the set of (positive) roots of the pair $(\m_\C, \At_\C)$. 
Finally, we denote the respective half sums of positive roots by $\rho$, $\rho_\Ak$ and $\rho_\m$. Recall the basic but important facts that $\rho = \rho_\Aa +\rho_\m$ and $\langle \rho_\Aa,\rho_\m\rangle = 0$. 

We denote by $\{\varepsilon_i\}_{i=1,\ldots,n}$ the usual dual basis of the Cartan Lie algebra of the pair $(\g_\C,\m_\C\oplus\Aa_\C)$. We recall the root system in each case:

\vspace{5mm}
\begin{center}
 \begin{tabular}{|Sc|Sc|Sc|Sc|} 
 \hline
 Case & $\Pi^+$ & $\Pi_{\Ak_\C}^+$ & $\Pi_{\m_\C}^+$\\
 \hline
 1 &$\begin{array}{cc}
       \varepsilon_i \pm \varepsilon_{j}, & ~1\leq i<j\leq n, \\
      \varepsilon_i, &~1\leq i\leq n
 \end{array}$&$\varepsilon_i \pm \varepsilon_{j}, ~1\leq i<j\leq n$&$\begin{array}{cc}
       \varepsilon_i \pm \varepsilon_{j}, & ~2\leq i<j\leq n, \\
      \varepsilon_i, &~2\leq i\leq n
 \end{array}$\\  
 \hline
2&$\varepsilon_i - \varepsilon_j, ~1\leq i<j\leq n+1$&$\varepsilon_i - \varepsilon_j, ~1\leq i<j\leq n$&$\varepsilon_i - \varepsilon_j, ~2\leq i<j\leq n$\\ 
 \hline
3&$\begin{array}{cc}
    \varepsilon_i \pm \varepsilon_{j}, &~1\leq i<j\leq n+1, \\2\varepsilon_i, &~1\leq i\leq n+1  
\end{array}$&$\begin{array}{cc}
    \varepsilon_i \pm \varepsilon_{j}, &~2\leq i<j\leq n+1, \\2\varepsilon_i, &~2\leq i\leq n+1 
\end{array}$ &$\begin{array}{cc}
    \varepsilon_i \pm \varepsilon_{j}, &~3\leq i<j\leq n+1, \\2\varepsilon_i, &~3\leq i\leq n+1 
\end{array}$\\
 \hline
 4&$\begin{array}{cc}
     \varepsilon_i \pm \varepsilon_j,&~ 1\leq i < j \leq 4\\\varepsilon_i, &~ 1\leq i \leq 4\\ \multicolumn{2}{c}{\frac{1}{2}( \varepsilon_1 \pm \varepsilon_2\pm \varepsilon_3\pm \varepsilon_4)}\end{array}$
     &$\begin{array}{cc}
     \varepsilon_i \pm \varepsilon_j,&~ 1\leq i < j \leq 4\\\varepsilon_i, &~ 1\leq i \leq 4 \end{array}$&$\begin{array}{cc}
     \varepsilon_i \pm \varepsilon_j,&~ 2\leq i < j \leq 4\\\varepsilon_i, &~ 2\leq i \leq 4 \end{array}$\\
 \hline
\end{tabular}
\captionof{table}{Positive root systems of rank one Lie groups}\label{Table cases for G and K}
\end{center}

\subsection{Homogeneous vector bundles}
Let $\hat{K}$ be the set of (equivalence classes of ) irreducible unitary representation of $K$ and let us fix $(\tau, V_\tau)\in \hat{K}$. 
Let $E_\tau:= G \times V_\tau/\sim$ denote the homogeneous vector bundle over $G/K$ associated with $\tau$. For the definition and properties of $E_\tau$, we refer the reader to \cite[\S 5.2 p. 114]{Wall1}. 
We write $\Gamma^\infty(E_\tau)$ for the space of all smooth sections of $E_\tau$. There is an isomorphism between $\Gamma^\infty (E_\tau)$ and the set of $\tau$-radial functions
\begin{equation*}
    C^\infty(G, \tau): = \{f:G \rightarrow V_\tau \text{ smooth}~| ~f(xk) = \tau(k^{-1}) f(x)~\text{ for all } x \in G \text{ and } k \in K \}
\end{equation*}

\subsection{Principal series representations} Let $\hat{M}$ be the set of all equivalence classes of irreducible unitary representations of $M$.
For $(\sigma,V_\sigma) \in \hat{M}$ and $\lambda \in \Aa^\ast_\C$, we denote by  $$\pi^\sigma_\lambda=\Ind{}^G_{MAN}(\sigma \otimes e^{i\lambda} \otimes \triv)$$ 
the representation of $G$ induced from $MAN$ by the representation $\sigma \otimes e^{i\lambda} \otimes \triv$ of $MAN$.
We will use the same notation for its derived representation of $\g$ too. The representation space $\Hi^\sigma_\lambda$ 
of $\pi^\sigma_\lambda$ is the Hilbert space completion of 
\begin{equation}\label{defprincseries}
    \{f: G \rightarrow V_\sigma ~|~ f(xman) = a^{-i\lambda-\rho}\sigma(m^{-1})f(x) ~\text{ for all }x \in G,~ m\in M,~ a\in A \text{ and }n\in N  \}
\end{equation}
with respect of the $L^2$ inner product
$$\<f,g\>_{\sigma} = \int_K\<f(k),g(k)\>_{V_\sigma}~dk\,,$$
where $\<\cdot,\cdot\>_{V_\sigma}$ is the inner product on $V_\sigma$ making $\sigma$ unitary.
The action of $\pi^\sigma_\lambda$ on $\Hi^\sigma_\lambda$ is given by 
$$\pi^\sigma_\lambda(g)f(x) := f(g^{-1}x)$$
for all $g,x \in G$ and $f\in \Hi^\sigma_\lambda$. The set $\{\pi^\sigma_\lambda ~|~\lambda\in \Aa_\C^\ast, \sigma \in \hat{M}\}$ is called the (minimal) principal series of $G$. 

The compact picture of the principal series representations is obtained by restriction of the elements of $\Hi^\sigma_\lambda$ to $K$. Its representation space, which we denote by $\Hi^\sigma$, is the Hilbert completion of: 
\begin{equation*}
    \{f: K \rightarrow V_\sigma ~|~ f(km) = \sigma(m^{-1})f(k) ~\text{ for all }k \in K,~ m\in M \}
\end{equation*}
with respect to $L^2$ inner product. It is independent of $\lambda$. The action is given by:
\begin{equation*}
    \pi^\sigma_\lambda(g)f(k) := e^{-(i\lambda+\rho)\IWH(g^{-1}k)} f(\IWk(g^{-1}k))
\end{equation*} 
for all $g \in G$, $k \in K$ and $f\in \Hi^\sigma$.
The representation $\pi^\sigma_\lambda$ is unitary for $\lambda \in i\Aa^*$. In the following, when working with principal series, we actually work with their Harish-Chandra modules.
The restriction of $\pi^\sigma_\lambda$ to $K$ is the representation $\Ind_M^K\sigma$ of $K$ induced from $\sigma$. In particular, because of Frobenius reciprocity theorem, for any $\tau \in \hat{K}$: \begin{equation*}
    m(\pi^\sigma_\lambda| _K, \tau,)=m(\tau| _M, \sigma)\,.
\end{equation*}
Here the symbol $m(\alpha,\beta)$ denotes the multiplicity of the irreducible representation $\beta$ in the representation $\alpha$.

We say that $\tau$ is a $K$-type of $\pi^\sigma_\lambda$ if it occurs in $\pi^\sigma_\lambda| _K$. We say that $\tau$ is a minimal $K$-type of an admissible representation $\pi$ of $G$ if and only if its highest weight $\mu$ minimizes the Vogan norm
\begin{equation}\label{eq Vogan norm}
    \lVert \mu\rVert_V = \langle \mu +2\rho_\Ak, \mu +2\rho_\Ak \rangle
\end{equation}
in the set of $K$-types of $\pi$. \cite[Theorem 1]{VoganProcAcad} ensures that each minimal $K$-type $\tau_{\min{}}$ has multiplicity  one in $\pi^\sigma_\lambda$. Therefore there exists a unique irreducible subquotient $J(\sigma,\lambda,\mu)$ of $\pi^\sigma_\lambda$ containing $\tau_{\min{}}$. 

Let $P_\tau$ denote the projection of $\Hi^\sigma_\lambda$ onto its subspace of vectors which transform under $K$ according to $\tau$,
that is,
    \begin{equation}
    \label{Ptau}
P_\tau=d_\tau \int_K \pi^\sigma_{\lambda}(k)\gamma_\tau(k^{-1})\, dk\,.
    \end{equation}
The spherical function $\varphi_\tau^{\sigma,\lambda}$ is defined as the $\End(V_\tau)$-valued function on $G$ given by
    \begin{equation}
\varphi_\tau^{\sigma,\lambda}(x):=\varphi_\tau^{\pi^\sigma_{\lambda}}(x):=d_\tau \int_K \tau(k) \psi_\tau^{\sigma,\lambda}(xk^{-1})\, dk\,,  
    \end{equation}
where 
    \begin{equation}
    \label{psi-tausigma}
\psi_\tau^{\sigma,\lambda}(x)=\Tr\big(P_\tau \pi^\sigma_{\lambda}(x) P_\tau\big)\,.
    \end{equation}

\subsection{Homogeneous differential operators}\label{subsection hom diff op} A homogeneous differential operator $D$ on $E_\tau$ is a linear differential operator from $\Gamma^\infty(E_\tau)$ to itself which is invariant under the $G$-action $L$ by left translations, that is
\begin{equation}
    L(g)D = DL(g)\quad\text{ for all }g \in G~.
\end{equation}
The set of homogeneous differential operators on $E_\tau$ is an algebra with respect to composition. We denote it by $\D(E_\tau)$. 
It acts on $C^\infty(G, \tau)$ because of the isomorphism with the space smooth sections $\Gamma^\infty(E_\tau)$. Unlike in the scalar case, i.e. when $\tau$ is the trivial representation, this algebra need not be commutative. Conditions equivalent to the commutativity of $\D(E_\tau)$ are stated in \cite[Proposition 2.2]{Camp4} and \cite[Proposition 3.1]{Ricci}. In the rank one case, this algebra is always commutative when $G$ is $\Spin(n,1)$ or $\SU(n,1)$. See for instance \cite[Theorem 2.3]{Camp4}. The structure of $\D(E_\tau)$ can be found in \cite[Section 2.2]{Olb}.

Let $U(\g_\C)$ be the universal enveloping algebra of the complexification $\g_\C$ of $\g$. Each element of $U(\g_\C)$ induces a left-invariant differential operator on $G$ by:
\begin{equation}
    \big(X_1 \cdots X_k\cdot f\big)(g): = \frac{\partial}{\partial t_1}\frac{\partial}{\partial t_2} \cdots \frac{\partial}{\partial t_k} f (g \exp t_1X_1 \exp t_2X_2 \cdots \exp t_kX_k ) \Big| _{t_1=\ldots= t_k =0}
\end{equation}
for all $X = X_1 \cdots X_k \in U(\g_\C)$, $f \in C^\infty(G)$ and $g\in G$.\newline
Let $U(\g_\C)^K$ denote the subalgebra of the elements in $U(\g_\C)$ which are invariant under the adjoint action $\Ad$ of $K$. The elements of $U(\g_\C)^K$ act on on $C^\infty(G,\tau)$ as homogeneous differential operators. As $K$ is compact, Theorem 1.3 in \cite{Minemura} ensures that all elements of $\D(E_\tau)$ can be written as an element of $U(\g_\C)^K$. But there is no isomorphism in general. \newline
We can extend the action of $U(\g_\C)^K$ to the set of radial systems of section $C^\infty(G,K,\tau, \tau)$ by setting:
\begin{equation*}
    \big(D\cdot \phi \big) v := D\cdot (\phi \cdot v)
\end{equation*}
for all $D \in U(\g_\C)^K$, $\phi \in C^\infty(G,K, \tau, \tau)$ and $v\in V_\tau$.\newline
\subsection{The Laplace operator}\label{subsection Laplace} Let $\{X_1,\ldots, X_{\dim \g}\}$ be any basis of $\g$. We denote by $g^{ij}$ the $ij$-th coefficient of the inverse of the matrix $\big( B(X_i,X_j)\big)_{1\leq i,j\leq \dim \g}$, where $B$ is the Killing form. The Casimir operator is defined by
\begin{equation*}
\Omega := \sum_{1\leq i,j\leq \dim \g} g^{ij}X_jX_i ~.
\end{equation*}
If $\big(X_k\big)_{k=1,\ldots,\dim \Ak}$ and $\big(X_k\big)_{k=\dim \Ak + 1,\ldots,\dim \g}$ are respectively orthonormal basis of $\Ak$ and $\p$ with respect to $B_\theta$, then: 
\begin{equation*}
    \Omega = -\sum_{i=1}^{\dim \Ak} X_i^2 + \sum_{i=\dim \Ak+ 1}^{\dim \g} X_i^2~.
\end{equation*}
In fact, $\Omega$ is in the center of $U(\g_\C)$. The invariant differential operator corresponding $-\Omega$ is the positive Laplacian $\Delta$.

We can extend any representation of $\g$ to $\g_\C$ by linearity and to a representation of the associative algebra $U(\g_\C)$. These representations will always be denoted by the same symbol. 
Since $\Omega$ is in the center of $U(\g_\C)$, the linear operator $\pi^\sigma_\lambda(\Omega)$ is an interwining operator of the representation $\pi^\sigma_\lambda$ for all $\lambda \in \Aa^\ast_\C$ and $\sigma \in \hat{M}$. Lemma 4.1.8 in \cite{VoganRRRLG} ensures that $\pi^\sigma_\lambda(\Omega)$ acts by a scalar. 
To compute this scalar, one can use \cite[Proposition 8.22 and Lemma 12.28]{Kna1}, and get that:
    \begin{equation}\label{Multope}
    \pi^\sigma_\lambda(\Omega) = \Big(-\langle\lambda,\lambda\rangle - \langle\rho,\rho\rangle + \langle\mu_\sigma +\rho_\m,\mu_\sigma+\rho_\m\rangle\Big) \Id =: -M(\sigma,\lambda)\Id~.
    \end{equation}
Here $\mu_\sigma$ is the highest weight of $\sigma$.

\section{The residue representations}

\label{section residue repr}
In this section, we recall the results of \cite[Section 4]{ROBY2021}. With each resonance of the Laplace operator on homogeneous vector bundle over rank one symmetric space, is associated a representation, called a residue representation, which we are going to describe.

We suppose that the highest weight of $\tau\in \hat{K}$ is known. Then using \cite{Bald1}, one can find the $M$-types of $\tau$. We denote by $\hat{M}(\tau)$ their set and by $\#\hat{M}(\tau)$ the cardinality of $\hat{M}(\tau)$. Let us indicate the elements of $\hat{M}(\tau)$ by $\sigma_j$, with $j=1,\ldots,\#\hat{M}(\tau)$, and let $\mu_{\sigma_j}$ be the highest weight of $\sigma_j$.
One can use now \cite{Mia} to find the poles of the Plancherel density $p_{\sigma_j}$ associated with $\sigma_j$. These poles can be either in the sets $i\Z$ or $i(\Z+\frac12)$, can be indexed by $\Z$ and their set is symmetric with respect to 0. To each pole  $\pm \lambda_k^{\sigma_j}$, $k\in\Z_+$, of $p_{\sigma_j}$ (we choose $\lambda_k^{\sigma_j}$ to be negative without loss of generality), i.e. to each resonance arising from the restriction of $\Delta$ to $\bigcup\limits_{\lambda\in\Aa^*} \Hi_\lambda^{\sigma_j}$, we attach a so-called \emph{residue representation}. 
Before describing these representations, let us recall first the theorem which gives the resonances of the Laplace operator $\Delta$ on the homogeneous vector bundle $E_\tau$. The resonances are the poles of the meromorphic continuation of the resolvent of $\Delta$, defined for every $z\in \C\setminus\R$ by
\begin{equation}
    R(z) = (\Delta-z)^{-1}~,
\end{equation}
once $R(z)$ is considered as a non-selfadjoint operator on the space $C_c^\infty(G,\tau)$ of smooth compactly supported sections of $E_\tau$. 
we decompose $R$ as follows:
\begin{equation}
\label{Rtau}
    R=\sum_{\sigma\in \hat{M}(\tau)} d_\sigma R_\sigma\,,
\end{equation}
where 
\begin{equation*}R_\sigma(z) := \int_{\Aa^*}  (M(\sigma,\lambda) - z)^{-1} \Big(\varphi_\tau^{\sigma,\lambda} \ast f  \Big)(x) ~ p_\sigma(\lambda) ~ d\lambda \end{equation*}

\begin{Thm}[Theorem 1 in \cite{ROBY2021}]
\label{Thm mer ext}
Let
\begin{equation}\label{zetasigma}
    \zeta_\sigma := \sqrt{z-\langle\rho,\rho\rangle + \langle\mu_\sigma +\rho_M,\mu_\sigma+\rho_M\rangle)}
\end{equation}
Here $\sqrt{\cdot}$ denotes the single-valued branch of the square root function determined on $\C\setminus[0,+\infty[$ by the condition $\sqrt{-1} = -i$.

Let 
$$S=\left\{(z,\zeta) \in \C^2~|~\zeta^2 := z-\langle\rho,\rho\rangle + \langle\mu_\sigma +\rho_M,\mu_\sigma+\rho_M\rangle\right\}~.$$
Then the restriction of the resolvent of the $\Delta$ to $C_c^\infty(G,\tau)$ extends meromorphically from $S^+ = \{(z,\zeta) \in S~|~\Im(\zeta)>0\}$ to $S$, which holds by the following formula, up to constants and for every $N\in \Z_+$,

\begin{multline}\label{eq meromorphiccontinuation}
\left(R_{\sigma_j}(\zeta_{\sigma_j})f\right)(x) =
\int_{\R-i(N+1/4)}\frac{1}{\lambda|\alpha|-\zeta_{\sigma_j}} \Big(\varphi_\tau^{\sigma_j,\lambda\alpha} \ast f  \Big)(x) ~ \frac{p_{\sigma_j}(\lambda\alpha)}{\lambda} ~ d\lambda \\
+~~~
\sum_{\substack{k\in \Z_+ \\ 0>\Im(\lambda^{\sigma_j}_k) > -N+1/4}} \frac{1}{\lambda^{\sigma_j}_k|\alpha|-\zeta_{\sigma_j}} \Big(\varphi_\tau^{\sigma_j,\lambda^{\sigma_j}_k\alpha} \ast f \Big)(x) ~ \Res_{\lambda=\lambda^{\sigma_j}_k}\frac{p_{\sigma_j}(\lambda\alpha)}{\lambda}~.
\end{multline}

The resonances of $\Delta$ acting on $C_c^\infty(G,\tau)$ are the (simple) poles of this extension and are given by the pairs
\begin{equation}
    (z_{\sigma_j,k}, \zeta_{\sigma_j,k}) = \big({(\lambda_k^{\sigma_j}|\alpha|)}^2 +\langle\rho,\rho\rangle - \langle\mu_{\sigma_j} +\rho_\m,\mu_{\sigma_j}+\rho_\m\rangle~, \lambda_k^{\sigma_j}|\alpha|~\big)
\end{equation}
where $k\in \Z_+$.
\end{Thm}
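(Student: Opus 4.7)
The plan is to start from the formula for $R_\sigma(z)$ on the physical sheet $S^+$, rewrite the $\Aa^\ast$-integral as a one-variable contour integral in $\lambda \in \R$, and then deform the contour into the lower half-plane. Parametrizing $\Aa^\ast$ by $\R$ via $\lambda \mapsto \lambda\alpha$, the formulas \eqref{Multope} and \eqref{zetasigma} give
\[ M(\sigma,\lambda\alpha)-z \;=\; \langle\lambda\alpha,\lambda\alpha\rangle - \zeta_\sigma^2 \;=\; (\lambda|\alpha|-\zeta_\sigma)(\lambda|\alpha|+\zeta_\sigma), \]
so the denominator factorises over the two (anti)Weyl-conjugate points $\pm\zeta_\sigma/|\alpha|$.

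I would then exploit the rank-one Weyl symmetry $\lambda \mapsto -\lambda$. The Plancherel density $p_\sigma(\lambda\alpha)$ and the $\End(V_\tau)$-valued spherical function $\varphi_\tau^{\sigma,\lambda\alpha}$ each transform in a simple way under this involution, coming from the standard intertwining operator between $\pi^\sigma_{\lambda}$ and $\pi^\sigma_{-\lambda}$ restricted to the $\tau$-isotypic subspace. Combining these symmetries with the partial-fraction identity
\[ \frac{1}{(\lambda|\alpha|-\zeta_\sigma)(\lambda|\alpha|+\zeta_\sigma)} = \frac{1}{2\zeta_\sigma}\left(\frac{1}{\lambda|\alpha|-\zeta_\sigma} - \frac{1}{\lambda|\alpha|+\zeta_\sigma}\right) \]
and changing variable $\lambda \mapsto -\lambda$ in the second piece collapses the integrand to one against the single linear factor $1/(\lambda|\alpha|-\zeta_\sigma)$, with the remaining weight appearing as $p_\sigma(\lambda\alpha)/\lambda$ times $(\varphi_\tau^{\sigma,\lambda\alpha}\ast f)(x)$, up to constants. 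This produces the line-integral formula in \eqref{eq meromorphiccontinuation} with contour $\R$, valid whenever $(z,\zeta_\sigma)\in S^+$.

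Next I would shift the contour from $\R$ to $\R-i(N+\tfrac{1}{4})$. By \cite{Mia} the poles of $p_\sigma(\lambda\alpha)$ lie in $i\Z$ or $i(\Z+\tfrac{1}{2})$ and are simple, so the shifted contour avoids them; and since $f\in C_c^\infty(G,\tau)$, Paley--Wiener-type estimates give sufficient decay for $\varphi_\tau^{\sigma,\lambda\alpha}\ast f$ on horizontal strips, while $p_\sigma$ grows only polynomially there, so the vertical boundary contributions vanish. The residue theorem then produces exactly the right-hand side of \eqref{eq meromorphiccontinuation}: the line integral along $\R-i(N+\tfrac{1}{4})$ plus the finite sum of residues of $p_\sigma(\lambda\alpha)/\lambda$ at the poles $\lambda_k^\sigma$ lying in the strip $-N+\tfrac{1}{4}<\Im\lambda<0$.

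For each fixed $N$ the resulting expression is manifestly meromorphic in $\zeta_\sigma$ on the half-plane $\Im\zeta_\sigma>-N+\tfrac{1}{4}$, with simple poles precisely at $\zeta_\sigma=\lambda_k^\sigma|\alpha|$; letting $N\to\infty$ yields meromorphic extension of $R_\sigma$ to all of $S$, and summing over $\sigma\in\hat M(\tau)$ in \eqref{Rtau} gives the corresponding statement for $R$. The pole locations, translated back through \eqref{zetasigma}, are exactly the pairs $(z_{\sigma_j,k},\zeta_{\sigma_j,k})$ of the statement. The main obstacle is the symmetry-plus-partial-fractions reduction: carrying it out pointwise on the $\End(V_\tau)$-valued $\varphi_\tau^{\sigma,\lambda\alpha}$ requires checking that the intertwining isomorphism $\pi^\sigma_\lambda\simeq\pi^\sigma_{-\lambda}$ acts trivially (up to a known sign) on the $\tau$-isotypic component, a fact which in rank one also underlies the simplicity of the resulting poles. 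The contour shift and residue bookkeeping are technical but standard once these decay and symmetry inputs are in hand.
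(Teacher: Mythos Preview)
The paper does not give its own proof of this statement: it is quoted as Theorem~1 of \cite{ROBY2021} and serves purely as background for the later sections. There is therefore nothing in the present paper to compare your argument against.

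That said, your outline is the standard contour-shift proof one finds in \cite{ROBY2021} and in the earlier scalar-case treatments (\cite{HilgPasq}, \cite{MiaWill}): factor the denominator via \eqref{Multope} and \eqref{zetasigma}, use the rank-one Weyl symmetry together with partial fractions to reduce to a single linear pole in $\zeta_\sigma$, then push the contour down and collect residues using Paley--Wiener decay for $\varphi_\tau^{\sigma,\lambda\alpha}\ast f$. The one point to be slightly more careful about than you indicate is the symmetry step: in the bundle setting the nontrivial Weyl element $w$ sends $\pi^\sigma_\lambda$ to $\pi^{w\sigma}_{-\lambda}$, and $w\sigma$ need not be equivalent to $\sigma$ (this happens already for $\SU(n,1)$). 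The reduction therefore works at the level of the full sum \eqref{Rtau} over $\hat M(\tau)$, pairing $\sigma$ with $w\sigma$, rather than for a single $R_\sigma$ in isolation; once this pairing is made, the parity of the Plancherel density mentioned in the Remark following the theorem does the rest.
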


\begin{Rem}
In Theorem \ref{Thm mer ext}, we (choose to) extend the holomorphic part of the resolvent on $S^+$. This is why only the negative poles of the Plancherel density can become poles of the meromorphic extension of $R$. Of course one can choose to extend the holomorphic part of the resolvent on
$S^- = \{(z,\zeta) \in S~|~\Im(\zeta)<0\}.$
This is where the parity of the Plancherel density is really important. Its positive poles are just the opposite of the negative poles. The two extensions of $R$ appear thus to be equivalent. 
\end{Rem}

One can see that for each negative pole of the Plancherel densities, we get a representation, which come from the left action of $G$ on the residue in \eqref{eq meromorphiccontinuation}. The representation space is
\begin{equation} \label{eq residue representation}
    \E_k^{\sigma_i} := \{\varphi_{\tau}^{\sigma_i,\lambda_k^{\sigma_i}} \ast f ~|~ f \in  C_c^\infty(G,\tau)\}~.
\end{equation}
The left action of $G$ on $\E_k^{\sigma_i}$ is called the \emph{residue representation} arising from the pole $\lambda_k^{\sigma_i}$ of the Plancherel density $p_{\sigma_i}$. 
This is exactly the image of the \emph{residue operator} $\RR_k^{\sigma_i}$ which is defined as follows. 
\begin{equation}
    \fonction{\RR^{\sigma_i}_k}{C_c^\infty(G,\tau)}{C^\infty(G,\tau)}{f}{\varphi_{\tau}^{\sigma_i,\lambda^i_k\alpha} \ast f}~.
\end{equation} 

To identify the residue representation among the representations of $G$, the idea is to embed $\E_k^{\sigma_i}$ in a principal series representation as follows. Let $T_j^{i,k}$ be the map defined, for each $j=1,...,m(\sigma_i,\tau)$, by
\begin{equation}
    \fonction{T_j^{i,k}}{C_c^\infty(G,\tau)}{\Hi^{\sigma_i}_k}{f}{\left[x\mapsto \int_G \pi^{\sigma_i}_k(g)\big(P_j^\ast f(g)\big)(x) ~dg\right]~.}
\end{equation}
Here $P_j$ is the projection onto the $j$-th $\tau$-isotypic component in $\Hi^{\sigma_i}_k$ and $\ast$ denotes the Hermitian adjoint. There are $m(\sigma_i,\tau)$ maps $T_j^{i,k}$, for $i$ and $k$ fixed.
The image of $T_j^{i,k}$ is the closure of the space spanned by the left translates of $P_j^*V_{\tau}$ (see \cite[Lemma 4.1]{ROBY2021}). 
This map allows us to decompose the residue operators $\RR^{\sigma_i}_k$ as the composition of $m(\sigma_i,\tau) +1$ operators:
\begin{equation}
    \begin{array}{cll}
        \RR^{\sigma_i}_k : C_c^\infty(G,\tau) & \rightarrow ~~~~~~~~\Hi^{\sigma_i}_k &\rightarrow  \E_k^{\sigma_i}\\
         ~~f                  & \mapsto ~~~~~~~~T_j^{i,k}(f)                     &\mapsto  \sum_j P_j ~\pi^{\sigma_i}_k((\cdot)^{-1})\big(T_j^{i,k}(f)\big)
    \end{array}~.
\end{equation}

In this way, we can use the structure of the principal series representations $\Hi^{\sigma_i}_k$ to study the residue representations $\E_k^{\sigma_i}$. 

\section{The structure of the principal series representations of a real rank-one classical Lie group}
\label{section about Col}
In this section, we collect some results and notations we need to understand the principal series representations of a real rank one classical Lie group. Our main reference is the book \cite{CollinRROLG}.

\subsection{Some notations}

Each infinitesimal character $\chi$ will be indicated by $\chi_\gamma$ where $\gamma\in\h_\C^*$. In fact, if we denote by $HC$ the Harish-Chandra isomorphism, then every character $\chi$ of $\Zentrum(\g)$ has the form
$$\chi(Z) = \chi_\gamma(Z) = HC(Z)(\gamma)~$$
for some $\gamma \in \h_\C^*$. Moreover $\chi_\gamma =\chi_{\gamma'}$ if and only if $\gamma'$ and $\gamma$ are in the same $W_\C$-orbit in $\h_\C^*$. Here $W_\C$ denotes the Weyl group of $(\g_\C,\h_\C)$.

We denote by $\HC$ the category of Harish-Chandra modules. Moreover, we indicate by $\HC(\lambda)$ the subcategory of Harish-Chandra modules with generalized infinitesimal character $\chi_\lambda$. Here, by “generalized infinitesimal character $\chi_\lambda$”, we mean that if $V\in\HC(\lambda)$, then for every $Z\in \Z(\g)$, $Z-\chi_\gamma(Z)$ acts nilpotently. For every $\gamma\in \h^*$ regular, we define the projection functor $p_\gamma$ from $\HC$ to $\HC(\lambda)$ as follows. For every $V\in\HC $, $p_\gamma V$ is the maximal subspace in $V$ such that for every $Z\in \Z(\g)$, $Z-\chi_\gamma(Z)$ acts nilpotently. 

Let $\Lambda$ be the lattice of weights of the finite dimensional representations of $G$. For each $\mu\in \Lambda$, we denote by $F^\mu$ the irreducible finite dimensional $G$-module of highest weight $\mu$ and set $F_{-\mu} := F^*_\mu$ for the irreducible finite dimensional one of lowest weight $-\mu$. Then for every $\lambda$ in the positive Weyl chamber, we can define the two functors $\Phi_{\lambda+\mu}^\lambda$ and $\Psi^{\lambda+\mu}_\lambda$ by
\begin{align}\label{eq Def up/down functors}
    \Phi_{\lambda+\mu}^\lambda &:= p_{\lambda+\mu} \circ [(...)\otimes F^\mu]\circ p_\lambda\\
    \Psi^{\lambda+\mu}_\lambda &:= p_\lambda\circ [(...)\otimes F_{-\mu}]\circ p_{\lambda+\mu}
\end{align}

Let $\h$ be a split or a compact Cartan subalgebra of $\g$. By \emph{regular character} of $H = \exp\h$ (see \cite[paragraph 6, page 409]{VoganRRRLG} or \cite[page 48]{CollinRROLG}), we define a pair $(\Gamma,\gamma)$, verifying 
\begin{enumerate}
    \item $\Gamma$ is an irreducible character of $H$,
    \item $\gamma$ is a regular element of $\h^*_\C$.
\end{enumerate}
If $\h$ is a compact Cartan subalgebra, we require additionally that $d\Gamma = \gamma+\rho-2\rho_\Ak$. If $\h$ is the split Cartan subalgebra, we ask that $d\Gamma = \gamma-\rho_\m$. 

For each regular character $(\Gamma,\gamma)$ on the split (respectively compact) Cartan subalgebra, we denote by $\pi(\gamma)$ the equivalence class of the principal (respectively discrete) series representations with regular character $(\Gamma,\gamma)$. Each $\pi(\gamma)$ is a standard module. One can prove (see \cite[2.1.10 and 2.1.11]{CollinRROLG}) that each standard module $\pi(\gamma)$ admits a unique irreducible quotient module, which we will denote by $\overline{\pi}(\gamma)$.


For a fixed regular character $(\Gamma,\gamma)$, we indicate by $\Pi_\gamma^+$ the set of positive roots for which $\gamma$ is dominant and by $\rho_\gamma$ the half sum of the positive roots with respect to $\Pi_\gamma^+$.




\subsection{Some useful theorems}
\label{section Col's Thm used}

The first important result allows us to reduce the study of every principal series representation to the case of the trivial infinitesimal character. 

\begin{Thm}[Theorem 4.3.1 in \cite{CollinRROLG}]\label{Thm Col: red inf ch}
Let $\gamma\in \h_\C$ be strictly dominant and let $\mu$ be a highest weight with respect to $\Pi_\gamma$ such that $\gamma-\mu = \rho_\gamma$. 
Then $\Psi$ sets up a bijective correspondence between composition factors of $\pi(\gamma)$ and $\pi(\rho_\gamma)$. 
\end{Thm}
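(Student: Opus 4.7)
The plan is to apply Zuckerman's translation principle as developed for Harish-Chandra modules in \cite{VoganRRRLG} and specialised in \cite{CollinRROLG}. Both $\gamma$ and $\rho_\gamma$ are regular (the former by strict dominance, the latter because it is the half-sum of the positive roots in the system $\Pi_\gamma^+$), so one expects $\Psi^{\gamma}_{\rho_\gamma}$ to implement an equivalence between $\HC(\gamma)$ and $\HC(\rho_\gamma)$; the composition-factor correspondence is then an immediate consequence of this equivalence applied to $\pi(\gamma)$.

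First I would establish exactness of $\Psi$. Tensoring with the finite-dimensional module $F_{-\mu}$ is exact on $\HC$, and each projection $p_\nu$ onto a generalised-infinitesimal-character block is a direct summand of the identity and therefore also exact. The composition $p_{\rho_\gamma}\circ[(\cdot)\otimes F_{-\mu}]\circ p_\gamma$ is thus exact.

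The heart of the argument is the identification $\Psi(\pi(\gamma))\cong\pi(\rho_\gamma)$. I would decompose the character of $\pi(\gamma)\otimes F_{-\mu}$ according to generalised infinitesimal characters: only those weights $\nu$ of $F_{-\mu}$ for which $\gamma+\nu$ lies in the $W_\C$-orbit of $\rho_\gamma$ contribute to $p_{\rho_\gamma}$. The hypothesis $\gamma-\mu=\rho_\gamma$ together with strict dominance of $\gamma$ in $\Pi_\gamma^+$ forces $\nu=-\mu$ (the lowest weight of $F_{-\mu}$) to be the unique such contribution, via the classical Kostant--Jantzen lemma: for $\gamma$ regular dominant and $\lambda$ dominant, an element of the form $\gamma+\nu$ with $\nu$ a weight of $F^\lambda$ lies in $W_\C\cdot(\gamma+\lambda)$ only when $\nu$ is an extremal weight. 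A Weyl character formula calculation then matches the resulting character with that of $\pi(\rho_\gamma)$, and uniqueness of standard modules with a prescribed character in a regular block yields the claimed isomorphism.

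With these ingredients in place, exactness of $\Psi$ turns a Jordan--H\"older filtration of $\pi(\gamma)$ into a filtration of $\pi(\rho_\gamma)$ whose subquotients are the $\Psi$-images of the irreducible composition factors of $\pi(\gamma)$. To upgrade this to a genuine bijection I would invoke the companion up-functor $\Phi^{\gamma}_{\rho_\gamma}$: since $\gamma$ and $\rho_\gamma$ lie in the same open Weyl chamber (no wall is crossed), the adjoint pair $(\Phi,\Psi)$ satisfies $\Phi\circ\Psi\cong\Id$ on $\HC(\gamma)$, which prevents any composition factor from being collapsed to zero and forces each $\Psi$-image to remain irreducible. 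I expect the main technical obstacle to be the character computation above: precisely identifying which weight contributions of $F_{-\mu}$ survive $p_{\rho_\gamma}$. This is exactly the point at which the assumption that $\mu$ be a highest weight relative to $\Pi_\gamma^+$ and the balance $\gamma=\mu+\rho_\gamma$ are used in an essential way.
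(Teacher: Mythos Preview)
The paper does not prove this theorem at all; it is quoted verbatim as Theorem~4.3.1 of \cite{CollinRROLG} and used as a black box in the algorithm of Section~\ref{section Algo}. There is therefore no ``paper's own proof'' to compare your proposal against.

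That said, your outline is the standard translation-principle argument (exactness of tensoring and of the block projections; identification $\Psi(\pi(\gamma))\cong\pi(\rho_\gamma)$ via the extremal-weight lemma; and the equivalence $\Phi\circ\Psi\cong\Id$, $\Psi\circ\Phi\cong\Id$ between regular blocks lying in the same Weyl chamber), which is precisely how the result is established in \cite{VoganRRRLG} and \cite{CollinRROLG}. One small point: to conclude that $\Psi$ is a \emph{bijection} on composition factors you need both $\Phi\circ\Psi\cong\Id$ and $\Psi\circ\Phi\cong\Id$ (equivalence of categories), not just one of them; otherwise you only get injectivity in one direction. With that adjustment your sketch matches the proof in the cited references.
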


The next theorems describe the regular characters of trivial infinitesimal character for the three rank-one classical Lie groups as well as the decomposition of their corresponding principal series representations and their Gelfand-Kirillov dimension. 
The composition series is described by superposition of boxes following the following rules :
\begin{enumerate}
    \item Each box realizes a subquotient of the principal series representations,
    \item the box at the top is the (unique) maximal quotient of the principal series representation, 
    \item the $U(\g_\C)$-module generated by a box is the composition series of every box which is above it. 
\end{enumerate}
In the following theorems, let $\pi_{ij}:= \pi(\gamma_{ij})$. Recall that the $e_j$ are the usual dual basis of that of the compact Cartan subalgebra.

\begin{Thm}[for $G=\Spin(2n,1)$, see page 81, page 208 and Theorem 5.2.4]\label{Thm Col: Spin}
Trivial regular characters for the principal series representations, for $1\leq i \leq n$:
\begin{equation}
    2\gamma_{0i} := (2n-2i+1) \varepsilon_1 + \sum_{j=2}^i (2n-2j+3)\varepsilon_j + \sum_{j=i+1}^n (2n-2j+1)\varepsilon_j 
\end{equation}
Trivial regular characters for the discrete series representations: 
\begin{equation}
    2\gamma_0 := \sum_{j=1}^n (2n-2j+1)e_j ~~~~~\text{and}~~~~~ 2\gamma_1 := \sum_{j=1}^{n-1} (2n-2j+1)e_j -e_n
\end{equation}
The corresponding principal series decompositions, for $1\leq i \leq n-1$:
\begin{center}
\begin{tikzpicture}[scale = 0.7]
\draw (-1,0) node  {$\pi_{0,i} =$};
\draw [] (0,0) rectangle (2,1);
\draw [] (0,0) rectangle (2,-1);
\draw [] (0.2,0) to (0.2,-1);
\draw (1,0.5) node  {$\overline{\pi}_{0,i}$};
\draw (1.2,-0.5) node  {$\overline{\pi}_{0,i+1}$};

\begin{scope}[xshift=7 cm] 
\draw (-1,0) node  {$\pi_{0,n} =$};
\draw [] (0,0) rectangle (4,1);
\draw [] (0,0) rectangle (4,-1);
\draw [] (1.7,0) to (1.7,-1);
\draw [] (2.3,0) to (2.3,-1);
\draw (2,0.5) node  {$\overline{\pi}_{0,n}$};
\draw (0.8,-0.5) node  {$\overline{\pi}_{0}$};
\draw (3.2,-0.5) node  {$\overline{\pi}_{1}$};
\draw (2,-0.5) node  {$\oplus$};
\end{scope}
\end{tikzpicture}
\end{center}
where
$\overline{\pi}_{0,1}$ is finite dimensional and $\overline{\pi}_{0,i}$ for $2\leq i\leq n$, ${\pi}_{0}$ and ${\pi}_{1}$ have Gelfand-Kirillov dimension $2n-1$. 
\end{Thm}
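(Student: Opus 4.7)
The plan is to verify each of the four assertions separately, following the approach of \cite{CollinRROLG}. I would first enumerate the trivial regular characters. By Harish-Chandra's theorem, a character $\chi_\gamma$ is trivial precisely when $\gamma$ lies in the $W_\C$-orbit of $\rho$. On the split Cartan $\h = \At\oplus\Aa$ of $\Spin(2n,1)$, a regular character $(\Gamma,\gamma)$ additionally requires $d\Gamma = \gamma-\rho_\m$ to be an integral weight of the split Cartan group, and two such pairs yield equivalent principal series precisely when they are conjugate under the subgroup of $W_\C$ that acts trivially on $\Aa$, namely the Weyl group of $(\m_\C,\At_\C)$. Counting cosets of $W_\m$ in $W_\C$ gives exactly $n$ equivalence classes, and writing down representatives that are dominant for $\Pi^+_\m$ recovers the displayed formulas for $2\gamma_{0i}$. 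A parallel enumeration on the compact Cartan, now using the condition $d\Gamma = \gamma+\rho-2\rho_{\Ak}$, produces the two Harish-Chandra parameters $\gamma_0$ and $\gamma_1$, distinguished by the sign of the last coordinate and corresponding to the two non-$W_K$-conjugate chambers adjacent to the wall defined by $\varepsilon_n$.

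Next I would determine the composition series of each $\pi_{0,i}$. The main tool is the translation principle of Theorem \ref{Thm Col: red inf ch}, which transports the problem to the trivial infinitesimal character, together with the standard long intertwining operator $A(\gamma_{0,i}):\pi_{0,i}\to\pi_{0,i}^{w_0}$. At a generic parameter $A$ is an isomorphism; at the singular points $\gamma_{0,i}$ the Gindikin-Karpelevic factorization produces a simple pole whose residue operator has rank one. Applying this for $i=1,\dots,n-1$ yields the non-split short exact sequence
\[
0 \longrightarrow \overline{\pi}_{0,i+1} \longrightarrow \pi_{0,i} \longrightarrow \overline{\pi}_{0,i} \longrightarrow 0,
\]
which is exactly the two-box diagram. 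The boundary case $i=n$ is genuinely different: $\gamma_{0,n}$ sits on a wall where the long intertwiner factors through a \emph{direct} sum of two limits of discrete series, giving the three-box picture $\overline{\pi}_{0,n}$ on top of $\overline{\pi}_0\oplus\overline{\pi}_1$. The finiteness of $\overline{\pi}_{0,1}$ would then be identified by matching its Langlands data with the Borel-Weil realization of the trivial representation $F^0$.

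Finally I would compute the Gelfand-Kirillov dimensions. The module $\overline{\pi}_{0,1}$ is finite-dimensional, hence has GK-dimension zero. For each of the remaining irreducible subquotients, the associated variety is the closure of a nonzero nilpotent $K_\C$-orbit in $(\g/\Ak)_\C^*$; since $\Spin(2n,1)$ has real rank one, the only such proper closed orbit other than $\{0\}$ is the minimal one, of complex dimension $\dim\n = 2n-1$, so the GK-dimension is forced to be $2n-1$. The main obstacle I expect is the orientation of the box diagrams: one must show that $\overline{\pi}_{0,i+1}$ appears as a subrepresentation and not as a quotient, and that the splitting at $\pi_{0,n}$ is genuine rather than a non-trivial extension. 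Both points can be settled by running Collingwood's Jantzen filtration calculation and cross-checking against the $K$-type content predicted by Frobenius reciprocity, $m(\pi^\sigma_\lambda|_K,\tau) = m(\tau|_M,\sigma)$, which pins down the $K$-spectrum of every subquotient uniquely.
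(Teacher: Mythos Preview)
The paper does not prove this theorem at all: it is stated as a result quoted verbatim from Collingwood's book \cite{CollinRROLG}, with explicit references (page 81, page 208, and Theorem 5.2.4 there). In the paper's structure, Section~\ref{section about Col} merely \emph{collects} known facts about the composition series of rank-one principal series so that they can be used as input for the algorithm in Section~\ref{section Algo}. So there is nothing to compare your argument against.

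That said, as a sketch of how one would actually establish the result, your outline follows the standard route taken in \cite{CollinRROLG}: enumerate regular characters at the trivial infinitesimal character via $W_\m$-cosets in $W_\C$, use translation functors and the long intertwining operator to obtain the composition factors, and read off GK dimensions from the nilpotent orbit picture. A couple of points are imprecise. The phrase ``residue operator has rank one'' is not what happens: the residue of the intertwining operator at a reducibility point is an operator of infinite rank whose \emph{image} (or kernel) is the proper subrepresentation, not a rank-one map. Also, your argument that the GK dimension is forced to be $2n-1$ because ``the only such proper closed orbit other than $\{0\}$ is the minimal one'' needs a bit more care: you must know that the associated variety of each infinite-dimensional subquotient is nonzero, which follows from infinite-dimensionality, and that it is contained in the nilpotent cone of $\p_\C^*$, which is standard. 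With those caveats your plan is sound, but the paper itself simply cites Collingwood and moves on.
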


\begin{Thm}[for $G=\SU(n,1)$, see page 83, page 208 and Theorem 5.3.1]\label{Thm Col: SU}
Trivial regular characters for the principal series representations, for $0\leq i \leq n-1$ and $1\leq j \leq n-i$:
\begin{equation}
    2\gamma_{ij} := (n-2i) \varepsilon_1 + \sum_{l=2}^{i+1} (n-2l+4) \varepsilon_l + \sum_{l=i+2}^{n-j+1} (n-2l+2) \varepsilon_l + \sum_{l=n-j+2}^{n} (n-2l) \varepsilon_l + (n-2(n-j+2)+2) \varepsilon_{n+1}
\end{equation}
Trivial regular characters for the discrete series representations, for $0\leq i \leq n$: 
\begin{equation}
    2\gamma_i := \sum_{l=1}^i (n-2l+2)e_l + \sum_{l=i+1}^n (n-2l)e_l + (n-2i) e_{n+1} 
\end{equation}
The corresponding principal series decompositions:\vspace{0.5cm}

\begin{minipage}{0.33\textwidth}
\begin{center}
{\footnotesize For $0\leq i\leq n-1$,}
\vspace{0.2cm}

\begin{tikzpicture}[scale = 0.7]
\draw (-1,0) node  {$\pi_{i,n-i} =$};
\draw [] (0,0) rectangle (4,1);
\draw [] (0,0) rectangle (4,-1);
\draw [] (1.7,0) to (1.7,-1);
\draw [] (2.3,0) to (2.3,-1);
\draw (2,0.5) node  {$\overline{\pi}_{i,n-i}$};
\draw (0.8,-0.5) node  {$\overline{\pi}_{i}$};
\draw (3.2,-0.5) node  {$\overline{\pi}_{i+1}$};
\draw (2,-0.5) node  {$\oplus$};
\end{tikzpicture}
\end{center}
\end{minipage}
\begin{minipage}{0.33\textwidth}
\begin{center}
{\footnotesize For $0\leq i\leq n-2$, \\$1\leq j\leq n-i-2$,}
\vspace{0.2cm}

\begin{tikzpicture}[scale = 0.7]
\draw (-1,-0.5) node  {$\pi_{i,j} =$};
\draw [] (0,0) rectangle (4,1);
\draw [] (0,0) rectangle (4,-1);
\draw [] (1.7,0) to (1.7,-1);
\draw [] (2.3,0) to (2.3,-1);
\draw [] (0,-1) rectangle (4,-2);
\draw (2,0.5) node  {$\overline{\pi}_{i,j}$};
\draw (0.8,-0.5) node  {$\overline{\pi}_{i+1,j}$};
\draw (3.2,-0.5) node  {$\overline{\pi}_{i,j+1}$};
\draw (2,-0.5) node  {$\oplus$};
\draw (2,-1.5) node  {$\overline{\pi}_{i+1,j+1}$};
\end{tikzpicture}
\end{center}
\end{minipage}
\begin{minipage}{0.33\textwidth}
\begin{center}
{\footnotesize For $0\leq i\leq n-2$,\\ $j = n-i-1$,} 
\vspace{0.2cm}

\begin{tikzpicture}[scale = 0.7]
\draw (-1,-0.5) node  {$\pi_{i,j} =$};
\draw [] (0,0) rectangle (4,1);
\draw [] (0,0) rectangle (4,-1);
\draw [] (1.7,0) to (1.7,-1);
\draw [] (2.3,0) to (2.3,-1);
\draw [] (0,-1) rectangle (4,-2);
\draw (2,0.5) node  {$\overline{\pi}_{i,j}$};
\draw (0.8,-0.5) node  {$\overline{\pi}_{i+1,j}$};
\draw (3.2,-0.5) node  {$\overline{\pi}_{i,j+1}$};
\draw (2,-0.5) node  {$\oplus$};
\draw (2,-1.5) node  {$\overline{\pi}_{i+1}$};
\end{tikzpicture}
\end{center}
\end{minipage}

\vspace{2mm}
where $\overline{\pi}_{0,1}$ is finite dimensional. 
\newline $\overline{\pi}_{0,j}$ for $2\leq j\leq n$, $\overline{\pi}_{i,1}$ for $1\leq i\leq n-1$, ${\pi}_{0}$ and ${\pi}_{n}$ have Gelfand-Kirillov dimension $n$. 
\newline $\overline{\pi}_{i,j}$ for  $1\leq i\leq n-1$ and $2\leq j\leq n$, and ${\pi}_{l}$ for $1\leq l\leq n-1$ have Gelfand-Kirillov dimension $2n-1$. 
\end{Thm}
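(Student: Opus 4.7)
The plan is to follow the strategy of Collingwood's book in three stages, treating the three assertions (the list of trivial regular characters, the box-diagram decomposition of each $\pi_{ij}$, and the Gelfand-Kirillov dimensions) in order.

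First, I would identify the trivial regular characters. A regular character $(\Gamma,\gamma)$ on the split Cartan subalgebra has trivial infinitesimal character exactly when $\gamma$ lies in the $W_\C$-orbit of $\rho$. For $\g_\C = \sL(n+1,\C)$ this orbit consists of all permutations of the entries of $\rho = \tfrac{1}{2}\sum_{l=1}^{n+1}(n-2l+2)\varepsilon_l$, and the condition $d\Gamma = \gamma-\rho_\m$ forces $\gamma$ to be integral with respect to $\m$. A direct bookkeeping check shows that the family $\{\gamma_{ij}\}_{0\leq i\leq n-1,\,1\leq j\leq n-i}$ listed in the statement is exactly the set of orbit representatives compatible with this integrality and with the chosen positive system. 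The discrete series characters $\gamma_i$ are obtained analogously from the compact Cartan subalgebra, with the shift $d\Gamma=\gamma+\rho-2\rho_\Ak$ accounting for the extra displacement of the last coordinate.

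Second, I would produce the composition series. The key tool is the translation principle, together with Theorem \ref{Thm Col: red inf ch}, which reduces every regular infinitesimal character to the trivial one. Starting from the finite-dimensional constituent $\overline{\pi}_{0,1}$ (which appears as the unique irreducible quotient at the ``corner'' parameter), I would apply the wall-crossing functors $\Phi_{\lambda+\mu}^{\lambda}$ and $\Psi^{\lambda+\mu}_{\lambda}$ defined in \eqref{eq Def up/down functors} to propagate across the chamber walls. Each wall-crossing produces either a new principal-series irreducible quotient $\overline{\pi}_{i,j}$ or, when the wall corresponds to a noncompact imaginary root (equivalently, when the Cayley transform is defined), a discrete series constituent $\overline{\pi}_i$. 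This dichotomy is precisely what forces the three shapes of box diagrams: a two-box diagram in the generic interior case $1\leq j\leq n-i-2$, the appearance of $\overline{\pi}_{i+1}$ at the boundary $j=n-i-1$, and the split $\overline{\pi}_i\oplus\overline{\pi}_{i+1}$ at $j=n-i$ (where two walls become simultaneously noncompact). The fact that the ``top box'' is the unique maximal quotient follows from the general statement 2.1.10--2.1.11 of Collingwood that each standard module has a unique irreducible quotient.

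Third, I would compute Gelfand-Kirillov dimensions by identifying the associated variety of each irreducible constituent. For $\SU(n,1)$ there are only three nilpotent $K_\C$-orbits in $(\g/\Ak)_\C^*$: the zero orbit (finite-dimensional representations), the minimal orbit (dimension $n$), and the principal orbit (dimension $2n-1$). The constituents $\overline{\pi}_{0,j}$, $\overline{\pi}_{i,1}$ at the boundary of the parameter lattice, together with $\overline{\pi}_0$ and $\overline{\pi}_n$, are cohomologically induced (of type $A_\q(\lambda)$) from a proper $\theta$-stable parabolic subalgebra, so their associated variety is the closure of the minimal orbit and their Gelfand-Kirillov dimension equals $n$; this can be read either from the Vogan-Zuckerman classification or directly from the leading $K$-type multiplicities. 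All other $\overline{\pi}_{i,j}$ and the interior discrete series $\overline{\pi}_l$ for $1\leq l\leq n-1$ have associated variety equal to the whole nilpotent cone, yielding dimension $2n-1$.

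The main obstacle I expect is the second step, in particular determining at which parameters a wall-crossing ``drops'' a discrete series factor rather than merely permuting principal-series constituents. This is the point where the combinatorics of the compact versus noncompact imaginary roots in $\Pi$ enters nontrivially, and it is what distinguishes the three distinct box-diagram shapes; everything else is a bookkeeping exercise once this dichotomy is controlled.
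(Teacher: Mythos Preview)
The paper does not prove this theorem at all: it is stated in Section~\ref{section about Col} as a result quoted from Collingwood's book \cite{CollinRROLG}, with explicit pointers (``page 83, page 208 and Theorem 5.3.1'') to where the reader can find the argument. The whole of Section~\ref{section about Col} is a compilation of results and notation from that reference, and Theorems~\ref{Thm Col: Spin}, \ref{Thm Col: SU}, \ref{Thm Col: Sp} are all presented without proof.

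Your outline is a reasonable high-level sketch of how Collingwood's argument runs (translation to the trivial infinitesimal character via Theorem~\ref{Thm Col: red inf ch}, wall-crossing to produce the composition series, and the nilpotent-orbit classification for the Gelfand--Kirillov dimensions), so in spirit it matches the source the paper is citing. But there is nothing in the present paper to compare it against; if you want to verify the details you should go directly to \cite{CollinRROLG}.
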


\begin{Thm}[(for $G=\Sp(n,1)$, page 85 and Theorem 5.4.1)]\label{Thm Col: Sp}
Trivial regular characters for the principal series representations, for $0\leq i < j\leq n$:
\begin{equation}
    \gamma_{ij} := (n+1-i) \varepsilon_1 +(n+1-j) \varepsilon_2 + \sum_{l=3}^{i+2} (n+4-l) \varepsilon_l + \sum_{l=i+3}^{j+1} (n+3-l) \varepsilon_l + \sum_{l=j+2}^{n+1} (n+2-l) \varepsilon_l 
\end{equation}
and for $0\leq i \leq 2n-j\leq n-1$:
\begin{equation}
    \gamma_{ij} := (n+1-i) \varepsilon_1 + (n-j) \varepsilon_2 + \sum_{l=3}^{i+2} (n+4-l) \varepsilon_l + \sum_{l=i+3}^{2n-j+2} (n+3-l) \varepsilon_l + \sum_{l=2n-j+3}^{n+1} (n+2-l) \varepsilon_l~.
\end{equation}
Trivial regular characters for the discrete series representations: 
\begin{equation}
    2\gamma_i := \sum_{l=1}^i (n-l+2)e_l + \sum_{l=i+1}^n (n-l+1)e_l + (n-i+1) e_{n+1} 
\end{equation}
The corresponding principal series decompositions: \vspace{0.5cm}

\begin{minipage}{0.3\textwidth}

\begin{center}
{\footnotesize For $0\leq i\leq n-1$,}
\vspace{0.2cm}

\begin{tikzpicture}[scale = 0.7]
\draw (-1.2,0) node  {$\pi_{i,2n-i} =$};
\draw [] (0,0) rectangle (4,1);
\draw [] (0,0) rectangle (4,-1);
\draw [] (1.7,0) to (1.7,-1);
\draw [] (2.3,0) to (2.3,-1);
\draw (2,0.5) node  {$\overline{\pi}_{i,2n-i}$};
\draw (0.8,-0.5) node  {$\overline{\pi}_{i}$};
\draw (3.2,-0.5) node  {$\overline{\pi}_{i+1}$};
\draw (2,-0.5) node  {$\oplus$};
\end{tikzpicture}
\end{center}
\end{minipage}
$\left|
\begin{minipage}{0.3\textwidth}
\begin{center}
{\footnotesize For $0\leq i\leq n-2$,}
\vspace{0.2cm}

\begin{tikzpicture}[scale = 0.7]
\draw (-1.5,-0.5) node  {$\pi_{i,2n-i-1} =$};
\draw [] (0,0) rectangle (6,1);
\draw [] (0,0) rectangle (6,-1);
\draw [] (3.5,0) to (3.5,-1);
\draw [] (4.1,0) to (4.1,-1);
\draw [] (0,-1) rectangle (6,-2);
\draw (3,0.5) node  {$\overline{\pi}_{i,2n-i-1}$};
\draw (1.8,-0.5) node  {$\overline{\pi}_{i+1,2n-i-1}$};
\draw (5.1,-0.5) node  {$\overline{\pi}_{i,2n-i}$};
\draw (3.8,-0.5) node  {$\oplus$};
\draw (3,-1.5) node  {$\overline{\pi}_{i+1}$};
\end{tikzpicture}
\end{center}
\end{minipage}\hspace{15mm}\right|$
\begin{minipage}{0.33\textwidth}
\begin{center}
\textcolor{white}{kk}
\vspace{0.2cm}

\begin{tikzpicture}[scale = 0.7]
\draw (-1.2,0) node  {$\pi_{n-1,n} =$};
\draw [] (0,0) rectangle (3,1);
\draw [] (0,0) rectangle (3,-1);
\draw [] (0.2,0) to (0.2,-1);
\draw (1.5,0.5) node  {$\overline{\pi}_{n-1,n}$};
\draw (1.7,-0.5) node  {$\overline{\pi}_{n-1,n+1}$};
\end{tikzpicture}
\end{center}
\end{minipage}

\vspace{0.3cm}
\begin{minipage}{0.6\textwidth}
\begin{center}
\begin{tikzpicture}[scale = 0.7]
\draw (-1.5,-0.5) node  {$\pi_{n-2,n} =$};
\draw [] (0,0) rectangle (9,1);
\draw [] (0,0) rectangle (9,-1);
\draw [] (3.4,0) to (3.4,-1);
\draw [] (4,0) to (4,-1);
\draw [] (6.7,0) to (6.7,-1);
\draw [] (6.1,0) to (6.1,-1);
\draw [] (0,-1) rectangle (9,-2);
\draw (4.5,0.5) node  {$\overline{\pi}_{n-2,n}$};
\draw (1.8,-0.5) node  {$\overline{\pi}_{n-2,n+1}$};
\draw (5.1,-0.5) node  {$\overline{\pi}_{n-1,n}$};
\draw (7.8,-0.5) node  {$\overline{\pi}_{n}$};
\draw (3.7,-0.5) node  {$\oplus$};
\draw (6.4,-0.5) node  {$\oplus$};
\draw (4.5,-1.5) node  {$\overline{\pi}_{n-1,n+1}$};
\end{tikzpicture}
\end{center}
\end{minipage}
\begin{minipage}{0.33\textwidth}
\begin{center}
\begin{tikzpicture}[scale = 0.7]
\draw (-1.4,-0.5) node  {$\pi_{n-2,n-1} =$};
\draw [] (0,0) rectangle (3,1);
\draw [] (0,0) rectangle (3,-1);
\draw [] (0,-1) rectangle (3,-2);
\draw [] (0.2,0) to (0.2,-1);
\draw [] (0.4,-2) to (0.4,-1);
\draw (1.5,0.5) node  {$\overline{\pi}_{n-2,n-1}$};
\draw (1.7,-0.5) node  {$\overline{\pi}_{n-2,n}$};
\draw (1.9,-1.5) node  {$\overline{\pi}_{n}$};
\end{tikzpicture}
\end{center}
\end{minipage}

\vspace{0.3cm}
\begin{minipage}{0.5\textwidth}
\begin{center}
{\footnotesize For $0\leq i\leq n-3$,~~$i+1\leq j\leq 2n-i-2$\\ and $j-i\geq 3$,}
\vspace{0.2cm}

\begin{tikzpicture}[scale = 0.7]
\draw (-1,-0.5) node  {$\pi_{i,j} =$};
\draw [] (0,0) rectangle (4,1);
\draw [] (0,0) rectangle (4,-1);
\draw [] (1.7,0) to (1.7,-1);
\draw [] (2.3,0) to (2.3,-1);
\draw [] (0,-1) rectangle (4,-2);
\draw (2,0.5) node  {$\overline{\pi}_{i,j}$};
\draw (0.8,-0.5) node  {$\overline{\pi}_{i+1,j}$};
\draw (3.2,-0.5) node  {$\overline{\pi}_{i,j+1}$};
\draw (2,-0.5) node  {$\oplus$};
\draw (2,-1.5) node  {$\overline{\pi}_{i+1,j+1}$};
\end{tikzpicture}
\end{center}
\end{minipage}
\begin{minipage}{0.5\textwidth}
\begin{center}
{\footnotesize For $0\leq i\leq n-3$,~~$i+1\leq j\leq 2n-i-2$\\ and $j-i=1$,}
\vspace{0.2cm}

\begin{tikzpicture}[scale = 0.7]
\draw (-1.4,-0.5) node  {$\pi_{i,j} =$};
\draw [] (0,0) rectangle (3,1);
\draw [] (0,0) rectangle (3,-1);
\draw [] (0,-1) rectangle (3,-2);
\draw [] (0.2,0) to (0.2,-1);
\draw [] (0.4,-2) to (0.4,-1);
\draw (1.5,0.5) node  {$\overline{\pi}_{i,j}$};
\draw (1.7,-0.5) node  {$\overline{\pi}_{i,j+1}$};
\draw (1.9,-1.5) node  {$\overline{\pi}_{i+2;j+2}$};
\end{tikzpicture}
\end{center}
\end{minipage}

\begin{minipage}{0.6\textwidth}
\begin{center}
{\footnotesize For $0\leq i\leq n-3$,~~$i+1\leq j\leq 2n-i-2$ and $j-i=2$,}
\vspace{0.2cm}

\begin{tikzpicture}[scale = 0.7]
\draw (-1.5,-0.5) node  {$\pi_{i,j} =$};
\draw [] (0,0) rectangle (9,1);
\draw [] (0,0) rectangle (9,-1);
\draw [] (3.4,0) to (3.4,-1);
\draw [] (4,0) to (4,-1);
\draw [] (6.7,0) to (6.7,-1);
\draw [] (6.1,0) to (6.1,-1);
\draw [] (0,-1) rectangle (9,-2);
\draw (4.5,0.5) node  {$\overline{\pi}_{i,j}$};
\draw (1.8,-0.5) node  {$\overline{\pi}_{i+2,j+1}$};
\draw (5.1,-0.5) node  {$\overline{\pi}_{i,j+1}$};
\draw (7.8,-0.5) node  {$\overline{\pi}_{i+1,j}$};
\draw (3.7,-0.5) node  {$\oplus$};
\draw (6.4,-0.5) node  {$\oplus$};
\draw (4.5,-1.5) node  {$\overline{\pi}_{i+1,j+1}$};
\end{tikzpicture}
\end{center}
\end{minipage}

\vspace{2mm}
where:
\newline $\overline{\pi}_{0,1}$ is finite dimensional. 
\newline $\overline{\pi}_{0,j}$ for $2\leq j\leq 2n$, $\overline{\pi}_{1,2}$, and ${\pi}_{0}$ have Gelfand-Kirillov dimension $2n+1$. 
\newline $\overline{\pi}_{i,j}$ for $1\leq i\leq n$ and $i+1\leq j\leq 2n$, and $\pi_{l}$ for $1\leq l\leq n$ have Gelfand-Kirillov dimension $4n-1$. 
\end{Thm}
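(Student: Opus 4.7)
The plan is to specialize Collingwood's general program in \cite{CollinRROLG} to the case $G = \Sp(n,1)$, which has restricted root system of type $(BC)_1$ and complex root system of type $C_{n+1}$. Three ingredients are needed: enumeration of the regular characters of trivial infinitesimal character on the split and compact Cartan subalgebras; determination of the composition series of each standard principal series module; and identification of the Gelfand-Kirillov dimension of every irreducible subquotient.

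For the enumeration step I would use the Langlands parameterization directly. A regular character $(\Gamma,\gamma)$ on the split Cartan $\h = \At\oplus\Aa$ with trivial infinitesimal character must have $\gamma$ in the $W_\C$-orbit of $\rho$ subject to the integrality $d\Gamma = \gamma - \rho_\m$; choosing dominant representatives with respect to each relevant positive system produces the explicit list $\gamma_{ij}$. The two subcases in the statement simply reflect the two ways of permuting the $\varepsilon_1$ and $\varepsilon_2$ coordinates of $\rho$ through the remaining coordinates, indexed by $i$ and $j$. Applying the same procedure to the compact Cartan with the shifted integrality $d\Gamma = \gamma + \rho - 2\rho_\Ak$ gives the $n+1$ discrete series parameters $\gamma_i$, one per choice of Weyl chamber of $W(\Ak_\C,\h_\C)$ inside $W(\g_\C,\h_\C)$.

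For the composition series I would combine the uniqueness of the Langlands quotient (Collingwood's 2.1.10--2.1.11, which gives $\overline{\pi}_{ij}$ as the top box) with the fact that every other subquotient of $\pi_{ij}$ is of the form $\overline{\pi}_{kl}$ or $\overline{\pi}_m$ at the same infinitesimal character. Which constituents actually occur, and in which row of the diagram, is determined by computing $\Hom$-spaces between standard modules via Bruhat-cell analysis of the long intertwining operator, or equivalently by $R$-group analysis at the reducibility wall. The top box is the Langlands quotient, the bottom is the unique irreducible submodule, and the middle row collects the constituents reached by single Kazhdan-Lusztig wall-crossings in the coherent family of $\pi_{ij}$.

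For the Gelfand-Kirillov dimensions I would use that there are only three relevant nilpotent $K_\C$-orbits in $\p_\C$ for $\Sp(n,1)$, giving GK-dimensions $0$, $2n+1$, and $4n-1$, and that the associated variety of an irreducible Harish-Chandra module is a closed union of such orbits. Each $\overline{\pi}_{ij}$ is then matched to an orbit by tracking limits of discrete series under translation through walls (using $\Psi^{\lambda+\mu}_\lambda$), or by reading off the leading term of its $K$-spectrum in the compact picture. The main obstacle is the case analysis for the composition series: the corner and near-diagonal cases ($j-i\in\{1,2\}$, $j = 2n-i$, and the exceptional low-rank configurations like $\pi_{n-2,n}$) produce degenerate diagrams that do not fit the generic four-constituent pattern, and each must be verified directly by explicit intertwining-operator calculations rather than by a uniform argument.
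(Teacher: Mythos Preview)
The paper does not actually prove this statement: Theorem~5 is quoted verbatim from Collingwood's monograph \cite{CollinRROLG} (the title itself carries the citation ``page 85 and Theorem 5.4.1''), and Section~\ref{section about Col} merely collects these results for later use. So there is no ``paper's own proof'' to compare against.

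Your sketch is a reasonable outline of the machinery that Collingwood develops to obtain such results, and the three-step plan (enumerate regular characters at $\rho$; determine composition series via wall-crossing/coherent continuation; read off GK-dimensions from the associated variety) is indeed the architecture of Chapters~4--8 of \cite{CollinRROLG}. That said, what you have written is a roadmap rather than a proof: the actual content lies precisely in the ``case analysis'' you flag as an obstacle, and Collingwood's arguments for the degenerate configurations (especially the $j-i\in\{1,2\}$ chains and the interaction with discrete series at $j=2n-i$) rely on his explicit Hecke-algebra and $\tau$-invariant computations rather than generic intertwining-operator reasoning. If you were asked to supply a self-contained proof, you would need to either reproduce those computations or cite the relevant theorems in \cite{CollinRROLG} directly, which is exactly what the paper does.
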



\section{Algorithm for finding the residue representations}

\label{section Algo}
In \cite{ROBY2021}, the residue representations have been completely determined in terms of their Langlands parameters under the assumption that the resonances arise from the poles the trivial Plancherel density. In this section, we present an algorithm which allows us to compute the Langlands parameters of each residue representation, the Gelfand-Kirillov dimension of the space of these representations and their wave front set under no restriction on $\tau$. 
The input of the algorithm is the highest weight of $\tau$. 

The majority of tools we are using in this part comes from \cite{CollinRROLG}. 
The idea which we borrow from this book is to reduce the study of the composition series of every principal series representation to the case of principal series representations with trivial infinitesimal character using the translation functors \eqref{eq Def up/down functors}. 

We prove first two facts which will simplify the algorithm. 

\begin{Prop}\label{Prop reg inf ch}
Let $\sigma$ be a $M$-type of $\tau$. We denote by $\lambda_k$, with $k\in \Z_+$ the (negative) poles of the Plancherel density $p_\sigma$ associated with $\sigma$. 
Then the principal series $\Hi_{\lambda_k}^\sigma$ has always a regular infinitesimal character $\gamma_{\lambda_k}^\sigma$. 
\end{Prop}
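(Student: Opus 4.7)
The strategy is to combine two explicit pieces of data: the Harish-Chandra parameter of $\pi^\sigma_\lambda$ written in terms of $\mu_\sigma$ and $\lambda$, and Miatello's explicit description of the poles of $p_\sigma$. Regularity is then a finite pairing check, carried out case by case.

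First I would pin down the Harish-Chandra parameter of $\pi^\sigma_\lambda$. Using \cite[Proposition 8.22 and Lemma 12.28]{Kna1}, already invoked in \eqref{Multope}, the infinitesimal character of $\pi^\sigma_\lambda$ corresponds to the parameter
\[
\gamma^\sigma_\lambda = (\mu_\sigma + \rho_\m) + \lambda \;\in\; \h_\C^* = \At_\C^* \oplus \Aa_\C^*,
\]
where $\mu_\sigma + \rho_\m\in\At_\C^*$ and $\lambda\in\Aa_\C^*$ are orthogonal with respect to the bilinear extension of $\langle\cdot,\cdot\rangle$. The consistency with \eqref{Multope} is immediate: $\langle\gamma^\sigma_\lambda,\gamma^\sigma_\lambda\rangle-\langle\rho,\rho\rangle=\langle\mu_\sigma+\rho_\m,\mu_\sigma+\rho_\m\rangle-\langle\lambda,\lambda\rangle-\langle\rho,\rho\rangle$, because the cross-term vanishes by orthogonality of $\At$ and $\Aa$.

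Next I would translate regularity of $\gamma^\sigma_\lambda$ into a pairing condition on the full root system $\Pi$, and split $\Pi = \Pi_\m \sqcup (\Pi\setminus\Pi_\m)$. For $\beta\in\Pi_\m$, we have $\beta|_\Aa=0$, hence
\[
\langle \gamma^\sigma_\lambda,\beta\rangle = \langle \mu_\sigma+\rho_\m,\beta\rangle \neq 0
\]
for every $\lambda$, because $\mu_\sigma+\rho_\m$ is strictly $\Pi_\m^+$-dominant (standard $\rho$-shift of the highest weight of the irreducible representation $\sigma$ of $M$). For $\beta\in\Pi\setminus\Pi_\m$, the restriction $\beta|_\Aa$ belongs to $\{\pm\alpha/2,\pm\alpha\}$; writing $\beta = \beta_\m + c_\beta\,\alpha$ with $c_\beta\in\{\pm1/2,\pm1\}$, one gets
\[
\langle \gamma^\sigma_\lambda,\beta\rangle = \langle \mu_\sigma+\rho_\m,\beta_\m\rangle + c_\beta\,\langle \lambda,\alpha\rangle,
\]
which vanishes for at most one value $\lambda^\beta_\sigma$ of the parameter. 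Consequently the set of ``forbidden'' parameters making $\gamma^\sigma_\lambda$ singular is finite and computed explicitly from $\mu_\sigma$ and from the integer coefficients $c_\beta$.

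Finally I would confront this finite forbidden set with Miatello's explicit pole list. In each rank-one case \cite{Mia} expresses $p_\sigma$ as a product of Gamma or trigonometric factors whose poles form a symmetric arithmetic progression inside $i\Z$ or $i(\Z+\tfrac{1}{2})$, with shift prescribed by $\mu_\sigma$, $m_\alpha$ and $m_{\alpha/2}$; and Baldoni's branching rules \cite{Bald1} restrict $\mu_\sigma$ to the weights occurring in $\hat{M}(\tau)$. A direct case-by-case comparison then shows that each $\lambda_k$ avoids every $\lambda^\beta_\sigma$, from which the regularity of $\gamma^\sigma_{\lambda_k}$ follows.

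The main obstacle will be the last step: the case-by-case bookkeeping across the four rank-one geometries, combined with the constraints coming from $\hat{M}(\tau)$. The quaternionic case $G=\Sp(n,1)$ is expected to be the most delicate, since $\Pi_\m$ is of type $C_{n-1}$ (enlarging $\Pi\setminus\Pi_\m$) and because $\tau$ may occur with multiplicity two in the principal series, so several $M$-types and several pole families must be compared simultaneously.
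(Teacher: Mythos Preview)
Your setup is exactly the paper's: compute $\gamma^\sigma_\lambda=(\mu_\sigma+\rho_\m)+\lambda$, observe that regularity is automatic for roots in $\Pi_\m$, and identify for each $\beta\in\Pi\setminus\Pi_\m$ the single forbidden value $\lambda^\beta_\sigma$ determined by $\langle\mu_\sigma+\rho_\m,\beta_\m\rangle+c_\beta\langle\lambda,\alpha\rangle=0$.

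Where you diverge is in the last step, and there the proposal has a gap. You plan to compare the forbidden set to Miatello's pole list ``case by case'', invoking Baldoni's branching rules to pin down $\mu_\sigma$. But the proposition is for an \emph{arbitrary} $\tau$, so $\hat{M}(\tau)$ ranges over all of $\hat{M}$ as $\tau$ varies; the branching rules restrict nothing, and there are infinitely many $\mu_\sigma$ to check. Your description of the poles as a symmetric arithmetic progression is also slightly off: the full Plancherel density is that meromorphic factor \emph{multiplied} by a polynomial $P_\sigma$, and it is precisely the zeros of $P_\sigma$ that suppress some of the would-be poles.

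The paper closes this gap with one conceptual observation that you are a hair's breadth from: up to a nonzero constant coming from $\Pi_\m^+$,
\[
P_\sigma(z)=\prod_{\eta\in\Pi^+\setminus\Pi_\m^+}\langle\eta,\,z\alpha+i(\mu_\sigma+\rho_\m)\rangle,
\]
so the zero set of $P_\sigma$ is \emph{exactly} your forbidden set $\{\lambda^\beta_\sigma:\beta\in\Pi\setminus\Pi_\m\}$. Since $\lambda_k$ is a genuine pole of $p_\sigma$, it cannot be a zero of $P_\sigma$, hence $\gamma^\sigma_{\lambda_k}$ is regular. This works uniformly in $\sigma$ with no appeal to branching rules; the only ``case by case'' content is the tabulation of $\Pi^+\setminus\Pi_\m^+$ in the four rank-one geometries, which the paper records and then illustrates for $\SU(n,1)$.
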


\begin{proof}
Let us first recall the precise formula of the polynomial part $P_\sigma$ of the Plancherel density $p_\sigma$. This formula, for real rank one groups, can be found in \cite[page 543]{Knapp&Stein}:
\begin{equation*}
    P_\sigma(z) = \prod_{\eta \in \Pi^+}\langle \eta,z\alpha +i(\mu_\sigma+\rho_\m)\rangle~,
\end{equation*}
which can be decomposed in 
\begin{equation*}
    P_\sigma(z) = \prod_{\eta \in \Pi^+\setminus\Pi_\m^+}\langle \eta,z\alpha +i(\mu_\sigma+\rho_\m)\rangle ~\times~\prod_{\eta \in \Pi_\m^+}\langle \eta,i(\mu_\sigma+\rho_\m)\rangle~.
\end{equation*}
Here just the first product is interesting to us. Recall also that we do not take into account multiplicative constants. Recall that $\Pi^+\setminus\Pi_\m^+$ is the set of positive roots not vanishing on $\Aa$.  
Let us recall what happens for each case.
\begin{center}
    \begin{tabular}{|Sc|Sc|}
\hline
    G & $\Pi^+\setminus\Pi^+_\m$ \\
     \hline
     $\Spin(2n,1)$ & $\varepsilon_1, ~~\varepsilon_1\pm\varepsilon_j , ~2\leq j \leq n$ \\
     \hline
     $\SU(n,1)$ & $\varepsilon_1-\varepsilon_{n+1}, ~~\begin{array}{cc}\varepsilon_1-\varepsilon_j ,& ~2\leq j \leq n,\\\varepsilon_j-\varepsilon_{n+1} ,& ~2\leq j \leq n\end{array}$\\
     \hline 
     $\Sp(n,1)$ &$2 \varepsilon_1$, $2 \varepsilon_2$,$\begin{array}{cc}
         \varepsilon_1\pm\varepsilon_j,& ~2\leq j \leq n+1  \\
          \varepsilon_2\pm\varepsilon_j,& ~2\leq j \leq n+1
     \end{array}$ \\
     \hline 
     $\F_{4(-20)}$ & $\begin{array}{c}\varepsilon_1,~\varepsilon_1\pm\varepsilon_j, ~2\leq j \leq 4\\ \frac12 (\varepsilon_1\pm\varepsilon_2\pm\varepsilon_3\pm\varepsilon_4) \end{array}$\\
     \hline
\end{tabular}
\end{center}
Decomposing each root in term of the fundamental weights, we get, up to a constant:
\begin{equation*}
    P_\sigma(z) = \prod_{\eta \in \Pi^+\setminus\Pi_\m^+}\Big(\langle \eta,z\alpha\rangle^2 +\langle \eta,\mu_\sigma + \rho_\m\rangle^2\Big) ~.
\end{equation*}
Thus the zeros of this product are in the set
\begin{equation*}
    \left\{\pm\langle \eta,\mu_\sigma+\rho_\m\rangle ~\Big|~\eta\in\Pi_\m^+\right\}~.
\end{equation*}
If $\lambda\alpha$ is a pole of the Plancherel formula, then $\lambda$ is not in this set. This implies that $\gamma_{\lambda\alpha}^\sigma$ is regular. We prove this fact for $\SU(n,1)$, the other cases being similar.
We recall that for $G=\SU(n,1)$
\begin{align*}
    \gamma_{\lambda\alpha}^\sigma &= \lambda(\varepsilon_1-\varepsilon_{n+1}) + \mu_\sigma+\rho_\m \\&= (\lambda+\langle \varepsilon_1,\mu_\sigma+\rho_\m\rangle)\varepsilon_1+ (-\lambda+\langle \varepsilon_{n+1},\mu_\sigma+\rho_\m\rangle)\varepsilon_{n+1}) + \sum_{i=2}^n \langle \varepsilon_i,\mu_\sigma+\rho_\m\rangle \varepsilon_i~.
\end{align*}
So, $\gamma_{\lambda\alpha}^\sigma$ is regular if and only if $\lambda \ne \langle \varepsilon_i-\varepsilon_1,\mu_\sigma+\rho_\m\rangle$ and $\lambda \ne \langle \varepsilon_{n+1}-\varepsilon_i,\mu_\sigma+\rho_\m\rangle$ for all $2\leq i \leq n$. 
This is exactly the set of zeros of the polynomial part of the Plancherel density given by the numbers 
\begin{equation*}
    0, ~~\pm\langle\varepsilon_1-\varepsilon_j,\mu_\sigma+\rho_\m\rangle ,~~ \pm\langle\varepsilon_j-\varepsilon_{n+1},\mu_\sigma+\rho_\m\rangle , ~~ \text{for } 2\leq j \leq n~.
\end{equation*}
\end{proof}

The following well-known lemma assures that the study of the subquotients of the principal series representations with the trivial infinitesimal characters is sufficient to identify the subquotient of any principal series representation. Not having found a reference, we include a proof. 

\begin{Lemma}\label{Lemma phi image}
Let $\gamma_1$ and $\gamma_2$ two regular characters such that the unique irreducible quotient $\overline{\pi}(\gamma_1)$ is a component of $\pi(\gamma_2)$. Then $\pi(\gamma_1)$ and $\pi(\gamma_2)$ have the same infinitesimal character and we can therefore fix $w\in W_\C$ such that $w\cdot\gamma_1 = \gamma_2$. 
Moreover for all $\mu\in \Lambda$ \begin{equation}
    \Phi_{w(\gamma_1+\mu)}^{w\gamma_1}(\overline{\pi}(\gamma_1)) = \Phi_{\gamma_1+\mu}^{\gamma_1}(\overline{\pi}(\gamma_1))~.
\end{equation}
\end{Lemma}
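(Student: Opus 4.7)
The plan is to separate the two assertions and handle each directly from the definitions recalled in Section 4.

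\textbf{Infinitesimal character equality.} Standard modules carry a genuine (not merely generalized) infinitesimal character: $\pi(\gamma)$ has character $\chi_\gamma$ and this is inherited by every composition factor. Applying this to $\overline{\pi}(\gamma_1)$, seen both as the irreducible quotient of $\pi(\gamma_1)$ and, by hypothesis, as a composition factor of $\pi(\gamma_2)$, one obtains $\chi_{\gamma_1}=\chi_{\gamma_2}$. Harish--Chandra's parametrization of $\Zentrum(\g_\C)$-characters by $W_\C$-orbits then produces $w\in W_\C$ with $w\cdot\gamma_1=\gamma_2$.

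\textbf{Equality of translation functor values.} I would unfold \eqref{eq Def up/down functors} on both sides and check that each ingredient depends only on the relevant $W_\C$-orbit. Since $\overline{\pi}(\gamma_1)$ has genuine infinitesimal character $\chi_{\gamma_1}=\chi_{w\gamma_1}$, both $p_{\gamma_1}$ and $p_{w\gamma_1}$ restrict to the identity on it, so
\[
\Phi_{\gamma_1+\mu}^{\gamma_1}\bigl(\overline{\pi}(\gamma_1)\bigr)=p_{\gamma_1+\mu}\bigl(\overline{\pi}(\gamma_1)\otimes F^\mu\bigr),\qquad
\Phi_{w(\gamma_1+\mu)}^{w\gamma_1}\bigl(\overline{\pi}(\gamma_1)\bigr)=p_{w(\gamma_1+\mu)}\bigl(\overline{\pi}(\gamma_1)\otimes F^{w\mu}\bigr).
\]
The equality of outer projections $p_{\gamma_1+\mu}=p_{w(\gamma_1+\mu)}$ follows because $\gamma_1+\mu$ and $w(\gamma_1+\mu)=w\gamma_1+w\mu$ lie in a single $W_\C$-orbit, so the subcategories $\HC(\gamma_1+\mu)$ and $\HC(w(\gamma_1+\mu))$ coincide. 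The identification $F^{w\mu}=F^\mu$ follows because $\mu$ and $w\mu$ lie in one $W_\C$-orbit and therefore have the same dominant representative, hence determine the same irreducible finite-dimensional $G$-module. Combining the three identifications gives the claimed equality.

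\textbf{Main subtlety.} The only delicate point is the interpretation of $F^{w\mu}$ when $w\mu$ is not dominant, since the paper introduces $F^\mu$ literally as the irreducible module of highest weight $\mu$. I would address this by adopting the convention that $F^\nu$ depends only on the $W_\C$-orbit of $\nu$ (via its unique dominant representative); this is the reading demanded by the Zuckerman translation functor machinery and renders the displayed expressions above honestly well-defined, after which the comparison becomes routine.
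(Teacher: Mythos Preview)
Your argument is correct. Unfolding the definition and checking that each ingredient $p_\lambda$, $p_{\lambda+\mu}$, $F^\mu$ depends only on the relevant $W_\C$-orbit is exactly the content of the $W_\C$-invariance of translation functors, and your handling of the convention on $F^{w\mu}$ is the right way to make the definition sensible.

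The paper takes a different, less direct route. Instead of arguing $W_\C$-invariance of $\Phi$ head-on, it quotes Vogan \cite[Lemma~7.3.1]{VoganRRRLG} for the equality $\Psi^{w\gamma_1}_{w(\gamma_1+\mu)}=\Psi^{\gamma_1}_{\gamma_1+\mu}$, then combines this with two further facts from the translation machinery: that $\Psi$ sends $\overline{\pi}(\gamma_1+\mu)$ to $\overline{\pi}(\gamma_1)$ (Collingwood, Remark~4.3.3.ii), and that $\Phi\Psi$ acts as the identity on irreducibles when both parameters are regular. From these it computes both sides explicitly as $\overline{\pi}(\gamma_1+\mu)$. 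Your approach is more elementary and self-contained -- you essentially reprove Vogan's lemma for $\Phi$ directly rather than transporting the $\Psi$ statement through the adjunction. The paper's detour buys one extra piece of information you do not extract: the common value of the two sides is identified as $\overline{\pi}(\gamma_1+\mu)$, which is what is actually used downstream in the algorithm. For the lemma as literally stated, however, your argument is sufficient and cleaner.
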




\begin{proof}
First \cite[Remark 4.3.3.ii]{CollinRROLG} assures that $\Psi^{\gamma_1+\mu}_{\gamma_1}(\overline{\pi}(\gamma_1+\mu)) = \overline{\pi}(\gamma_1)$ and $\Psi^{\gamma_2+w\mu}_{\gamma_2}(\overline{\pi}(\gamma_2+w\mu)) = \overline{\pi}(\gamma_2)$. 
Moreover, by \cite[Lemma 7.3.1 page 462]{VoganRRRLG}, $\Psi_{w(\gamma_1+\mu)}^{w\gamma_1} = \Psi_{\gamma_1+\mu}^{\gamma_1}$. 
As $\overline{\pi}(\gamma_1)$ is irreducible, we have 
\begin{equation*}
    \begin{array}{ll}
        \Phi_{\gamma_1+\mu}^{\gamma_1} \Psi^{\gamma_1+\mu}_{\gamma_1} (\overline{\pi}(\gamma_1+\mu)) &= \overline{\pi}(\gamma_1+\mu)~,\\
        \Phi_{\gamma_2+w\mu}^{\gamma_2} \Psi^{\gamma_2+w\mu}_{\gamma_2} (\overline{\pi}(\gamma_1+\mu)) &= \overline{\pi}(\gamma_1+\mu)~.
\end{array}
\end{equation*}
Putting everything together, we obtain
\begin{equation*}
    \Phi_{\gamma_1+\mu}^{\gamma_1}(\overline{\pi}(\gamma_1)) = \overline{\pi}(\gamma_1+\mu)=
    \Phi_{\gamma_2+w\mu}^{\gamma_2}(\overline{\pi}(\gamma_1)) ~,
\end{equation*}
which proves that $$\Phi_{\gamma_2+w\mu}^{\gamma_2}(\overline{\pi}(\gamma_1)) = \Phi_{\gamma_1+\mu}^{\gamma_1}(\overline{\pi}(\gamma_1))~.$$
\end{proof}


Now we can describe the algorithm to find each residue representation, starting from the representation $\tau\in \hat{K}$ which determines the homogeneous vector bundle $E_\tau$. 
From now on, consider a fixed representation $\tau \in \hat{K}$, with a known highest weight $\mu_\tau$.

\vspace{20mm}
\begin{Thm}\label{Thm algo}
For every $\sigma \in \hat{M}(\tau)$, the residue representation can be found by the following algorithm. 
\begin{enumerate}
    \item Use \cite{Bald1} to compute all the highest weights $\mu_\delta$ for $\delta \in \hat{M}(\tau)$.
    \item Use \cite{Mia} or \cite{ROBY2021} to compute the poles of the Plancherel density associated with $\sigma$. We denote them by $\pm \lambda_k^\sigma \in \frac12\Z$, $k\in \Z_+$, where $\lambda_k^\sigma <0$. For each pole, we define the representation $\E^\sigma_k$ as in section \ref{section residue repr}.
    \item Compute the infinitesimal character of $\Hi_k^\sigma := \Hi^\sigma_{\lambda^\sigma_k}$, which is given
    \begin{equation}\label{eq inf ch princ series}
        \gamma_k^\sigma = \lambda^\sigma_k + \mu_\sigma + \rho_\m~ \in \h^*_\C ~,
    \end{equation}
    where we recall that $\rho_\m$ is the half sum of the positive roots that are $0$ on $\Aa$. Of course, the values of these elements has to be written in terms of fundamental weights of $\Delta(\g_\C,\h_\C)$. 
    \item By one of Theorem 3,4,5 (according to the group $G$) in section \ref{section Col's Thm used}, find the trivial regular character $\gamma_{i,j}$ which corresponds to $\gamma_k^\sigma$. It may depend on $k$. The correspondent pair $(i,j)$ singles out a highest weight $\mu$ such that $\gamma_{i,j} +\mu = \gamma_k^\sigma$. The infinitesimal character $\gamma_{i,j}$ is also the unique trivial inifinitesimal character which induces the same positive root system as $\gamma_k^\sigma$. They are both strictly dominant with respect to this root system. 
    \item By the same theorem, find the decomposition of $\pi_{i,j}$.
    \begin{enumerate}
        \item For each $\delta \in \hat{M}(\tau)$, verify if there exists $w\in W(\g_\C,\h_\C)$ and $\nu\in \Aa_\C^*$ such that $w\cdot\gamma_k^\sigma=\gamma_\nu^\delta$, the infinitesimal character of $\Hi_{\nu}^\delta$. Denote each $\delta$ by $\delta_w$.
        \item By one of Theorem 3,4,5 in section \ref{section Col's Thm used}, find the $\gamma_{l,m} =: \gamma_{\delta_w}$ corresponding to each $\gamma_\nu^{\delta_w}$. 
        \item Write all the compositions series of every $ \pi_{\delta_w}:=\pi_{l,m}$ corresponding to the $\gamma_{\delta_w}$. Suppose we are in multiplicity free case. Then there exists just one subquotient $\overline{\pi}$ which 
        \begin{enumerate}
            \item is in every $\pi_{\delta_w}$ ($\pi_{i,j}$ included),
            \item is the maximal subquotient of one $\pi_{\delta_w}$,
            \item does not appear in any other $\pi_{l,m}\ne \pi_{\delta_w}$, for one $\delta_w$.
        \end{enumerate}
        If $m(\tau|_M,\delta_w) >1$, one can have $m(\tau|_M,\sigma)$ of subquotients verifying the same conditions $(ii)$ and $(iii)$. The condition $(i)$ becomes 
        \newline$(i')$ Every $\pi_{\delta_w}$ contains at least one of these $\overline{\pi}$ subquotients. \newline
        $\E_k$ is the sum of these subquotients.
         \item If one cannot find a suitable $\overline{\pi}$, there are $\pi_l$ (discrete series representations) in $\pi_{i,j}$. If there is one, it is in $\E_k$. If there is not just one, you have to decide which one is in $\E_k$. One can use \cite[Theorem 1]{Parthasarathy} to decide which one. 
    \end{enumerate}
    \item We thus get the residue representation $\E^\sigma_k$ as a sum of Langlands quotient(s). The Gelfand-Kirillov dimension is given by Theorems 3,4,5 in section \ref{section Col's Thm used}. The wave front set can be then computed thanks to \cite[Section 5]{ROBY2021}. 
\end{enumerate}
\end{Thm}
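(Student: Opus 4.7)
The plan is to verify each step of the algorithm by showing that it produces the correct information about $\E^\sigma_k$, using the embedding of $\E^\sigma_k$ into $\Hi^\sigma_{\lambda^\sigma_k}$ described in Section \ref{section residue repr} together with Collingwood's machinery. Steps (1) and (2) are simply invocations of the cited results (Baldoni for the branching of $\tau|_M$, Miatello and \cite{ROBY2021} for the poles of the Plancherel density). Step (3) is the standard computation of the Harish-Chandra parameter of a principal series representation, which follows from the formula $\pi^\sigma_\lambda(\Omega)$ in \eqref{Multope} together with the identification of the split Cartan $\h=\At\oplus\Aa$; regularity of $\gamma_k^\sigma$ is precisely Proposition \ref{Prop reg inf ch}.

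For Step (4), I would use Theorem \ref{Thm Col: red inf ch} together with Lemma \ref{Lemma phi image}: since $\gamma_k^\sigma$ is regular, there is a unique $W_\C$-orbit representative that is strictly dominant with respect to the positive system $\Pi^+_{\gamma_k^\sigma}$, and the difference with the corresponding trivial regular character $\gamma_{i,j}$ in the list from Theorems \ref{Thm Col: Spin}, \ref{Thm Col: SU}, \ref{Thm Col: Sp} is then a dominant weight $\mu$. The translation principle ensures that $\Psi^{\gamma_k^\sigma}_{\gamma_{i,j}}$ induces a bijection between the composition series of $\pi(\gamma_k^\sigma)$ and that of $\pi_{i,j}$ preserving Langlands quotients. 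In particular, irreducible subquotients of $\Hi_k^\sigma$ are in bijection with those of $\pi_{i,j}$, so the diagram drawn in Collingwood's theorems describes the structure of $\Hi_k^\sigma$ after translation.

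The heart of the proof is Step (5), and this is the place I expect to be the main obstacle. By construction in Section \ref{section residue repr}, the residue representation $\E_k^\sigma$ is the $G$-submodule of $\Hi_k^\sigma$ generated by $\sum_j P_j^* V_\tau$, and therefore must appear in the image of the maps $T_j^{i,k}$. Consequently, the irreducible constituents of $\E_k^\sigma$ are exactly those Langlands quotients $\overline{\pi}$ in the composition diagram of $\pi_{i,j}$ that contain the $K$-type $\tau$ and that sit as submodules above a $\tau$-generating bottom piece. Conditions (i)--(iii) in the statement translate these constraints: (i) forces $\overline{\pi}$ to be a composition factor common to all $\pi_{\delta_w}$ (because the $K$-type lives in every principal series containing one of its $M$-types $\delta$ that is $W_\C$-conjugate to $\sigma$); (ii) says that it must occur as a maximal quotient in at least one of them (so that the left action of $G$ on translates of $P_j^* V_\tau$ generates it), and (iii) pins it down uniquely. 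The multiplicity-free case is a direct application of the diagrams; when $m(\tau|_M,\delta_w)>1$ (the quaternionic case) one gets several candidates satisfying (ii) and (iii), and one must argue that their direct sum exactly accounts for the multiplicity of $\tau$ in $\E_k^\sigma$, which is the total multiplicity of $\tau$ in the residue of $\varphi_\tau^{\sigma,\lambda}\ast f$ at $\lambda_k^\sigma$.

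For Step (6), the Gelfand-Kirillov dimension is read off from the tables in Theorems \ref{Thm Col: Spin}--\ref{Thm Col: Sp} (an invariant of the irreducible subquotient), and the wave front set is then computed using the recipe recalled from \cite[Section 5]{ROBY2021}, which expresses $\mathrm{WF}(\overline{\pi})$ in terms of its Langlands data together with the nilpotent orbit corresponding to the Gelfand-Kirillov dimension. The degenerate subcase where no suitable Langlands quotient exists occurs precisely when a discrete series block appears in the diagram of $\pi_{i,j}$; in that situation, selecting the correct discrete series component (when several are present) is accomplished by means of the Dirac inequality, \cite[Theorem 1]{Parthasarathy}, applied to $\tau$, which singles out the unique discrete series containing $\tau$ as a $K$-type.
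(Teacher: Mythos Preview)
Your proposal follows the same line as the paper's own justification: invoke Proposition~\ref{Prop reg inf ch} for regularity, reduce to trivial infinitesimal character via Theorem~\ref{Thm Col: red inf ch} and Lemma~\ref{Lemma phi image}, then locate the $\tau$-containing subquotients in Collingwood's diagrams. Two points in Step~(5) deserve sharpening, though neither changes the overall strategy.

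First, $\E_k^\sigma$ is not literally the $G$-submodule of $\Hi_k^\sigma$ generated by $\sum_j P_j^* V_\tau$; that is the image of the maps $T_j^{i,k}$, and $\E_k^\sigma$ is a further quotient of it under the second map in the factorization of $\RR_k^{\sigma}$. The statement that $\E_k^\sigma$ is composed exactly of the subquotients containing the $K$-type $\tau$ is the content of \cite[Lemma~4.1]{ROBY2021}, which the paper cites directly rather than deriving from the ``submodule generated by $\tau$'' picture. Your added clause ``and that sit as submodules above a $\tau$-generating bottom piece'' is therefore superfluous and potentially misleading.

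Second, your parenthetical justification of condition~(ii) is not the correct one. The reason the sought $\overline{\pi}$ must be the maximal quotient of some $\pi_{\delta_w}$ is the Langlands classification: every irreducible Harish-Chandra module is the unique irreducible quotient of a standard module, and if $\tau$ occurs in $\overline{\pi}$ then by Frobenius reciprocity the inducing $M$-type must lie in $\hat{M}(\tau)$. This has nothing to do with $G$ acting on translates of $P_j^* V_\tau$. With these two clarifications your argument coincides with the paper's.
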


\begin{proof}[Why the algorithm works]
\begin{enumerate}
\item This is just a direct computation. Knowing the highest of any $\sigma\in \hat{M}(\tau)$ is essential. 
\item This step is also a direct computation. 
\item The reader can see for example \cite[Proposition 8.22 and Lemma 12.28]{Kna1} for the formula \eqref{eq inf ch princ series}.
\item Thanks to Proposition \ref{Prop reg inf ch}, we know that $\gamma_k^\sigma$ is regular. By Theorem \ref{Thm Col: red inf ch}, $\gamma_k^\sigma$ corresponds to a trivial infinitesimal character $\gamma_{i,j}$. If we denote by $\Pi_{i,j}^+$ the positive Weyl chamber for which $\gamma_{i,j}$ and $\gamma_k^\sigma$ are strictly dominant, then there exists a highest weight $\mu$ with respect to $\Pi_{i,j}^+$ such that $\gamma_{i,j} + \mu = \gamma_k^\sigma$. 
Thanks to Theorems 3,4,5 in section \ref{section about Col}, we have the composition series of $\pi_{i,j}$ and thus of $\Hi_k^\sigma$.
\item \cite[Lemma 4.1]{ROBY2021} proves that $\E^\sigma_k$ is composed by the sum of the components containing the $K$-type $\tau$. The goal is now to get this information explicitely. 
These components are the $\overline{\pi}^{\sigma,k}_{l,m}:= \Phi_{\gamma_k^\sigma}^{\gamma_{i,j}}(\overline{\pi}_{l,m})$, for each $\overline{\pi}_{l,m}$ in $\pi_{i,j}$. Suppose we are in multiplicity free. There is then just one component containing $\tau$. 
It must be the maximal quotient of a principal series induced by a $\sigma\in \hat{M}(\tau)$ or a discrete series representation. 

As in the Theorem, for each $\delta \in \hat{M}(\tau)$, verify if there exists $w\in W(\g_\C,\h_\C)$ and $\nu\in \Aa_\C^*$ such that $w\cdot\gamma_k^\sigma=\gamma_\nu^\delta$, the infinitesimal character of $\Hi_{\nu}^\delta$. Denote each $\delta$ by $\delta_w$.
Lemma \ref{Lemma phi image} assures us that, two subquotients, contained in principal series having the same infinitesimal character and corresponding to the same $\overline\pi_{l,m}$ after restriction to the trivial infinitesimal character case, are the same. We reduced ourselves to find the overlapping subquotient in the trivial infinitesimal character case.
\newline Thus we compute the trivial infinitesimal character corresponding to each $\gamma_\nu^\delta$ ($\gamma_k^\sigma$ included) and deduce a composition series decomposition. If $m(\tau|_M,\delta_w) =1$ for any $\delta_w$, then we have just to find one overlapping subquotient $\overline\pi_{l,m}$. Recall that this subquotient has to be in every composition series but not in other principal series representations. If no subquotients succeed to verifying these conditions, $\tau$ appears in discrete series representations. One can use \cite[Theorem 1]{Parthasarathy} to decide. 
If $m(\tau|_M,\delta_w) \ne1$, at least for one $\delta_w$, the situation is more complicated, but we have enough information to conclude with the same spirit as before. See the case of $\tau_{1,4,4}$ in section \ref{section pforms Sp} as an example. 

This gives the components where $\tau$ is (or not) and thus the residue representations $\E_k^\sigma$ in terms of (sum of) Langlands quotient(s). If not, a direct application of \cite[Theorem 1]{Parthasarathy} is enough to conclude. 
\item This is a direct consequence of the Theorems and sections cited. 
\end{enumerate}
\end{proof}


\section{Case of the Laplacian of the $p$-forms}
\label{section pforms}

We denote by $\K$ one of $~\R, ~\C,$ or $\Hh$.
Let $E_p := \Lambda^p H^n(\K)$ be the space of $p$-forms over the hyperbolic spaces $G/K = H^n(\K)$. This is the space of sections of the homogeneous vector bundle over $G/K$ associated with the representation $\tau_p : = \Lambda^p \Ad^\ast$, where $\Ad^\ast$ denotes the coadjoint representation of $K$ on $\p_\C^\ast$. This representation is very well described in \cite{CampHigu-pforms,PedTh,PedonpformsC,PedonpformsH,PedCar}, where one can find harmonic analysis on $E_p$. 
In this paper, we do not consider the case $p=0$, where the $K$-type $\tau_0$ is the trivial representation of $K$, and the sections of the bundle $E_{0}$ are just functions on $G/K$. This case is considered and completely described in \cite{HilgPasq}. 

We recall some facts about the structure of $\tau_p$ at the beginning of each section. Then we apply the algorithm described in section \ref{section Algo} and  find the residue representations associated to the resonances of the Laplace operator acting on compactly supported smooth sections of $E_p$, $p\in [1,n]\cap\Z^*_+$. 

\subsection{Real case: $G = \Spin(2n,1)$, $n\geq 2$}

In this case, $K = \Spin(2n)$ and $M=\Spin(2n-1)$. We are not considering $G = \Spin(2n+1,1)$, because there are no resonance in this case (see \cite{ROBY2021}) . 

\subsubsection{Decomposition of the representations}

One can use the branching rules in \cite{Bald1} as in \cite{PedTh}. 

If $p < n$, then $\tau_p$ is irreducible on $K$ and has highest weight $\mu_p = \varepsilon_2 + \cdots + \varepsilon_{p+1}$. When we restrict $\tau_p$ to $M$, we obtain the decomposition $$\tau_p | _M = \sigma_p \oplus \sigma_{p-1}$$ where $\sigma_p$ and $\sigma_{p-1}$ have highest weights $\mu_p$ and $\mu_{p-1}$ respectively. The representation $\tau_p$ decomposes as follows: $$\Lambda^p \C^n = \left( \Lambda^{p} \C^{n-1}\right) \oplus e_1 \wedge \left( \Lambda^{p-1} \C^{n-1}\right)~.$$

The representation $\tau_n$ has two equivalent irreducible subrepresentations. More specifically, we have $$\Lambda^n \C^n = \Lambda^n_+ \C^n \oplus \Lambda^n_- \C^n~.$$
We denote these two irreducible subrepresentations respectively by $\tau_n^-$ and $\tau_n^+$, of respective highest weights: \begin{center}
    $\mu_n^- = \varepsilon_2 + \cdots + \varepsilon_{n} - \varepsilon_{n+1}$ ~~and~~ $\mu_n^+ = \varepsilon_1 + \cdots + \varepsilon_{n+1}$. 
\end{center}
The restriction of any of them to $M$ is irreducible and is equal to the same representation $$\tau_n^- \sim \tau_n^+\sim \sigma_{n-1}\sim \sigma_{n}$$ with highest weight
$\mu_{n-1} = \varepsilon_2 + \cdots + \varepsilon_{n}$.

Since $\tau_p \sim \tau_{2n-p}$, there are no other cases. 

\subsubsection{Poles of the Plancherel density}

This is a direct computation done in \cite[Proposition 3.1 and Appendix A.1]{ROBY2021} using \cite{Mia}. These singularities have been found first by Pedon \cite[p.110]{PedTh}.

$\left.\begin{minipage}{0.35\textwidth}
\definecolor{light-gray}{gray}{0.75}
\begin{center}
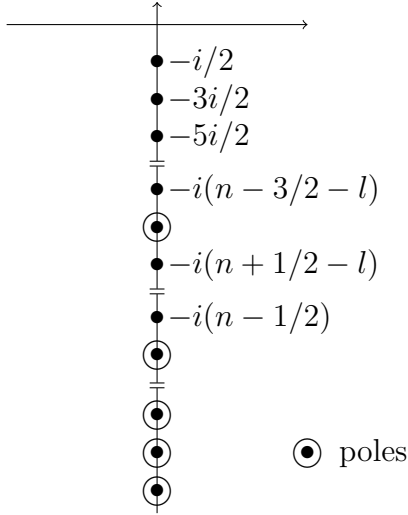

\begin{tikzpicture}[scale=1]
\draw [->](-2,0) to (2,0);
\draw [<-|](0,0.3) to (0,-1.825);
\draw [|-|](0,-1.875) to (0,-3.525);
\draw [|-|](0,-3.575) to (0,-4.775);
\draw [|-](0,-4.825) to (0,-6.5);
\draw (0,-3.9) node[right] {$-i(n-1/2)$} node{$\bullet$};
\draw (0,-3.2) node[right] {$-i(n+1/2-l)$} node{$\bullet$};
\draw (0,-2.7)  node{$\bullet$} node{$\bigcirc$};
\draw (0,-2.2) node[right] {$-i(n-3/2-l)$} node{$\bullet$};
\draw (0,-1.5) node[right] {$-5i/2$} node{$\bullet$};
\draw (0,-1) node[right] {$-3i/2$} node{$\bullet$};
\draw (0,-0.5) node[right] {$-i/2$} node{$\bullet$};
\draw (0,-4.4)  node{$\bullet$} node{$\bigcirc$};
\draw (0,-5.2) node{$\bullet$} node{$\bigcirc$};
\draw (0,-5.7)  node{$\bullet$} node{$\bigcirc$};
\draw (0,-6.2)  node{$\bullet$} node{$\bigcirc$};
\draw (2,-5.7)  node{$\bullet$} node{$\bigcirc$} node[right]{~~poles};
\end{tikzpicture}
\captionof{figure}{Singularities of the Plancherel densities for $\sigma_l$}\label{singularities}
\end{center}
\end{minipage}
\right|$
\begin{minipage}{0.05\textwidth}
\end{minipage}
\begin{minipage}{0.55\textwidth}
\begin{Lemma}\label{Lemma singularities p_sigma p forms}
The singularities of the Plancherel density corresponding to $\sigma_l$ are located at
\begin{equation}\label{eq singularities p_sigma p forms}
    \left\{\pm i(n-1/2-l), ~\pm i(\rho_\alpha+k) ~\text{ for } k\in\Z^\times_+\right\}
\end{equation}
For $k\in\Z^\times_+$, set
\begin{equation}
    \lambda^l_k = -i(\rho_\alpha+k) ~~~\text{ and } ~~~~ \lambda^l_0 = -i(n-1/2-l).
\end{equation}
\end{Lemma}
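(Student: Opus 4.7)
My plan is to extract the singularities directly from an explicit formula for $p_{\sigma_l}(z)$, which for a rank-one group factorizes as $p_{\sigma_l}(z) = P_{\sigma_l}(z)\,Q_{\sigma_l}(z)$. Here $P_{\sigma_l}$ is the polynomial part that already appeared in the proof of Proposition \ref{Prop reg inf ch}, while $Q_{\sigma_l}$ is the meromorphic factor dictated by the Harish-Chandra $c$-function; for $G = \Spin(2n,1)$ (even-dimensional real hyperbolic case) this factor is of $\tanh$-type. The singular set of $p_{\sigma_l}$ will therefore consist of those poles of $Q_{\sigma_l}$ that are \emph{not} cancelled by zeros of $P_{\sigma_l}$.

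The first concrete step is to compute $P_{\sigma_l}$. Since $\m = \soe(2n-1)$ is of type $B_{n-1}$, one has $\rho_\m = \sum_{j=2}^n(n-j+\tfrac12)\varepsilon_j$, so that $\mu_{\sigma_l}+\rho_\m = \sum_{j=2}^n a_j\varepsilon_j$ with
\begin{equation*}
a_j \;=\; n-j+\tfrac32 \text{ for } 2\le j\le l+1, \qquad a_j \;=\; n-j+\tfrac12 \text{ for } l+2\le j\le n.
\end{equation*}
Only the roots $\{\varepsilon_1\}\cup\{\varepsilon_1\pm\varepsilon_j\}_{j=2}^n$ in $\Pi^+\setminus\Pi_\m^+$ carry $z$-dependence, so, up to a nonzero constant,
\begin{equation*}
P_{\sigma_l}(z) \;=\; z\,\prod_{j=2}^n\big(z^2+a_j^2\big).
\end{equation*}
The set $\{a_j\}_{j=2}^n$ equals $\{\tfrac12,\tfrac32,\ldots,n-\tfrac12\}\setminus\{n-l-\tfrac12\}$: the missing value is exactly the one skipped when $a_j$ drops by one between $j=l+1$ and $j=l+2$. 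Consequently, the zeros of $P_{\sigma_l}$ on the imaginary axis are $0$ together with $\pm i(m+\tfrac12)$ for $m\in\{0,1,\ldots,n-1\}\setminus\{n-l-1\}$.

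Next I would invoke the known shape of $Q_{\sigma_l}$: simple poles exactly at $\pm i(m+\tfrac12)$ for every $m\in\Z$, with no other singularities. Cancelling against the zeros of $P_{\sigma_l}$, every pole at $\pm i(m+\tfrac12)$ with $0\le m\le n-1$ disappears \emph{except} the one at $\pm i(n-l-\tfrac12)$, which survives. The poles at $\pm i(m+\tfrac12)$ with $m\ge n$ are never touched by $P_{\sigma_l}$; writing $m = n-1+k$ with $k\in\Z^\times_+$ and using $\rho_\alpha = n-\tfrac12$, these are precisely $\pm i(\rho_\alpha+k)$. Combining the two families yields \eqref{eq singularities p_sigma p forms}, and the definitions of $\lambda_0^l$ and $\lambda_k^l$ follow by picking the negative imaginary axis.

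The only delicate point I expect is isolating the factor $Q_{\sigma_l}$: one has to verify that, beyond $P_{\sigma_l}$, the remaining factor is $l$-independent and carries no hidden zeros or poles. This is exactly what is carried out in \cite{Mia}, with the $p$-form specialization recorded in \cite{PedTh}; it is also the reason why the sister group $\Spin(2n+1,1)$ was excluded at the outset, since there the $c$-function is itself polynomial and produces no singularities at all.
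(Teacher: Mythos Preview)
Your argument is correct and is precisely the computation that the paper defers to \cite{Mia}, \cite{PedTh} and \cite{ROBY2021}; the same ``zeros of the polynomial factor, then complement in the appropriate half-integer lattice'' strategy is spelled out verbatim in the paper's proof of Lemma~\ref{zeros Plancherel Measure tau114} for the quaternionic case. Your identification of the missing value $n-l-\tfrac12$ in the list $\{a_j\}$ and of the meromorphic factor as a $\tanh$-type function with simple poles on $i(\Z+\tfrac12)$ are exactly the two ingredients those references supply.
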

\end{minipage}

\begin{Rem}[Case $p = n$]
Recall that in this case, the vector bundle $E_n$ decomposes as a direct sum of two, one for each $K$ type $\tau^\pm_n$ in $\tau_n$. The singularities of $p_{\sigma_{n-1}}$ are given by formula \eqref{eq singularities p_sigma p forms} for $l = n-1$ and $\tau_p = \tau^\pm_n$. But as the two vector bundles are different, we will see that the same poles of the same Plancherel density can correspond to different residue representations.
\end{Rem}

\subsubsection{Residue representations}

We now have all the ingredients to study the residue representation $\E^{l}_k :=\E^{\sigma_l}_k$ described in section \ref{section residue repr}.  
The residue representations in \eqref{eq residue representation} become 
\begin{equation}
    \E_k^l := \{\varphi_{p}^{l,k} \ast f ~|~ f \in  C_c^\infty(G,\tau_p)\}~,
\end{equation}
for $l= p$ or $p-1$ and $k \in \Z_+$. Recall that $p=n$ is equivalent to $l= n-1$. To simplify notation, we set $\varphi_{p}^{l,k}:=\varphi_{\tau_p}^{\sigma_l,\lambda^l_k\alpha}$ for the spherical function and we denote the principal series representation $\Ind_{MAN}^G(\sigma_l\otimes e^{i\lambda^l_k\alpha}\otimes 1)$, in which $\E_k^l $ is embedded, by $(\Hi^l_k, \pi^l_k) :=(\Hi^{\sigma_l}_{\lambda^l_k\alpha}, \pi^{\sigma_l}_{\lambda^l_k\alpha})~.$

\begin{Prop}\label{Prop res repr pforms - Rcase}
The residue representation $\E_k^l$ is always irreducible and 
\newline
\begin{tabular}{ll}
    \textbullet~ finite dimensional with Langlands parameters $(MA,\sigma_l,\pm i\lambda_k^l\alpha)$, & if $k\in \Z_+^\times$\\
    \textbullet~ infinite dimensional, with Gelfand-Kirillov dimension $2n-1$ \\and Langlands parameters $(MA,\sigma_p,\pm i\lambda_0^p\alpha)$, & if $k=0$ and $p\ne n$\\
    \textbullet~ is the discrete series representation with Harish-Chandra parameter $\gamma_0$, & if $k=0$ in the $\tau_n^+$ case\\
    \textbullet~ is the discrete series representation with Harish-Chandra parameter $\gamma_1$, & if $k=0$ in the $\tau_n^-$ case
\end{tabular}
\end{Prop}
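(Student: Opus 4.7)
The plan is to apply the algorithm of Theorem~\ref{Thm algo} case by case to each pole $\lambda_k^l$ of each $p_{\sigma_l}$, $\sigma_l\in\hat{M}(\tau_p)$; Steps~(1)--(2) are already carried out in the preceding two subsubsections. The key preliminary computation I would establish is that the coefficients of $\mu_{\sigma_l}+\rho_\m$ along $\varepsilon_2,\ldots,\varepsilon_n$ form the set $\{1/2,3/2,\ldots,n-1/2\}\setminus\{n-l-1/2\}$, and that $|i\lambda_0^l|$ equals exactly the missing value $n-l-1/2$, while $|i\lambda_k^l|=\rho_\alpha+k>n-1/2$ for $k\geq 1$. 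In both regimes this makes every $\gamma_k^l$ regular (reconfirming Proposition~\ref{Prop reg inf ch}) and pins down its Weyl orbit explicitly.

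For $k\geq 1$, one checks that $\gamma_k^l$ reordered decreasingly is $(\rho_\alpha+k,\rho_\alpha,\rho_\alpha-1,\ldots,1/2)$, in the same dominant chamber as $\gamma_{0,1}=\rho$, with translation weight $\mu=\gamma_k^l-\gamma_{0,1}$ a dominant integral weight of $B_n$. The same multiset comparison rules out matching infinitesimal characters for the other $M$-type of $\tau_p$, so Step~(5) of the algorithm produces a single $\pi_{\delta_w}=\pi_{0,1}$ and singles out its finite-dimensional top $\overline{\pi}_{0,1}$. After applying $\Phi^{\gamma_{0,1}}_{\gamma_k^l}$, this gives the finite-dimensional Langlands quotient of $\Hi^{\sigma_l}_{\lambda_k^l}$ with parameters $(MA,\sigma_l,\pm i\lambda_k^l\alpha)$.

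For $k=0$ and $p<n$, a direct calculation gives $\gamma_0^l=\gamma_{0,l+1}$ exactly (no translation is needed), and both $M$-types $\sigma_{p-1},\sigma_p$ yield matching infinitesimal characters at their respective poles, producing two principal series $\pi_{0,p}$ and $\pi_{0,p+1}$ to compare. From Theorem~\ref{Thm Col: Spin} these share the single composition factor $\overline{\pi}_{0,p+1}$, which is the top of $\pi_{0,p+1}=\Hi^{\sigma_p}_{\lambda_0^p}$ and does not occur in any $\pi_{0,i}$ with $i\notin\{p,p+1\}$. Hence the residue, independent of whether one starts with $l=p-1$ or $l=p$, is this common Langlands quotient, with parameters $(MA,\sigma_p,\pm i\lambda_0^p\alpha)$ and, by Theorem~\ref{Thm Col: Spin}, Gelfand-Kirillov dimension $2n-1$.

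For $k=0$ and $p=n$, only the $M$-type $\sigma_{n-1}$ appears and $\Hi^{\sigma_{n-1}}_{-i/2}=\pi_{0,n}$, whose composition series has $\overline{\pi}_{0,n}$ on top and the two discrete series $\overline{\pi}_0\oplus\overline{\pi}_1$ at the bottom. The top $\overline{\pi}_{0,n}$ carries $\tau_{n-1}$ as minimal $K$-type but neither $\tau_n^+$ nor $\tau_n^-$, so Step~5(d) forces the residue to be a discrete series. The main subtlety, and the step I expect to be the delicate one, is deciding which discrete series corresponds to which $\tau_n^\pm$; for this I would invoke Parthasarathy's theorem \cite[Theorem 1]{Parthasarathy} to compute the minimal $K$-types of the discrete series with Harish-Chandra parameters $\gamma_0$ and $\gamma_1$. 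Since $\gamma_0$ and $\gamma_1$ differ only by the sign of their $\varepsilon_n$-coordinate and the highest weights $\mu_n^\pm$ of $\tau_n^\pm$ differ in exactly the same way, the criterion matches $\tau_n^+$ with $\gamma_0$ and $\tau_n^-$ with $\gamma_1$, completing the proof.
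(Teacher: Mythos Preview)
Your plan follows the same algorithm as the paper's proof, and the cases $k\geq 1$ and $k=0,\ p<n$ are handled correctly in spirit. One computational slip: your ``reordered decreasingly is $(\rho_\alpha+k,\rho_\alpha,\rho_\alpha-1,\ldots,1/2)$'' lists $n+1$ entries for an $n$-tuple. As you yourself noted just before, the coordinates of $\gamma_k^{\sigma_l}$ are $\rho_\alpha+k$ together with $\{1/2,\ldots,n-1/2\}\setminus\{n-l-1/2\}$; the missing value $n-l-1/2$ is precisely what makes the multiset comparison distinguish $l=p-1$ from $l=p$ and rule out the other $M$-type for $k\geq 1$. The conclusion (dominance in the chamber of $\gamma_{0,1}$, dominant integral translation) is unaffected.

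There is a genuine gap in the $p=n$, $k=0$ case. You assert that $\overline{\pi}_{0,n}$ ``carries $\tau_{n-1}$ as minimal $K$-type but neither $\tau_n^+$ nor $\tau_n^-$''. Knowing the minimal $K$-type does not by itself exclude $\tau_n^\pm$ as higher $K$-types of $\overline{\pi}_{0,n}$; you need an argument. The paper supplies one via the algorithm: take $w$ with $w\cdot\gamma_{0,n}=\gamma_{0,n-1}$ and compute $w\cdot\gamma_0^{\sigma_{n-1}}$, which is the infinitesimal character of the principal series induced from $\sigma_{n-2}$ at $\tfrac32\alpha$. Since $\tau_n^\pm|_M=\sigma_{n-1}$ does not contain $\sigma_{n-2}$, the $K$-types $\tau_n^\pm$ do not occur in that principal series, hence not in its subquotient $\overline{\pi}_{0,n}$. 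This forces $\tau_n^\pm$ into the discrete series factors, after which your Parthasarathy argument finishes the identification as in the paper.
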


\begin{proof}
we prove this proposition to illustrate our algorithm, even if for this special case simpler direct proof are possible. 
\begin{enumerate}
    \item and (2) were already proved before the proposition.
    \setcounter{enumi}{2}
    \item Here $\varepsilon_1$ is the real root. We get then, for $l=p,p-1$, and $k\ne0$,
    $$\gamma_k^{\sigma_l} = \left(n-\frac12+k\right)\varepsilon_1 + \sum_{i=2}^{l+1}\left(n-i+\frac32\right)\varepsilon_i+\sum_{i=l+2}^{n}\left(n-i+\frac12\right)\varepsilon_i~,$$ and 
    $$\gamma_0^{\sigma_l} = \left(n-\frac12-l\right)\varepsilon_1 + \sum_{i=2}^{l+1}\left(n-i+\frac32\right)\varepsilon_i+\sum_{i=l+2}^{n}\left(n-i+\frac12\right)\varepsilon_i~.$$
    \item If $k\ne0$, then $\gamma_k^{\sigma_l}$ is associated with $\gamma_{0,1}$ in Theorem 3. The components in $\pi_{0,1}$ are $\overline{\pi}_{0,1}$ and $\overline{\pi}_{0,2}$.  \newline
    If $k=0$ and $p\ne n$, then $\gamma_0^{\sigma_l}$ is associated with $\gamma_{0,l+1}$ in Theorem 3. The components in $\pi_{0,l+1}$ are $\overline{\pi}_{0,l+1}$ and $\overline{\pi}_{0,l+2}$.\newline
    If $k=0$ and $p= n$, then $\gamma_0^{\sigma_{n-1}}$ is associated with $\gamma_{0,n}$ in Theorem 3. The components in $\pi_{0,n}$ are $\overline{\pi}_{0,n}$, $\overline{\pi}_{0}$ and $\overline{\pi}_{1}$.
    \item If $k\ne0$, let $w\in W(\g_\C,\h_\C)$ such that $w\cdot\gamma_{0,1} = \gamma_{0,2}$. Then 
    $$w\cdot\gamma_k^{\sigma_l} = \left(n-\frac12\right)\varepsilon_1 + \left(n-\frac12+k\right)\varepsilon_2 + \sum_{i=3}^{l+1}\left(n-i+\frac32\right)\varepsilon_i+\sum_{i=l+2}^{n}\left(n-i+\frac12\right)\varepsilon_i~.$$
    This is the infinitesimal character of the principal series induced by $\left(n-\frac12\right)\alpha$ and the $M$-type with highest weight
    $(k+1)\varepsilon_2 + \sum_{i=3}^{l+1}\varepsilon_i$.
    As $k+1>1$, this $M$-type is not contained $\tau_p|_M$. Thus $\tau_p$ is in $\overline{\pi}_{0,1}$.\newline
    If $k=0$ and $p\ne n$, let $w_0, w_1\in W(\g_\C,\h_\C)$ such that $w_0\cdot\gamma_{0,p+1} = \gamma_{0,p+2}$ and $w_1\cdot\gamma_{0,p} = \gamma_{0,p+1}$. Then 
    $$w_0\cdot\gamma_0^{\sigma_p} = \left(n-p-\frac32\right)\varepsilon_{1}+ \sum_{i=2}^{p+2}\left(n-i+\frac32\right)\varepsilon_i+\sum_{i=p+3}^{n}\left(n-i+\frac12\right)\varepsilon_i~.$$
    This is the infinitesimal character of the principal series induced by $\left(n-p-\frac32\right)\alpha$ and the $M$-type $\sigma_{p+1}$. As $\tau_p|_M$ does not contain $\sigma_{p+1}$, $\tau_p$ is in $\overline{\pi}_{0,p+1}$. Moreover 
    $$w_1\cdot\gamma_0^{\sigma_p} = \left(n-p-\frac12\right)\varepsilon_{1}+ \sum_{i=2}^{p+1}\left(n-i+\frac32\right)\varepsilon_i+\sum_{i=p+2}^{n}\left(n-i+\frac12\right)\varepsilon_i~.$$ 
    This is the infinitesimal character of the principal series induced by $\left(n-p-\frac12\right)\alpha$ and the $M$-type $\sigma_{p}$. As $\tau_p|_M$ does contain $\sigma_{p}$, $\tau_p$ is in $\overline{\pi}_{0,p+1}$. 
    \newline
    If $k=0$ and $p=n$, let $w\in W(\g_\C,\h_\C)$ such that $w\cdot\gamma_{0,n} = \gamma_{0,n-1}$. 
    $$w\cdot\gamma_0^{\sigma_{n-1}} = \frac32\varepsilon_1 + \sum_{i=2}^{n-1}\left(n-i+\frac32\right)\varepsilon_i+\frac12\varepsilon_n~.$$
    This is the infinitesimal character of the principal series induced by $\frac32\alpha$ and the $M$-type $\sigma_{n-2}$. As $\tau^\pm_n|_M$ both do not contain $\sigma_{n-2}$, $\tau^\pm_n$ are $K$-types of $\pi_{0}$ or $\pi_{1}$. By direct computation and \cite[Theorem 1]{Parthasarathy}, one gets that $\tau^+_n$ is a $K$-type of $\pi_{0}$ and $\tau^-_n$ is a $K$-type of $\pi_{1}$.
    \item The Gelfand-Kirillov dimensions are given in Theorem 3. 
\end{enumerate}
\end{proof}

\begin{Cor}
The wave front sets of the infinite dimensional representations in Proposition \ref{Prop res repr pforms - Rcase} are all equal to the nilpotent orbit generated by $\g_\alpha$.
\end{Cor}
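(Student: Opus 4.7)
The plan is to invoke the wave front set machinery developed in \cite[Section 5]{ROBY2021}. The infinite-dimensional representations appearing in Proposition \ref{Prop res repr pforms - Rcase} fall into two families: the Langlands quotients $J(MA,\sigma_p,\pm i\lambda_0^p\alpha)$ for $k=0$ and $1\leq p<n$, and the two discrete series representations $\overline{\pi}_0$, $\overline{\pi}_1$ with Harish-Chandra parameters $\gamma_0$, $\gamma_1$ for $k=0$ and $p=n$. First I would combine Proposition \ref{Prop res repr pforms - Rcase} with Theorem \ref{Thm Col: Spin} to record that every representation in this list has Gelfand-Kirillov dimension $2n-1$.

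Next, since the restricted root system of $\Spin(2n,1)$ reduces to $\{\pm\alpha\}$, one has $\n=\g_\alpha$ with $\dim_\R\g_\alpha=m_\alpha=2n-1$. The nilpotent $\Ad(G)$-orbit through a non-zero element of $\g_\alpha$ is the Richardson orbit attached to the minimal parabolic $MAN$; its complex dimension equals $2(2n-1)=4n-2$. The partition classification of nilpotent orbits in $\soe(2n,1)$ ensures it is the unique non-trivial $\Ad(G)$-orbit of this dimension, a point that would be worth stating explicitly for the reader.

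The main step is then to quote the result of \cite[Section 5]{ROBY2021}, which — following the Rossmann-Howe asymptotic framework — identifies the wave front set of each residue representation $\E_k^l$ with the closure of a single nilpotent orbit of complex dimension twice the Gelfand-Kirillov dimension of the representation. For the representations in our list this forces the ambient orbit to have complex dimension $4n-2$, and by the uniqueness observed above it must coincide with the orbit generated by $\g_\alpha$, uniformly across the list.

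The only potentially delicate point is the uniqueness of the nilpotent $\Ad(G)$-orbit of dimension $4n-2$ in $\soe(2n,1)$; once this is settled via the standard partition description, the proof reduces essentially to invoking \cite[Section 5]{ROBY2021}, so no substantial obstacle is expected beyond the bookkeeping needed to treat both the continuous parameter Langlands quotients and the two discrete series $\overline{\pi}_0$, $\overline{\pi}_1$ on the same footing.
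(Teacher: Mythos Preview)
Your argument is correct but more elaborate than necessary. The paper dispatches the corollary in one line: it cites \cite[Theorem 2]{ROBY2021} for the fact that in the $\Spin(2n,1)$ case there is exactly \emph{one} nonzero nilpotent orbit (equivalently, one nonzero wave front set), namely the orbit generated by $\g_\alpha$; hence every infinite-dimensional representation automatically has this wave front set, with no need to match dimensions.

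Your route---computing the Gelfand--Kirillov dimension $2n-1$ from Theorem \ref{Thm Col: Spin}, invoking the general relation between GK dimension and orbit dimension, and then using uniqueness of the orbit of complex dimension $4n-2$---is valid and more self-contained, since it actually uses the GK dimensions already tabulated. But the uniqueness you invoke (``the unique non-trivial $\Ad(G)$-orbit of this dimension'') is in fact the stronger statement that it is the unique non-trivial orbit \emph{at all}; once you know this, the GK-dimension step is superfluous. The paper exploits this directly.
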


As there is only one nonzero wave front set (see \cite[Theorem 2]{ROBY2021}), the wave front set of infinite dimensional representations is always the same.

\subsection{Complex case : $G = \SU(n,1)$, $n\geq 2$}

We recall that $K = \Super(\U(n)\times\U(1))$ and $M=\Super(\U(n-1)\times\U(1))$.

\subsubsection*{Decomposition of the representations}
One can use the branching rules in \cite{Bald1} as in \cite{PedonpformsC}.
The representation $\tau_p$ decomposes into $K$-types $\tau_{q-k,r-k}$ as follows
\begin{equation}
    \tau_p = \bigoplus_{q+r=p}\bigoplus^{\min{(q,r)}}_{k=0} \tau_{q-k,r-k}~,
\end{equation}
where $\tau_{a,b}$ has highest weight 
\begin{equation}
    \mu_{a,b} = \sum_{i=1}^b\varepsilon_i - \sum_{k=n-a+1}^n \varepsilon_i + (a-b)\varepsilon_{n+1}~.
\end{equation}
Many $K$-types appear in the decomposition, contrary to the real case. Note that $0\leq a,b\leq p\leq n$ and $a+b \leq p\leq n$.  On $M$ we have the following decomposition of each $K$-type
\begin{equation}
    \tau_{q,r}|_M = \bigoplus_{\substack{l~=~q,q-1\\m~=~r,r-1}} \sigma_{l,m}~,
\end{equation}
where $\sigma_{l,m}$ has highest weight
\begin{equation}
    \mu_{l,m}=\sum_{i=2}^{m+1}\varepsilon_i - \sum_{k=n-l+1}^n \varepsilon_i + \frac{l-m}2 (\varepsilon_{1}+\varepsilon_{n+1})~.
\end{equation}
In the decomposition above, $\sigma_{l,m} =0$ if $\min{(l,m)} <0$ or $\max{(l,m)} > n-1$.

\subsubsection{Poles of the Plancherel density}

This is a direct computation done in \cite[Proposition 3.1 and Appendix A.2]{ROBY2021} using \cite{Mia}. These singularities have been found first by Pedon \cite{PedonpformsC}.

$\left.
\begin{minipage}{0.35\textwidth}
\definecolor{light-gray}{gray}{0.75}
\begin{center}
\begin{tikzpicture}[scale=1]
\draw [->](-2,0) to (2,0);
\draw [<-|](0,0.3) to (0,-1.825);
\draw [|-|](0,-1.875) to (0,-3.525);
\draw [|-|](0,-3.575) to (0,-4.775);
\draw [|-](0,-4.825) to (0,-6.5);
\draw (0,-3.9) node[right] {$-i\left(\frac{n+|m-l|}2+1\right)$} node{$\bullet$};
\draw (0,-3.2) node[right] {$-i\left(\frac{n-m-l}2+1\right)$} node{$\bullet$};
\draw (0,-2.7)  node{$\bullet$} node{$\bigcirc$};
\draw (0,-2.2) node[right] {$-i\left(\frac{n-m-l}2-1\right)$} node{$\bullet$};
\draw (0,-1.5)  node{$\bullet$};
\draw (0,-1) node{$\bullet$};
\draw (0,-0.5)  node{$\bullet$};
\draw (0,-4.4)  node{$\bullet$} node{$\bigcirc$};
\draw (0,-5.2) node{$\bullet$} node{$\bigcirc$};
\draw (0,-5.7)  node{$\bullet$} node{$\bigcirc$};
\draw (0,-6.2)  node{$\bullet$} node{$\bigcirc$};
\draw (2,-5.7)  node{$\bullet$} node{$\bigcirc$} node[right]{~~poles};
\end{tikzpicture}
\captionof{figure}{Singularities of the Plancherel densities for $\sigma_l$}\label{singularities}
\end{center}
\end{minipage}
\right|$
\begin{minipage}{0.02\textwidth}
\hspace{5mm}
\end{minipage}
\begin{minipage}{0.55\textwidth}
\begin{Lemma}\label{Lemma singularities p_sigma p forms}
The singularities of the Plancherel density corresponding to $\sigma_{l,m}$ are located at
\begin{equation}\label{eq singularities p_sigma p forms}
    \left\{\pm i~~\frac{n-m-l}2, ~\pm i\left(\frac{n+|m-l|}2+k\right) ~\text{ for } k\in\Z^\times_+\right\}
\end{equation}
For $k\in\Z^\times_+$, set
\begin{equation}
    \lambda^{l,m}_k = -i\left(\frac{n+|m-l|}2+k\right) ~~~\text{ and } ~~~~ \lambda^{l,m}_0 = -i~~\frac{n-m-l}2.
\end{equation}
\end{Lemma}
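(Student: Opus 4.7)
The plan is to invoke the explicit Plancherel density formula of \cite{Mia} for real rank one groups, specialize it to $\SU(n,1)$ and the $M$-type $\sigma_{l,m}$, and then read off the surviving poles after cancellation between the polynomial and trigonometric factors.

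First, one writes $p_{\sigma_{l,m}}(\lambda) = c \cdot P_{\sigma_{l,m}}(\lambda) \cdot T_{\sigma_{l,m}}(\lambda)$, where $P_{\sigma_{l,m}}$ is the polynomial already recalled in the proof of Proposition \ref{Prop reg inf ch}, namely
$$P_{\sigma_{l,m}}(\lambda) = \prod_{\eta \in \Pi^+\setminus\Pi_\m^+} \bigl(\langle \eta, \lambda\alpha\rangle^2 + \langle \eta, \mu_{l,m}+\rho_\m\rangle^2\bigr),$$
and $T_{\sigma_{l,m}}$ is the meromorphic trigonometric factor (a combination of $\tanh$ and $\coth$ of $\pi\lambda$ shifted by half-integers) whose imaginary-axis singularities lie on a discrete lattice determined by the parity of $m_{\alpha/2} = 2n-2$ and by the central character of $\sigma_{l,m}$ on $M \cap A$.

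Second, for $\SU(n,1)$ the set $\Pi^+\setminus\Pi_\m^+$ consists of $\varepsilon_1-\varepsilon_{n+1}$ together with $\varepsilon_1-\varepsilon_j$ and $\varepsilon_j-\varepsilon_{n+1}$ for $2\leq j\leq n$, as listed in the proof of Proposition \ref{Prop reg inf ch}. Using the explicit formula for $\mu_{l,m}$ and the identity $\rho_\m = \sum_{i=2}^n \tfrac{n-2i+2}{2}\varepsilon_i$, one computes each pairing $\langle \eta,\mu_{l,m}+\rho_\m\rangle$; all of these are rational with half-integer denominators. The zeros of $P_{\sigma_{l,m}}$ on the imaginary axis are therefore $\pm i\, \langle \eta, \mu_{l,m}+\rho_\m\rangle/\langle \eta,\alpha\rangle$, and most of them exactly cancel with poles of $T_{\sigma_{l,m}}$.

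Third, one tracks which poles survive after this cancellation. A single surviving pole pair $\pm i(n-m-l)/2$ arises from the contribution associated with the roots $\varepsilon_1-\varepsilon_j$ and $\varepsilon_j-\varepsilon_{n+1}$ that are \emph{not} cleared by the zeros of the polynomial part, while the infinite family $\pm i\bigl((n+|m-l|)/2 + k\bigr)$, $k \in \Z^\times_+$, consists of the poles of $T_{\sigma_{l,m}}$ lying outside the range cleared by $P_{\sigma_{l,m}}$. The appearance of $|m-l|$ rather than a signed quantity reflects the symmetry $\sigma_{l,m}\leftrightarrow\sigma_{m,l}$ of the Plancherel density, combined with the fact that the cancellation threshold depends only on how far $\mu_{l,m}+\rho_\m$ moves the pairings off the origin, not on the direction. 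The main obstacle is purely the bookkeeping in this third step: one must treat the subcases $l=m$ and $l\neq m$ separately (because the cancellation threshold differs between them), and then verify that the union of surviving points collapses into the uniform description \eqref{eq singularities p_sigma p forms}. This bookkeeping is carried out in detail in \cite[Appendix A.2]{ROBY2021}, and the resulting list matches Pedon's earlier determination in \cite{PedonpformsC}.
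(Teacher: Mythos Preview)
Your proposal is correct and follows the same approach the paper takes: the paper does not give an in-text proof but simply refers to \cite[Proposition~3.1 and Appendix~A.2]{ROBY2021} and \cite{Mia} (and credits \cite{PedonpformsC} for the original computation), which is precisely the factorization-and-cancellation procedure you sketch. Your outline is a faithful expansion of what those references do, so there is no meaningful difference to comment on.
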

\end{minipage}

\subsubsection{Residue representations}
We now have all the ingredients to study the residue representations $\E^{l,m}_k :=\E^{\sigma_{l,m}}_k$ described in section \ref{section residue repr}.  
The residue representations in \eqref{eq residue representation} become 
\begin{equation}
\E_k^{l,m} := \{\varphi_{p,q}^{l,m,k} \ast f ~|~ f \in  C_c^\infty(G,\tau_{p,q})\}~,
\end{equation}
for $l= p$ or $p-1$, $m= q$ or $q-1$ and $k \in \Z_+$. To simplify notation, we set $\varphi_{p,q}^{l,m,k}:=\varphi_{\tau_{p,q}}^{\sigma_{l,m},\lambda^{l,m}_k\alpha}$ for the spherical function and we denote the principal series representations $\Ind_{MAN}^G(\sigma_{l,m}\otimes e^{i\lambda_k^{l,m}}\otimes 1)$, in which $\E^{\sigma_{l,m}}_k$ is embedded, by $\Hi_k^{l,m}:=\Hi^{\sigma_{l,m}}_{\lambda_k^{l,m}}~.$

\begin{Prop}\label{Prop res repr pforms - Ccase}
The residue representation $\E_k$ is always irreducible and
\newline

\begin{itemize}
    \item \underline{if $k\ne0$}: finite dimensional with Langlands parameters $(MA,\sigma_{l,m},\pm i\lambda_k^{l,m}\alpha)$,
    \item \underline{if  $k=0$, $q+r\ne n$ and $qr=0$}: infinite dimensional with Gelfand-Kirillov dimension $n$ and Langlands parameters $(MA,\sigma_{q,r},\pm i\lambda_0^{q,r}\alpha)$,
    \item \underline{if $k=0$, $q+r\ne n$ and $qr\ne0$}: infinite dimensional with Gelfand-Kirillov dimension $2n-1$ and Langlands parameters $(MA,\sigma_{q,r},\pm i\lambda_0^{q,r}\alpha)$,
    \item \underline{if $k=0$ and $q+r = n$}: the discrete series representation with Harish-Chandra parameter $\gamma_r$.
\end{itemize}

     
\end{Prop}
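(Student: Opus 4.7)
The plan is to carry out the algorithm of Theorem \ref{Thm algo} with Theorem 4 of Section \ref{section Col's Thm used} (the composition series description for the trivial-infinitesimal-character principal series of $\SU(n,1)$) as the main input. Steps (1) and (2) are already settled by the branching-rule and Plancherel-pole computations above the statement, so I begin at Step (3) by expanding
\begin{equation*}
\gamma_k^{\sigma_{l,m}} = \lambda_k^{l,m}\alpha + \mu_{l,m} + \rho_\m
\end{equation*}
in the basis $\{\varepsilon_i\}_{i=1}^{n+1}$, exactly as was done in the proof of Proposition \ref{Prop res repr pforms - Rcase}. Proposition \ref{Prop reg inf ch} ensures that $\gamma_k^{\sigma_{l,m}}$ is regular, so it is $W_\C$-conjugate to a unique trivial regular character $\gamma_{i,j}$ from Theorem 4.

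For Step (4) the analysis splits according to whether $k\neq 0$ or $k=0$. When $k\neq 0$ the size of $\lambda_k^{l,m}$ makes the coordinate on the real root $\varepsilon_1-\varepsilon_{n+1}$ strictly larger than every compact coordinate of $\gamma_k^{\sigma_{l,m}}$, so the target is $\gamma_{0,1}$, whose maximal quotient is the finite-dimensional $\overline{\pi}_{0,1}$. When $k=0$ the real coordinate $(n-l-m)/2$ slides into the decreasing sequence of compact coordinates, and a direct bookkeeping identifies the target $\gamma_{i,j}$ among three sub-regimes: (a) if the $K$-type is $\tau_{q,r}$ with $qr=0$ and $q+r<n$, the target is of the form $\gamma_{0,j}$ or $\gamma_{i,1}$ with $\max(i,j)\ge 2$, whose maximal quotient is one of the ``thin'' blocks of Gelfand-Kirillov dimension $n$; (b) if $qr\neq 0$ and $q+r<n$, the target is some $\gamma_{i,j}$ with $i\ge 1$ and $j\ge 2$, whose maximal quotient has Gelfand-Kirillov dimension $2n-1$; (c) if $q+r=n$, the target is a suitable $\gamma_{i,n-i}$ whose composition series (the first diagram of Theorem 4) contains two discrete series summands.

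Step (5) is the heart of the proof and I would run it exactly as in the real case. For every $\delta=\sigma_{l',m'}\in\hat{M}(\tau_{q,r})$ I would check whether $\gamma_k^{\sigma_{l,m}}$ is $W_\C$-conjugate to some $\gamma_\nu^\delta$, transport the resulting principal series block down to the trivial infinitesimal character via Lemma \ref{Lemma phi image}, and locate the unique subquotient containing the $K$-type $\tau_{q,r}$. The complex case is multiplicity-free, which is the crucial simplification: the overlapping subquotient is unique, producing the finite-dimensional $\overline{\pi}_{0,1}$ when $k\neq 0$, a block of Gelfand-Kirillov dimension $n$ in sub-regime (a), and $\overline{\pi}_{q,r}$ in sub-regime (b). The Langlands parameters then follow from the fact that each $\overline{\pi}_{a,b}$ is by construction the unique irreducible quotient of the principal series induced from $\sigma_{a,b}$ with parameter $\lambda_0^{a,b}\alpha$, and the Gelfand-Kirillov dimensions come straight from Theorem 4.

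The main obstacle is sub-regime (c), $q+r=n$. There Theorem 4 shows that the relevant $\pi_{i,n-i}$ decomposes into a maximal quotient plus a direct sum $\overline{\pi}_r\oplus\overline{\pi}_{r+1}$ of two discrete series, and no other principal series contains $\tau_{q,r}$; the overlapping argument of Step (5) is inconclusive and must be supplemented by Step (5)(d), i.e.\ by a direct application of \cite[Theorem 1]{Parthasarathy}. That theorem identifies the discrete series containing $\tau_{q,r}$ as minimal $K$-type through minimisation of the Vogan norm \eqref{eq Vogan norm}. A direct comparison of $2\gamma_r$ and $2\gamma_{r+1}$ (given explicitly by the second family of formulas in Theorem 4) against the highest weight $\mu_{q,r}$ of $\tau_{q,r}$ selects $\gamma_r$, producing the last bullet of the proposition. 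Irreducibility of $\E_k^{l,m}$ in each case is immediate from the identifications above.
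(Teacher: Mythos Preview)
Your proposal follows the same algorithmic route as the paper and is essentially correct for $k\neq 0$ and for $k=0$ with $q+r<n$, modulo a bookkeeping slip: the correspondence between $\sigma_{l,m}$ and the trivial regular character is $\gamma_{m,l+1}$, so the overlapping subquotient carrying $\tau_{q,r}$ is $\overline{\pi}_{r,q+1}$ (the Langlands quotient attached to $\sigma_{q,r}$), not $\overline{\pi}_{q,r}$. The Gelfand--Kirillov dimension you quote then comes from $\overline{\pi}_{r,q+1}$, not from the maximal quotient of whatever $\pi_{m,l+1}$ you started in.

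The genuine divergence from the paper is in your sub-regime (c), $q+r=n$. You assert that ``no other principal series contains $\tau_{q,r}$'' and that the overlapping argument of Step~(5)(c) is inconclusive, forcing a detour through Parthasarathy. This is not so. For every $(l,m)\in\hat{M}(\tau_{q,r})$ one checks directly (from the formula for $\gamma_0^{l,m}$) that the infinitesimal character of $\Hi_0^{l,m}$ is the trivial one; hence all of these principal series sit in the \emph{same} block and can be compared. Writing out the composition series of $\pi_{r,n-r}$, $\pi_{r-1,n-r+1}$ and $\pi_{r-1,n-r}$ from Theorem~4, the only subquotient common to all three is the discrete series $\overline{\pi}_r$; the competitor $\overline{\pi}_{r+1}$ is ruled out because it also occurs in $\pi_{r,n-r-1}$, which is induced from $\sigma_{q-2,r}\notin\hat{M}(\tau_{q,r})$. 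This is exactly the argument the paper gives, and it identifies $\gamma_r$ without Parthasarathy. Your alternative would still reach the right conclusion, but the premise that the overlap test fails here is mistaken.
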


\begin{proof}\textcolor{white}{l}
\begin{enumerate}
    \item and (2) were already proved before the proposition.
    \setcounter{enumi}{2}
    \item The infinitesimal character of $\Hi_k^{l,m}$ is given by 
\begin{align*}
    \gamma_k^{l,m} &= i\lambda^{l,m}_k(\varepsilon_1-\varepsilon_{n+1}) + \mu_{l,m} + \rho_m \\
                 &= \left\{\begin{array}{cc}
                     \left(\frac{n+|m-l|}2+k\right) & \text{ if } k\in \Z_+^\times \\
                      \frac{n-m-l}2 & \text{ if } k= 0 
                 \end{array}\right\}(\varepsilon_1-\varepsilon_{n+1}) + \sum_{i=2}^{m+1}\varepsilon_i - \sum_{k=n-l+1}^n \varepsilon_i + \frac{l-m}2 (\varepsilon_{1}+\varepsilon_{n+1}) \\&+ \frac12 \sum_{i=2}^n (n-2i+2)\varepsilon_i  \\
                 &= \left\{\begin{array}{cc}
                     \left(\frac{n+|m-l|+l-m}2+k\right)\varepsilon_1 - \left(\frac{n+|m-l|-l+m}2+k\right)\varepsilon_{n+1} & \text{ if } k\in \Z_+^\times \\
                      \left(\frac{n}2-m\right) \varepsilon_1 - \left(\frac{n}2-l\right) \varepsilon_{n+1} & \text{ if } k= 0 
                 \end{array}\right\}\\&+ \sum_{i=2}^{m+1}(n/2-i+2)\varepsilon_i + \sum_{i=m+2}^{n-l}(n/2-i+1)\varepsilon_i + \sum_{i=n-l+1}^{n}(n/2-i)\varepsilon_i 
\end{align*}
\end{enumerate}
\begin{itemize}
    \item[\underline{$k=0$}:] 
    \begin{enumerate}
    \setcounter{enumi}{3}
        \item Here the regular character of $\Hi_{0}^{l,m}$ is $2\gamma_{m,l+1}$. So the corresponding trivial infinitesimal character is $\gamma_{m,l+1}$.
        \item Suppose $q+r<n$. The representation $\pi_{m,l+1}$ decomposes in 4 subquotients, namely the $\overline{\pi}_{i,j}$ with $i,j \in\{(m,l+1);(m+1,l+1);(m,l+2);(m+1,l+2)\}$ ($\overline{\pi}_{m+1,l+2}$ is replaced by $\pi_{m+1}$ if $m+l=n-2$). 
        Let $\mu$ and $w$ be as in the algorithm.
        Recall that here $(l,m)\in \{(q,r);(q-1,r);(q,r-1);(q-1,r-1)\}$. The following table gives the correspondence between each trivial infinitesimal character $\gamma_{\cdot,\cdot}$ and the $M$-type which induces the principal series representation with infinitesimal character $w\cdot\mu+ \gamma_{\cdot,\cdot}$, for $(l,m) = (q-1,r)$. The other cases are similar. 
        
        \vspace{5mm}
        \begin{center}
            \begin{tabular}{|Sc|Sc|Sc|}
            \hline
            $\gamma_{r,q}$ & $\sigma_{q-1,r}$ & contains $\tau$ \\
            $\gamma_{r+1,q}$ & $\sigma_{q-1,r+1}$ & does not contain $\tau$\\
            $\gamma_{r,q+1}$ & $\sigma_{q,r}$& contains $\tau$\\
            $\gamma_{r+1,q+1}$ & $\sigma_{q,r+1}$& does not contains $\tau$\\
            \hline
        \end{tabular}
        \end{center}
        and 
\begin{center}
\begin{tikzpicture}[scale = 1]
\draw (-1,-0.5) node  {$\pi_{r,q} =$};
\draw [] (0,0) rectangle (4,1);
\draw [] (0,0) rectangle (4,-1);
\draw [] (1.7,0) to (1.7,-1);
\draw [] (2.3,0) to (2.3,-1);
\draw [] (0,-1) rectangle (4,-2);
\draw (2,0.5) node  {$\overline{\pi}_{r,q}$};
\draw (0.8,-0.5) node  {$\overline{\pi}_{r+1,q}$};
\draw (3.2,-0.5) node  {\color{Rouge}$\overline{\pi}_{r,q+1}$};
\draw (2,-0.5) node  {$\oplus$};
\draw (2,-1.5) node  {$\overline{\pi}_{r+1,q+1}$};
\end{tikzpicture}
\begin{tikzpicture}[scale = 1]
\draw (-1,-0.5) node  {$\pi_{r,q+1} =$};
\draw [] (0,0) rectangle (4,1);
\draw [] (0,0) rectangle (4,-1);
\draw [] (1.7,0) to (1.7,-1);
\draw [] (2.3,0) to (2.3,-1);
\draw [] (0,-1) rectangle (4,-2);
\draw (2,0.5) node  {\color{Rouge}$\overline{\pi}_{r,q+1}$};
\draw (0.8,-0.5) node  {$\overline{\pi}_{r+1,q+1}$};
\draw (3.2,-0.5) node  {$\overline{\pi}_{r,q+2}$};
\draw (2,-0.5) node  {$\oplus$};
\draw (2,-1.5) node  {$\overline{\pi}_{r+1,q+2}$};
\end{tikzpicture}
\end{center}
\begin{center}
\begin{tikzpicture}[scale = 1]
\draw (-1,-0.5) node  {$\pi_{r+1,q} =$};
\draw [] (0,0) rectangle (4,1);
\draw [] (0,0) rectangle (4,-1);
\draw [] (1.7,0) to (1.7,-1);
\draw [] (2.3,0) to (2.3,-1);
\draw [] (0,-1) rectangle (4,-2);
\draw (2,0.5) node  {$\overline{\pi}_{r+1,q}$};
\draw (0.8,-0.5) node  {$\overline{\pi}_{r+2,q}$};
\draw (3.2,-0.5) node  {$\overline{\pi}_{r+1,q+1}$};
\draw (2,-0.5) node  {$\oplus$};
\draw (2,-1.5) node  {$\overline{\pi}_{r+2,q+1}$};
\end{tikzpicture}
\begin{tikzpicture}[scale = 1]
\draw (-1,-0.5) node  {$\pi_{r+1,q+1} =$};
\draw [] (0,0) rectangle (4,1);
\draw [] (0,0) rectangle (4,-1);
\draw [] (1.7,0) to (1.7,-1);
\draw [] (2.3,0) to (2.3,-1);
\draw [] (0,-1) rectangle (4,-2);
\draw (2,0.5) node  {$\overline{\pi}_{r+1,q+1}$};
\draw (0.8,-0.5) node  {$\overline{\pi}_{r+2,q+1}$};
\draw (3.2,-0.5) node  {$\overline{\pi}_{r+1,q+2}$};
\draw (2,-0.5) node  {$\oplus$};
\draw (2,-1.5) node  {$\overline{\pi}_{r+2,q+2}$};
\end{tikzpicture}
\end{center}
\item The only component which is in the two first decompositions but which is not in the two second is $\overline{\pi}_{r,q+1}$. 
And we know that this corresponds to the principal series induced by $\sigma_{q,r}$ and we can compute the $\Aa_\C^*$-part comparing the characters. 
\end{enumerate}
    If $q+r = n$, the unique subquotient which occurs in any principal series written above is the discrete series $\pi_r$. 
    \item[\underline{$k\ne0$}:] 
    \begin{enumerate}
    \setcounter{enumi}{3}
        \item Here the regular character of $\Hi_{0}^{l,m}$ correspond to the trivial infinitesimal character is $\gamma_{0,1}$.
        \item One can check that there is no other candidates which can correspond to the infinitesimal character of $\Hi_{k}^{l,m}$. This means that $\Hi_{k}^{l,m}$ is the only principal series representation in which $\E_k$ can be embedded in. Thus $\E_k$ has to be the irreducible (finite dimensional) quotient of $\Hi_{k}^{l,m}$. So $\E_k$ is $\Phi(\overline{\pi}_{0,1})$ and is finite dimensional.  
    \end{enumerate}
\end{itemize}
\begin{enumerate} \setcounter{enumi}{5}
    \item The Gelfand-Kirillov dimensions are given in Theorem 4. 
\end{enumerate}
\end{proof}

\subsection{Quaternionic case : $G = \Sp(n,1)$, $n\geq 2$}\label{section pforms Sp}

Recall that $K = \Sp(n)\times\Sp(1)$ and \newline$M=\Sp(1)\times \Sp(n-1)\times\Sp(1)$. 

\subsubsection{Decomposition of the representations}

Here the decomposition of the representation is quite long, but very well described in \cite{PedonpformsH}. We will not recall all the facts here, but just describe the example of the $p$-forms when $p=2$, corresponding to $\tau=\tau_2$ and another case with multiplicity 2, namely $\tau=\tau_{1,4,4}$, which will be defined below. 

The decomposition of $\tau_2$ on $K$ is given in \cite[Proposition 4.11]{PedonpformsH}:
\begin{equation}
    \tau_2 = \tau_{0,2,2} \oplus \tau_{0,0,2} \oplus \tau_{1,0,0}~,
\end{equation}
where $\tau_{r,s,t}$ is the $K$-type of highest weight
\begin{equation}
    \mu_{\tau_{r,s,t}} = \sum_{j=1}^{r} 2\varepsilon_j + \sum_{j=r+1}^{r+s} \varepsilon_j + t \varepsilon_{n+1}
\end{equation}
for $r,s,t \in \Z_+$ and $r+s \leq n$. 

For $a,b \in \Z_+$ with $a+b\leq n$ and $2c \in \Z_+$, let $\sigma_{a,b,c}$ be the $M$-type corresponding to the highest weight
\begin{equation}
    \mu_{\sigma_{a,b,c}} =
        c(\varepsilon_1 + \varepsilon_{n+1}) + \sum_{j=2}^{a+1} 2\varepsilon_j + \sum_{j=a+2}^{a+b+1} \varepsilon_j
\end{equation}
Every sum of the form $\sum_{j = l-1}^{l}$ has to be read as $0$. The reader has to be careful, because this highest weight is not the same as the highest weight $\mu_{a,b,c}$ used in \cite[Theorem 5.2]{PedonpformsH} for $\sigma_{a,b,c}$. A simple way to get one from the other is given by the relation
\begin{equation}
  \mu_{a,b,c} = \left\{\begin{array}{ll}
  \mu_{\sigma_{a-1,b,c}}& $if $ a>0,\\\mu_{\sigma_{a,b-1,c}}& $if $ a=0.
  \end{array}\right.
\end{equation}

\begin{Lemma}\label{Lemma decomposition tau2}
The decomposition of the $K$-types of $\tau_2$ and $\tau_{1,4,4}$ over $M$ is given by the following equations
\begin{align}
    \tau_{0,2,2}|_M &= \sigma_{0,2,1} \oplus \sigma_{0,1,1/2} \oplus \sigma_{0,1,3/2} \oplus \sigma_{0,0,1}~,\\
    \tau_{0,0,2}|_M &= \sigma_{0,0,1}~,\\
    \tau_{1,0,0}|_M &= \sigma_{1,0,0} \oplus \sigma_{0,1,1/2} \oplus \sigma_{0,0,1}~,\\
    \tau_{1,4,4}|_M &= \sigma_{1,4,2} \oplus \sigma_{1,3,3/2} \oplus \sigma_{1,3,5/2} \oplus \sigma_{1,2,2}\oplus \sigma_{0,5,3/2} \oplus \sigma_{0,5,5/2}\oplus \sigma_{0,4,1}\oplus \underline{2}\sigma_{0,4,2}\\\notag&\oplus \sigma_{0,4,3}\oplus \sigma_{0,3,3/2} \oplus \sigma_{0,3,5/2}~.
\end{align}
where 
$\sigma_{1,4,2},\sigma_{0,5,3/2}, \sigma_{0,5,5/2}$ will be deleted if $n=5$. The representation $\tau_{1,4,4}$ does not occur if $n<5$. 
\end{Lemma}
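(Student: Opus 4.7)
The strategy is to reduce the branching problem $K \downarrow M$ to the classical symplectic branching $\Sp(n) \downarrow \Sp(1) \times \Sp(n-1)$, and then to read off the multiplicities from the branching tables of \cite{Bald1} (equivalently, from \cite{PedonpformsH}). First I would make explicit the embedding: the right-hand $\Sp(1)$-factor of $M$ is identified with the $\Sp(1)$-factor of $K$ (it carries the $\varepsilon_{n+1}$-coordinate), while the first $\Sp(1)$-factor together with the $\Sp(n-1)$-factor of $M$ sits inside the $\Sp(n)$-factor of $K$ as the standard block subgroup $\Sp(1) \times \Sp(n-1) \subset \Sp(n)$ acting on the first quaternionic coordinate and on the remaining $n-1$ coordinates. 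Under this identification, $\tau_{r,s,t}|_M$ is the external tensor product of the $\Sp(n)$-branching of the $\Sp(n)$-irreducible of highest weight $\sum_{j=1}^{r}2\varepsilon_j + \sum_{j=r+1}^{r+s}\varepsilon_j$ with the untouched $\Sp(1)$-irreducible of weight $t\varepsilon_{n+1}$; the parameter $c$ in $\sigma_{a,b,c}$ then couples the first $\Sp(1)$-weight produced by the branching with the fixed weight $t\varepsilon_{n+1}$, which explains why $\mu_{\sigma_{a,b,c}}$ contains the combination $c(\varepsilon_1+\varepsilon_{n+1})$.

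Next I would apply the symplectic branching rule to each $\Sp(n)$-irreducible occurring in $\tau_{0,2,2}$, $\tau_{0,0,2}$, $\tau_{1,0,0}$, and in the four components of $\tau_{1,4,4}$. For each resulting $\Sp(1)\times\Sp(n-1)$-summand I would match its highest weight against the parameterization of $\sigma_{a,b,c}$: the integer $a$ is read from the number of coordinates $\varepsilon_j$ with $j\ge 2$ whose coefficient is $2$, the integer $b$ from those whose coefficient is $1$, and $c$ from the common coefficient of $\varepsilon_1$ and $\varepsilon_{n+1}$ (which has to agree since the $\Sp(n)$-branching does not touch the weight $t\varepsilon_{n+1}$). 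For the three $K$-types of $\tau_2$ this is a short list of low-rank branchings that reproduces the stated four, one, and three summands.

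The main obstacle is the case $\tau_{1,4,4}$, where the multiplicity-$2$ contribution $\underline{2}\sigma_{0,4,2}$ occurs. The two copies arise because the $\Sp(n)$-branching of the highest weight $2\varepsilon_1+\varepsilon_2+\varepsilon_3+\varepsilon_4+\varepsilon_5$ produces two distinct $\Sp(1)\times\Sp(n-1)$-irreducibles whose first $\Sp(1)$-weight matches the required value of $c$ (the two Littlewood-Richardson-type ways of distributing the symplectic boxes in the branching pattern). All other summands appear with multiplicity one, and this is exactly where the novelty of the quaternionic case over the real and complex ones shows up.

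Finally, the exceptional deletions at $n=5$ are a straightforward Young-diagram check: when $n=5$, the partition labelling an $\Sp(n-1)=\Sp(4)$-irreducible can have at most $4$ nonzero rows, so the $M$-types $\sigma_{1,4,2}$, $\sigma_{0,5,3/2}$, $\sigma_{0,5,5/2}$ (which would require $5$ nonzero rows in the $\varepsilon_2,\dots,\varepsilon_n$ block) do not exist; for $n<5$ the same constraint makes $\tau_{1,4,4}$ itself impossible as a $K$-type, since its highest weight already requires five $\Sp(n)$-coordinates with nonzero coefficient. Once these boundary cases are excised, what remains is the tabulation just described, completing the lemma.
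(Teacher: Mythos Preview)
Your overall strategy—a direct branching computation—is exactly the paper's one-line proof (``direct computation using \cite[Theorem 5.2]{PedonpformsH}''). However, your description of the embedding $M\hookrightarrow K$ is incorrect, and following it literally produces wrong answers.

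You assert that one $\Sp(1)$-factor of $M$ is identified with the $\Sp(1)$-factor of $K$, so that ``the $\Sp(n)$-branching does not touch the weight $t\varepsilon_{n+1}$'' and hence the coefficient $c$ on $\varepsilon_{n+1}$ should always equal $t$. But already $\tau_{0,0,2}|_M=\sigma_{0,0,1}$ contradicts this: the $\Sp(n)$-part is trivial and $t=2$, yet $c=1$, not $2$. For $\tau_{0,2,2}$ one has $t=2$ while the summands have $c\in\{1,\tfrac12,\tfrac32\}$.

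The correct picture (implicit in Pedon's theorem) is that $M\cong\Sp(n-1)\times\Sp(1)$, where this single $\Sp(1)$ sits \emph{diagonally} in the product of the first-coordinate $\Sp(1)\subset\Sp(n)$ and the $\Sp(1)$-factor of $K$; a dimension count of $M=Z_K(A)$ confirms this, and the paper's display ``$\Sp(1)\times\Sp(n-1)\times\Sp(1)$'' is at best shorthand for that embedding. The branching therefore has \emph{two} stages: first restrict the $\Sp(n)$-factor to $\Sp(1)\times\Sp(n-1)$, then apply the $\Sp(1)$ Clebsch--Gordan rule to the tensor product of the resulting $\Sp(1)$-piece with the weight-$t$ representation coming from the $\Sp(1)$-factor of $K$. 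The diagonal $\Sp(1)$-weight so obtained is $2c$, which explains why $c$ is a half-integer and varies for fixed $t$. The multiplicity $\underline{2}$ of $\sigma_{0,4,2}$ in $\tau_{1,4,4}|_M$ then arises from the interaction of the symplectic branching with this Clebsch--Gordan step, not solely from ``two Littlewood--Richardson-type fillings'' in the $\Sp(n)$-branching as you suggest. Once the embedding is corrected, your remaining remarks (the row-length constraints at $n=5$ and the nonexistence of $\tau_{1,4,4}$ for $n<5$) are fine and agree with the paper.
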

The proof is a direct computation using \cite[Theorem 5.2]{PedonpformsH}. 
The order of each decomposition does not matter and follows the list given in the Theorem we used. As written above, we chose $\tau_{1,4,4}$ as a concrete example where $\tau$ occurs  with a multiplicity bigger than 1. We could not have this in the real and complex cases, because these are multiplicity free cases. \textbf{We suppose for this case that $n>5$.}

\subsubsection{Poles of the Plancherel density}
Let $p_{a,b,c}$ be the Plancherel measure of the representation $\sigma_{a,b,c}$. 

\begin{Lemma}\label{zeros Plancherel Measure tau2}
The poles of $p_{a,b,c}$ for the representations $\sigma_{a,b,c}\in \hat{M}(\tau_2)$ are listed in the table below. 

\vspace{5mm}
\begin{center}
 \begin{tabular}{|Sc|Sc|} 
 \hline
 $\sigma\in \hat{M}(\tau_2)$ & Pole $\lambda^{a,b,c}_k$, $k\in \Z_+$\\
 \hline
  $\sigma_{0,0,1}$& $\pm i(n+\frac32+k)$\\
 \hline
$\sigma_{0,1,c}$ , with $2c =1,3$& $\begin{array}{cc}
    \pm i(n+c+\frac12+k) & \text{if } k\ne 0 \\
    \pm i(n-\frac12+c) & \text{if } k=0
\end{array}$\\ 
 \hline
 $\sigma_{1,0,0}$&$\begin{array}{cc}
    \pm i(n+\frac32+k) & \text{if } k\ne 0 \\
    \pm i(n-\frac12) & \text{if } k= 0
\end{array}$\\ 
  \hline
$\sigma_{0,2,1}$ & $\begin{array}{cc}
    \pm i(n+\frac32+k) & \text{if } k\ne 0 \\
    \pm i(n-\frac12) & \text{if } k= 0
\end{array}$\\ 
 \hline
\end{tabular}
\end{center}
\end{Lemma}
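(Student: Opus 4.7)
The proof is a direct application of the general Plancherel density formula for rank-one real semisimple groups, specialized to $G=\Sp(n,1)$ and to the four specific $M$-types appearing in $\hat{M}(\tau_2)$ listed in Lemma \ref{Lemma decomposition tau2}. The formula, established by Miatello and recorded in \cite[Proposition 3.1]{ROBY2021}, expresses the (meromorphically extended) Plancherel density in the form
\begin{equation*}
    p_{a,b,c}(\lambda) \;=\; C\cdot P_{a,b,c}(\lambda)\cdot T_{a,b,c}(\lambda),
\end{equation*}
where $P_{a,b,c}$ is the polynomial factor indexed by $\Pi^+\setminus\Pi_\m^+$ (the same polynomial that appears in the proof of Proposition \ref{Prop reg inf ch}) and $T_{a,b,c}$ is a product of $\tanh$/$\coth$-type factors associated with the restricted roots $\alpha/2$ (multiplicity $4n-4$) and $\alpha$ (multiplicity $3$). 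The singularities of $p_{a,b,c}$ come from poles of $T_{a,b,c}$ on the imaginary axis, possibly cancelled by zeros of $P_{a,b,c}$.

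The plan is then to substitute, for each of the four $M$-types, the explicit highest weights $\mu_{\sigma_{a,b,c}}$ from the formula for $\mu_{\sigma_{a,b,c}}$ into $\mu_\sigma+\rho_\m$, and to match the polynomial and trigonometric factors. I would treat $\sigma_{0,0,1}$ as the baseline case: here $\mu_\sigma+\rho_\m$ has no component along $\varepsilon_2$, so $P_{0,0,1}$ has no zero on $i\R$ that could cancel a singularity of $T_{0,0,1}$; hence all the ``generic'' poles $\pm i(\rho_\alpha+1+k)=\pm i(n+3/2+k)$, $k\in\Z_+$, survive. For $\sigma_{0,1,c}$ with $2c\in\{1,3\}$, $\sigma_{1,0,0}$ and $\sigma_{0,2,1}$, the highest weight gains a nonzero component along $\varepsilon_2$ (and in the last case along $\varepsilon_3$ as well), which introduces a linear factor in $P_{a,b,c}$ vanishing at an imaginary point on the negative imaginary axis. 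This factor simultaneously (i) cancels the ``generic'' $k=0$ pole of $T_{a,b,c}$ and (ii) produces the sporadic pole $\pm i(n-1/2+c)$ (resp.\ $\pm i(n-1/2)$) listed in the statement, while leaving the higher-$k$ poles $\pm i(n+c+1/2+k)$, $k\in\Z^\times_+$ (resp.\ $\pm i(n+3/2+k)$), unchanged.

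The arithmetic is routine but requires careful bookkeeping. The expected main obstacle is not conceptual but organizational: keeping track of which zero of $P_{a,b,c}$ cancels which pole of $T_{a,b,c}$, and verifying that exactly one sporadic pole survives at the claimed location. A convenient packaging is to group the roots of $\Pi^+\setminus\Pi_\m^+$ into pairs $\{\varepsilon_1-\varepsilon_j,\varepsilon_1+\varepsilon_j\}$, $\{\varepsilon_2\pm\varepsilon_j\}$, $\{2\varepsilon_1\}$, $\{2\varepsilon_2\}$ and to use the factorization
\begin{equation*}
    P_\sigma(z)\;=\;\prod_{\eta\in\Pi^+\setminus\Pi_\m^+}\!\!\bigl(\langle\eta,z\alpha\rangle^2+\langle\eta,\mu_\sigma+\rho_\m\rangle^2\bigr)
\end{equation*}
already used in the proof of Proposition \ref{Prop reg inf ch}, which makes the cancellations transparent because the zeros of each quadratic factor are pure imaginary and easily read off from the $\varepsilon_j$-components of $\mu_\sigma+\rho_\m$.
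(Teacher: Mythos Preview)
Your approach is essentially the same as the paper's: both invoke the Miatello/\cite{ROBY2021} formula, split the Plancherel density into a polynomial factor and a trigonometric factor, and read off the surviving poles by locating the zeros of the polynomial part and taking the complement in the appropriate half-integer lattice. The paper organizes this a bit more uniformly --- it writes down once and for all the arithmetic progression of zeros of $P_{a,b,c}$ as a function of $(a,b,c)$ (with three cases according to whether $a=b=0$, $a=0\ne b$, or $a\ne 0$) and then declares the poles to be the complement --- whereas you go $M$-type by $M$-type; but the content is the same.

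One small imprecision worth fixing in your write-up: in your step~(ii) you say the extra linear factor of $P_{a,b,c}$ ``produces the sporadic pole''. Polynomial factors can only \emph{cancel} poles of $T_{a,b,c}$, never create them. What actually happens for $\sigma_{0,1,c}$, $\sigma_{1,0,0}$, $\sigma_{0,2,1}$ is that the shift of $\mu_\sigma+\rho_\m$ along $\varepsilon_2$ (and $\varepsilon_3$) moves one of the zeros of $P_{a,b,c}$ away from the point $\pm i(n-\tfrac12+c)$ (resp.\ $\pm i(n-\tfrac12)$), so the trigonometric pole already sitting there is no longer cancelled and survives as the listed $k=0$ pole. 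With that wording corrected, your argument is complete.
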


\begin{Lemma}\label{zeros Plancherel Measure tau114}
The poles of $p_{a,b,c}$ for the representations $\sigma_{a,b,c}\in \hat{M}(\tau_{1,4,4})$ are listed in the table below. 

\vspace{5mm}
\begin{center}
 \begin{tabular}{|Sc|Sc|} 
 \hline
 $\sigma\in \hat{M}(\tau_{1,1,4})$ & Pole $\lambda^{a,b,c}_k$, $k\in \Z_+$\\
 \hline
  $\sigma_{1,4,2}$& $\begin{array}{cc}
    \pm i(n+\frac52+k) & \text{if } k\ne 0,1 \\
    \pm i\left(n-\cfrac52\right) & \text{if } k= 0\\
    \pm i\left(n+\cfrac52\right) & \text{if } k= 1
\end{array}$\\
 \hline
$\sigma_{1,3,c}$ , with $2c =3,5$& $\begin{array}{cc}
    \pm i(n+c+\frac12+k) & \text{if } k\ne 0,1\\
    \pm i\left(n-\cfrac72+c\right) & \text{if } k= 0\\
    \pm i\left(n+\cfrac12+c\right) & \text{if } k= 1
\end{array}$\\ 
 \hline
 $\sigma_{1,2,2}$& $\begin{array}{cc}
    \pm i(n+\frac52+k) & \text{if } k\ne 0,1 \\
    \pm i\left(n-\cfrac12\right) & \text{if } k= 0\\
    \pm i\left(n+\cfrac52\right) & \text{if } k= 1
\end{array}$\\ 
  \hline
$\sigma_{0,5,c}$, with $2c =3,5$ & $\begin{array}{cc}
    \pm i(n+c+\frac12+k) & \text{if } k\ne 0 \\
    \pm i(n-2) & \text{if } k= 0 \text{ and } c=5/2
\end{array}$\\ 
 \hline
$\sigma_{0,4,c}$, with $c =1,2,3$ & $\begin{array}{cc}
    \pm i(n+c+\frac12+k) & \text{if } k\ne 0 \\
    \pm i(n-\frac72+c) & \text{if } k=0 \text{ and } c=2,3
\end{array}$\\
 \hline
$\sigma_{0,3,c}$, with $2c =3,5$ & $\begin{array}{cc}
    \pm i(n+\frac12+k+c) & \text{if } k\ne 0 \\
    \pm i(n+c-\frac52) & \text{if } k= 0
\end{array}$\\
 \hline
\end{tabular}
\end{center}
\end{Lemma}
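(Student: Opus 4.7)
The plan is to proceed as in the proof of the analogous Lemma \ref{zeros Plancherel Measure tau2} for $\tau_2$, applying the explicit formula for the Plancherel density of $\Sp(n,1)$ from \cite{Mia} (recalled in \cite[Appendix A.3]{ROBY2021}) to each of the eleven $M$-types $\sigma_{a,b,c}$ listed in Lemma \ref{Lemma decomposition tau2}. For each triple $(a,b,c)$, I would substitute the explicit highest weight
$$\mu_{\sigma_{a,b,c}} = c(\varepsilon_1+\varepsilon_{n+1}) + \sum_{j=2}^{a+1}2\varepsilon_j + \sum_{j=a+2}^{a+b+1}\varepsilon_j$$
and $\rho_\m = \sum_{j=3}^{n+1}(n+2-j)\varepsilon_j$ into Miatello's formula, and then read off the zeros of the denominator of $p_{a,b,c}$ that are not cancelled by the zeros of its numerator. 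The polynomial part is given by Proposition \ref{Prop reg inf ch}.

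The output has the following structure. The generic infinite family $\pm i(n+c+\frac{1}{2}+k)$ for large $k$ comes from the $\coth$-type factor and depends only on $c$. The low-lying ``special'' poles correspond to a finite number of uncancelled simple poles at the locations $\pm i\langle\eta,\mu_\sigma+\rho_\m\rangle/\langle\eta,\alpha\rangle$ for $\eta\in\Pi^+\setminus\Pi_\m^+$. The new feature, compared with Lemma \ref{zeros Plancherel Measure tau2}, is that for the four $M$-types with $a=1$, namely $\sigma_{1,4,2}$, $\sigma_{1,3,3/2}$, $\sigma_{1,3,5/2}$ and $\sigma_{1,2,2}$, \emph{two} such special poles survive (at $k=0$ and $k=1$) instead of one. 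This is traceable to the extra summand $2\varepsilon_2$ appearing in $\mu_{\sigma_{a,b,c}}+\rho_\m$ when $a=1$, which modifies two of the relevant inner products and creates an additional uncancelled simple pole. For $a=0$, whether or not a low-lying pole survives depends delicately on whether the corresponding inner product matches a zero of the numerator; this explains, for instance, why $\sigma_{0,4,1}$ has no special pole whereas $\sigma_{0,4,2}$ and $\sigma_{0,4,3}$ do.

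The main obstacle is the sheer volume of the case-by-case bookkeeping rather than any conceptual difficulty: for each of the eleven $\sigma_{a,b,c}$ one must enumerate the zeros of both the numerator and the denominator of $p_{a,b,c}$ and verify every cancellation. The assumption $n>5$ plays two roles in this analysis: it guarantees that $\sigma_{1,4,2}$, $\sigma_{0,5,3/2}$ and $\sigma_{0,5,5/2}$ genuinely occur in $\hat{M}(\tau_{1,4,4})$ (see Lemma \ref{Lemma decomposition tau2}), and it rules out accidental cancellations between low-lying denominator zeros and numerator zeros coming from inner products $\langle\varepsilon_2\pm\varepsilon_j,\mu_\sigma+\rho_\m\rangle$ with $j\geq 3$. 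Once these computations are carried out systematically, the tabulated poles are read off directly, completing the proof.
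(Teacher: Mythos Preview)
Your proposal is correct and follows the same method as the paper: both apply Miatello's explicit Plancherel density formula for $\Sp(n,1)$ and determine the poles as the lattice points not killed by the zeros of the polynomial part. The only difference is presentational: the paper packages the computation into a single closed formula for the zeros of the polynomial part in terms of the parameters $(a,b,c)$ (with three cases according to whether $a=b=0$, $a=0\neq b$, or $a\neq 0$), whereas you propose to run through the eleven $M$-types individually; but this is the same calculation organized differently, not a different argument.
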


\begin{proof}
Consider the arithmetic sequence with a common difference of 1 : 
\begin{equation*}
    \pm i \left(-c-\frac32 +n+{\tiny\left\{\begin{array}{c}
         0  \\ 1\\2
    \end{array}\right\}}\right)…\pm i \left(-c+\frac12 -n-{\tiny\left\{\begin{array}{c}
         0  \\ 1\\2
    \end{array}\right\}}\right)~,
\end{equation*}
where we choose $\left\{\begin{array}{ll}
    0 & \text{if } a=b=0 \\
    1 & \text{if } a=0 \text{ and } b\ne0\\
    2 & \text{if } a\ne 0 \\
\end{array}\right.$.
Moreover, in the case when we choose 1, we remove the sequence by the 4 numbers $\pm i\left(-c-\frac12+n-a-b\right)$ and $\pm i\left(-c-\frac12-n+a+b\right)$ (here $a$ is 0). In the case when we choose 2, we also remove, in addition to the previous 4, the 4 numbers $\pm i\left(-c+\frac12+n-a\right)$ and $\pm i\left(-c-\frac32-n+a\right)$.
Using \cite[Theorem 3.1]{Mia} or \cite[Proposition 3.1]{ROBY2021}, one can prove that the zeros of the polynomial part of $p_{a,b,c}$ are given by these removed sequences. The complement of these zeros in $i\Z$ or in $i\left(\Z+\frac12\right)$, depending where the zeros are, is the set of the poles of $p_{a,b,c}$.  
\end{proof}

In the following, we make the proof for $\sigma_{0,4,2}$ which is the only component with multiplicity 2. The other cases are similar to those considered in the real and the complex cases.

\subsubsection{Proof for $\sigma_{0,4,2}$}
We now have all the ingredients to study the residue representation $\E^{0,4,2}_k :=\E^{\sigma_{0,4,2}}_k$ described in section \ref{section residue repr}.  
The residue representations in \eqref{eq residue representation} become 
\begin{equation}
\E_k^{0,4,2} := \{\varphi_{1,4,4}^{0,4,2,k} \ast f ~|~ f \in  C_c^\infty(G,\tau_{p,q})\}~,
\end{equation}
for $k \in \Z_+$. To simplify notations, we set $\varphi_{1,4,4}^{0,4,2,k}:=\varphi_{\tau_{1,4,4}}^{\sigma_{0,4,2},\lambda^{0,4,2}_k\alpha}$  for the spherical function and we will denote the principal series representations $\Ind_{MAN}^G(\sigma_{0,4,2}\otimes e^{i\lambda_k^{0,4,2}}\otimes 1)$, in which $\E_k^{0,4,2}$ is embedded, by $\Hi_k^{0,4,2}:=\Hi^{\sigma_{0,4,2}}_{\lambda_k^{0,4,2}}~.$

\begin{enumerate}
    \item and (2) were already proved before.
    \setcounter{enumi}{2}
    \item The infinitesimal character of $\Hi_k^{0,4,2}$ is given by 
\begin{align*}
    \gamma_k^{0,4,2} &= i\lambda^{0,4,2}_k(\varepsilon_1+\varepsilon_{2}) + \mu_{\sigma_{0,4,2}} + \rho_m \\
                 &= \left\{\begin{array}{cc}
                     (n+k+5)\varepsilon_1 + (n+k)\varepsilon_{2} & \text{ if } k\in \Z_+^\times \\
                      \left(n+1\right) \varepsilon_1 + \left(n-4\right) \varepsilon_2 & \text{ if } k= 0 
                 \end{array}\right\}+ \sum_{i=3}^{6}(n-i+3)\varepsilon_i + \sum_{i=7}^{n+1}(n-i+2)\varepsilon_i
\end{align*}
\end{enumerate}

It is immediate to see that for $k\geq 2$, $\gamma_k^{0,4,2}$ corresponds to $\gamma_{0,1}$ by Theorem 5 and cannot be send by an element of $W(\g_\C,\h_\C)$, to a regular character induced by an other representation of $\hat{M}(\tau_{1,4,4})$.  

\noindent\underline{\bf k=1}: 
\begin{enumerate}
\setcounter{enumi}{3}
    \item The functors send $\gamma_k^{0,4,2}$ to the trivial character $\gamma_{0,1}$.
    \item  $\gamma_k^{0,4,2}$ has just one candidate: $\Ind_{MAN}^G(\sigma_{1,3,5/2}\otimes e^{(n+3)\alpha}\otimes 1)$ with infinitesimal charcter $\gamma_{(n+3)\alpha}^{1,3,5/2}$.
Moreover, we have :
\begin{center}
\begin{tikzpicture}[scale = 0.7]
\draw (-1.4,-0.5) node  {$\pi_{0,1} =$};
\draw [] (0,0) rectangle (3,1);
\draw [] (0,0) rectangle (3,-1);
\draw [] (0,-1) rectangle (3,-2);
\draw [] (0.2,0) to (0.2,-1);
\draw [] (0.4,-2) to (0.4,-1);
\draw (1.5,0.5) node  {$\overline{\pi}_{0,1}$};
\draw (1.7,-0.5) node  {\color{Rouge}$\overline{\pi}_{0,2}$};
\draw (1.9,-1.5) node  {$\overline{\pi}_{2,3}$};
\end{tikzpicture}\hspace{2cm}
    \begin{tikzpicture}[scale = 0.7]
\draw (-1.5,-0.5) node  {$\pi_{0,2} =$};
\draw [] (0,0) rectangle (9,1);
\draw [] (0,0) rectangle (9,-1);
\draw [] (3.4,0) to (3.4,-1);
\draw [] (4,0) to (4,-1);
\draw [] (6.7,0) to (6.7,-1);
\draw [] (6.1,0) to (6.1,-1);
\draw [] (0,-1) rectangle (9,-2);
\draw (4.5,0.5) node  {\color{Rouge}$\overline{\pi}_{0,2}$};
\draw (1.8,-0.5) node  {$\overline{\pi}_{2,3}$};
\draw (5.1,-0.5) node  {$\overline{\pi}_{0,3}$};
\draw (7.8,-0.5) node  {$\overline{\pi}_{1,2}$};
\draw (3.7,-0.5) node  {$\oplus$};
\draw (6.4,-0.5) node  {$\oplus$};
\draw (4.5,-1.5) node  {$\overline{\pi}_{1,3}$};
\end{tikzpicture}
\end{center}

So $\E_1^{0,4,2}$ corresponds to $\overline{\pi}_{0,2}$. It cannot be $\overline{\pi}_{2,3}$, because this one appears in another principal series (namely $\pi_{2,3}$), which does not contain $\E_1^{0,4,2}$. 
Here the two $K$-types $\tau_{1,4,4}$ appear in the same component, and $\E_1^{0,4,2}$ has Langlands parameters $(MA,\sigma_{1,3,5/2},(n+3)\alpha)$. 
\end{enumerate}

\noindent\underline{\bf k=0}: 
\begin{enumerate}
\setcounter{enumi}{3}
    \item The functors send $\gamma_k^{0,4,2}$ to the trivial character $\gamma_{0,5}$.
    \item Here there are four different $\sigma$ which are potential candidates. The following table gives all the information about the other principal series with the ‘‘same’’ infinitesimal characters and induced by a $M$-type in $\hat{M}(\tau_{1,4,4})$. 

\vspace{2mm}
\hspace{-20mm}
\begin{tabular}{|Sc|Sc|Sc|Sc|Sc|}
\hline
$\sigma_{0,4,2}$& $\sigma_{1,4,2}$&$\sigma_{1,3,3/2}$&$\sigma_{0,5,5/2}$&$\sigma_{0,3,3/2}$\\
\hline
$\gamma_{0,5}$& $\gamma_{1,6}$& $\gamma_{1,5}$& $\gamma_{0,6}$& $\gamma_{0,4}$\\
\hline
\begin{tikzpicture}[scale = 0.5]
\draw (-1,-0.5) node  {$\pi_{0,5} =$};
\draw [] (0,0) rectangle (4,1);
\draw [] (0,0) rectangle (4,-1);
\draw [] (1.7,0) to (1.7,-1);
\draw [] (2.3,0) to (2.3,-1);
\draw [] (0,-1) rectangle (4,-2);
\draw (2,0.5) node  {\color{Bleu}$\overline{\pi}_{0,5}$};
\draw (0.8,-0.5) node  {$\overline{\pi}_{1,5}$};
\draw (3.2,-0.5) node  {$\overline{\pi}_{0,6}$};
\draw (2,-0.5) node  {$\oplus$};
\draw (2,-1.5) node  {\color{Rouge}$\overline{\pi}_{1,6}$};
\end{tikzpicture}
& \begin{tikzpicture}[scale = 0.5]
\draw (-1,-0.5) node  {$\pi_{1,6} =$};
\draw [] (0,0) rectangle (4,1);
\draw [] (0,0) rectangle (4,-1);
\draw [] (1.7,0) to (1.7,-1);
\draw [] (2.3,0) to (2.3,-1);
\draw [] (0,-1) rectangle (4,-2);
\draw (2,0.5) node  {\color{Rouge}$\overline{\pi}_{1,6}$};
\draw (0.8,-0.5) node  {$\overline{\pi}_{2,6}$};
\draw (3.2,-0.5) node  {$\overline{\pi}_{1,7}$};
\draw (2,-0.5) node  {$\oplus$};
\draw (2,-1.5) node  {$\overline{\pi}_{2,7}$};
\end{tikzpicture}
&\begin{tikzpicture}[scale = 0.5]
\draw (-1,-0.5) node  {$\pi_{1,5} =$};
\draw [] (0,0) rectangle (4,1);
\draw [] (0,0) rectangle (4,-1);
\draw [] (1.7,0) to (1.7,-1);
\draw [] (2.3,0) to (2.3,-1);
\draw [] (0,-1) rectangle (4,-2);
\draw (2,0.5) node  {$\overline{\pi}_{1,5}$};
\draw (0.8,-0.5) node  {$\overline{\pi}_{2,5}$};
\draw (3.2,-0.5) node  {\color{Rouge}$\overline{\pi}_{1,6}$};
\draw (2,-0.5) node  {$\oplus$};
\draw (2,-1.5) node  {$\overline{\pi}_{2,6}$};
\end{tikzpicture}
&\begin{tikzpicture}[scale = 0.5]
\draw (-1,-0.5) node  {$\pi_{0,6} =$};
\draw [] (0,0) rectangle (4,1);
\draw [] (0,0) rectangle (4,-1);
\draw [] (1.7,0) to (1.7,-1);
\draw [] (2.3,0) to (2.3,-1);
\draw [] (0,-1) rectangle (4,-2);
\draw (2,0.5) node  {$\overline{\pi}_{0,6}$};
\draw (0.8,-0.5) node  {\color{Rouge}$\overline{\pi}_{1,6}$};
\draw (3.2,-0.5) node  {$\overline{\pi}_{0,7}$};
\draw (2,-0.5) node  {$\oplus$};
\draw (2,-1.5) node  {$\overline{\pi}_{1,7}$};
\end{tikzpicture}
&\begin{tikzpicture}[scale = 0.5]
\draw (-1,-0.5) node  {$\pi_{0,4} =$};
\draw [] (0,0) rectangle (4,1);
\draw [] (0,0) rectangle (4,-1);
\draw [] (1.7,0) to (1.7,-1);
\draw [] (2.3,0) to (2.3,-1);
\draw [] (0,-1) rectangle (4,-2);
\draw (2,0.5) node  {$\overline{\pi}_{0,4}$};
\draw (0.8,-0.5) node  {$\overline{\pi}_{1,4}$};
\draw (3.2,-0.5) node  {\color{Bleu}$\overline{\pi}_{0,5}$};
\draw (2,-0.5) node  {$\oplus$};
\draw (2,-1.5) node  {$\overline{\pi}_{1,5}$};
\end{tikzpicture}\\
\hline
\end{tabular}
\vspace{2mm}

The two $K$-types $\tau_{1,4,4}$ are embedded in two different components, namely $\overline{\pi}_{0,5}$ and $\overline{\pi}_{1,6}$. The choice is made because these are the only maximal subquotients which appear in $\Hi^{0,4,2}_0$. The component $\overline{\pi}_{1,6}$ is obligatory because it is the only one which appears in $\pi_{1,6}$. Then
it cannot be $\overline{\pi}_{1,5}$ (or $\overline{\pi}_{0,6}$), because the multiplicity of $\tau_{1,4,4}$ is one in $\pi_{1,5}$ (or $\pi_{0,6}$). 
\end{enumerate}

\subsubsection{The residue representations}

We denote then the residue representation by $\E^{a,b,c}_k$. We recall that, for $\tau$ a $K$-type of $\tau_2$, this is the left action of $G$ on the space 
\begin{equation}
    \E_k^{a,b,c} := \{\varphi_{\tau}^{\sigma_{a,b,c},\lambda_k^{a,b,c}} \ast f ~|~ f \in  C_c^\infty(G,\tau)\}~.
\end{equation}

\begin{Prop}[For $\tau_{0,2,2}$]\label{Prop res repr pforms - tau022}
The residue representation $\E^{a,b,c}_k$ is always irreducible and:

\begin{itemize}
    \item \underline{if $k\ne0$}: finite dimensional with Langlands parameters $(MA,\sigma_{a,b,c},\pm i\lambda_k^{a,b,c}\alpha)$,
    \item \underline{if $k = 0$ and $(a,b,c)\in\{(0,0,1); ~(0,1,3/2)\}$}: infinite dimensional with Gelfand-Kirillov dimension $2n+1$ and Langlands parameters $(MA,\sigma_{0,1,3/2},\pm i\lambda_0^{0,1,3/2}\alpha)$,
    \item \underline{if $k = 0$ and $(a,b,c)\in\{(0,2,1); ~(0,1,1/2)\}$}: infinite dimensional with Gelfand-Kirillov dimension $2n+1$ and Langlands parameters $(MA,\sigma_{0,2,1},\pm i\lambda_0^{0,2,1}\alpha)$.
\end{itemize}
\end{Prop}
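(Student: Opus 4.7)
The plan is to apply the algorithm of Theorem~\ref{Thm algo} directly, following the template of the $\sigma_{0,4,2}$ case treated above. The decisive simplification here is that the decomposition $\tau_{0,2,2}|_M = \sigma_{0,2,1} \oplus \sigma_{0,1,1/2} \oplus \sigma_{0,1,3/2} \oplus \sigma_{0,0,1}$ from Lemma~\ref{Lemma decomposition tau2} is multiplicity free, so the final matching step needs to identify at most one common Langlands quotient per case, with no disambiguation issue like the one encountered for $\tau_{1,4,4}$.

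First I would handle the generic poles $k \geq 1$. For each of the four $M$-types, formula~\eqref{eq inf ch princ series} gives an infinitesimal character $\gamma_k^{a,b,c}$ which, once written in fundamental weight coordinates and reduced modulo $W_\C$ via Theorem~\ref{Thm Col: red inf ch}, corresponds to the trivial regular character $\gamma_{0,1}$ of Theorem~\ref{Thm Col: Sp}. The composition series of $\pi_{0,1}$ has only two constituents: the finite-dimensional $\overline{\pi}_{0,1}$ as maximal quotient and the infinite-dimensional $\overline{\pi}_{0,2}$ as submodule. A routine check that no other $\delta \in \hat{M}(\tau_{0,2,2})$ yields a principal series at some parameter in the same $W_\C$-orbit then shows $\Hi_k^{a,b,c}$ is the unique principal series in which $\E_k^{a,b,c}$ can be embedded, so the residue representation must be the finite-dimensional quotient $\overline{\pi}_{0,1}$, yielding the stated Langlands parameters.

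For the exceptional poles $k=0$, I would compute the four infinitesimal characters $\gamma_0^{a,b,c}$ and expect them to pair into two $W_\C$-orbits---one containing $\gamma_0^{0,0,1}$ and $\gamma_0^{0,1,3/2}$, the other containing $\gamma_0^{0,2,1}$ and $\gamma_0^{0,1,1/2}$---thereby explaining the coincidences in the stated Langlands parameters. For each pair, Theorem~\ref{Thm Col: Sp} produces a trivial regular character $\gamma_{i,j}$; one then writes down the composition series of each of the two paired principal series, uses the branching rules and Lemma~\ref{Lemma phi image} to locate the $K$-type $\tau_{0,2,2}$ inside each, and identifies the unique Langlands quotient appearing in both (since $q+r \neq n$ for these $(a,b,c)$, no discrete series intervenes in the identification). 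The Gelfand-Kirillov dimension $2n+1$ follows immediately from Theorem~\ref{Thm Col: Sp}, since every quotient $\overline{\pi}_{0,j}$ with $j \geq 2$ has this dimension.

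The main obstacle will be the last pairing step: correctly pinpointing in each pair which single subquotient supports $\tau_{0,2,2}$, and checking that this forces the residue representation to be the Langlands quotient of the principal series induced from $\sigma_{0,1,3/2}$ (respectively $\sigma_{0,2,1}$) rather than from its partner. This requires, for each pair, tracking the action of the relevant Weyl group element $w$ on highest weights and decomposing the translated $M$-types to verify that $\tau_{0,2,2}$ appears in exactly the expected component of each composition series---exactly the bookkeeping done for $\sigma_{0,4,2}$ above, now performed for two pairs of principal series in parallel.
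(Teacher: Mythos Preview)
Your approach is the paper's approach: apply the algorithm of Theorem~\ref{Thm algo}, exactly as modelled on the $\sigma_{0,4,2}$ computation and on the real and complex cases. The paper in fact gives no separate proof of this proposition; it states that the multiplicity-free cases are ``similar to those considered in the real and the complex cases'', so your proposal is precisely what is intended.

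One factual slip to fix: for $G=\Sp(n,1)$ the principal series $\pi_{0,1}$ does \emph{not} have only two constituents. By Theorem~\ref{Thm Col: Sp} (the $j-i=1$ pattern, or the $\pi_{n-2,n-1}$ pattern when $n=2$) its composition series has three layers, $\overline{\pi}_{0,1}$, $\overline{\pi}_{0,2}$, $\overline{\pi}_{2,3}$. This does not affect your conclusion---the residue is still the finite-dimensional maximal quotient $\overline{\pi}_{0,1}$ once you have verified there is no other candidate $\delta_w$---but the sentence as written is wrong and would mislead a reader checking against Theorem~\ref{Thm Col: Sp}. Also, in the $k=0$ step, each member of a pair corresponds to its \emph{own} trivial regular character $\gamma_{i,j}$ (e.g.\ one to $\gamma_{0,2}$ and the other to $\gamma_{0,3}$), not a common one; Lemma~\ref{Lemma phi image} is what lets you compare their subquotients. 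Your parenthetical ``since $q+r\neq n$'' borrows notation from the $\SU(n,1)$ case; the correct check here is simply that none of the relevant $\gamma_{i,j}$ is of the form $\gamma_{i,2n-i}$, so no discrete series constituent enters.
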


\begin{Prop}[For $\tau_{0,0,2}$]\label{Prop res repr pforms - tau002}
The residue representation $\E^{a,b,c}_k$ is always irreducible and finite dimensional with Langlands parameters $(MA,\sigma_{0,0,1},\pm i\lambda_k^{0,0,1}\alpha)$.
\end{Prop}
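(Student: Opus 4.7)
The plan is to apply the algorithm of Theorem \ref{Thm algo} with $\tau = \tau_{0,0,2}$; the situation is particularly clean because $\hat{M}(\tau_{0,0,2})$ is a singleton and no exceptional Plancherel pole arises.

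From Lemma \ref{Lemma decomposition tau2} (step (1)), $\hat{M}(\tau_{0,0,2}) = \{\sigma_{0,0,1}\}$, and from Lemma \ref{zeros Plancherel Measure tau2} (step (2)) the negative poles of the associated Plancherel density are $\lambda_k^{0,0,1} = -i(n + 3/2 + k)$ for every $k \in \Z_+$. In contrast with the $\tau_{0,2,2}$ case treated in Proposition \ref{Prop res repr pforms - tau022}, no pole of the form $-i(n - 1/2 - l)$ appears here, which is the structural reason the answer should be finite-dimensional for every $k$.

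For steps (3) and (4), I compute the infinitesimal character $\gamma_k^{0,0,1}$ via \eqref{eq inf ch princ series} in the same way as was done for $\sigma_{0,4,2}$ in Section \ref{section pforms Sp}, and verify that, for every $k \in \Z_+$, its image under the translation functor of Theorem \ref{Thm Col: red inf ch} is the trivial regular character $\gamma_{0,1}$ of Theorem \ref{Thm Col: Sp}. By that theorem, the composition series of $\pi_{0,1}$ has three constituents: $\overline{\pi}_{0,1}$ (finite-dimensional, top), $\overline{\pi}_{0,2}$ (Gelfand--Kirillov dimension $2n+1$, middle) and $\overline{\pi}_{2,3}$ (Gelfand--Kirillov dimension $4n-1$, bottom).

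The core step is the selection criterion of step (5), which is essentially vacuous here because $\hat{M}(\tau_{0,0,2})$ is a singleton with multiplicity $m(\tau_{0,0,2}\!\mid_M,\sigma_{0,0,1}) = 1$. Frobenius reciprocity forces $\tau_{0,0,2}$ to appear in $\Hi_k^{0,0,1}$ with multiplicity one and to appear in no principal series induced from any other $M$-type. After checking via step (5)(a) that no element of the Weyl group sends $\gamma_k^{0,0,1}$ to the infinitesimal character of another principal series induced from $\sigma_{0,0,1}$, conditions (i)--(iii) of the algorithm single out $\overline{\pi}_{0,1}$: the constituents $\overline{\pi}_{0,2}$ and $\overline{\pi}_{2,3}$ each occur as the top of another principal series $\pi_{0,2}$ or $\pi_{2,3}$, which after back-translation correspond to $M$-types outside $\hat{M}(\tau_{0,0,2})$, so $\tau_{0,0,2}$ cannot appear in them. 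Applying the functor $\Phi$ therefore identifies $\E_k^{0,0,1}$ with the finite-dimensional Langlands quotient of $\Hi_k^{0,0,1}$ with parameters $(MA, \sigma_{0,0,1}, \pm i\lambda_k^{0,0,1}\alpha)$. The one point needing care is the uniform verification, across all $k$, that the trivial regular character associated with $\gamma_k^{0,0,1}$ is indeed $\gamma_{0,1}$ (not $\gamma_{0,j}$ for some $j > 1$, which would change the composition series); this follows by a direct dominance check modelled on the $\sigma_{0,4,2}$, $k \geq 2$ computation already displayed in Section \ref{section pforms Sp}.
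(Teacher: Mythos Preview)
Your proposal is correct and follows exactly the approach the paper intends: the paper does not give a separate proof for $\tau_{0,0,2}$ but only states that ``the other cases are similar to those considered in the real and the complex cases'', so your write-up is in fact more detailed than the paper's own treatment. The key structural observations you isolate --- that $\hat{M}(\tau_{0,0,2})=\{\sigma_{0,0,1}\}$ is a singleton with multiplicity one, that every Plancherel pole has the generic form $-i(n+\tfrac32+k)$ with no exceptional $k=0$ pole, and that $\gamma_k^{0,0,1}$ reduces to the trivial character $\gamma_{0,1}$ for all $k$ --- are exactly what makes the selection criterion of Theorem~\ref{Thm algo} collapse to the single finite-dimensional constituent $\overline{\pi}_{0,1}$.
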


\begin{Prop}[For $\tau_{1,0,0}$]\label{Prop res repr pforms - tau100}
The residue representation $\E^{a,b,c}_k$ is always irreducible and:
\begin{itemize}
    \item \underline{if $k\ne0$ or $(a,b,c) = (0,0,1)$}: finite dimensional with Langlands parameters $(MA,\sigma_{a,b,c},\pm i\lambda_k^{a,b,c}\alpha)$,
    \item \underline{if $k = 0$ and $(a,b,c)\ne(0,0,1)$}: infinite dimensional with Gelfand-Kirillov dimension $2n+1$ and Langlands parameters $(MA,\sigma_{1,0,0},\pm i\lambda_0^{1,0,0}\alpha)$.
\end{itemize}
\end{Prop}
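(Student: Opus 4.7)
My plan is to specialize the algorithm of Theorem \ref{Thm algo} to $\tau=\tau_{1,0,0}$. By Lemma \ref{Lemma decomposition tau2}, $\hat{M}(\tau_{1,0,0})=\{\sigma_{1,0,0},~\sigma_{0,1,1/2},~\sigma_{0,0,1}\}$, each with multiplicity one, so the situation is multiplicity-free. The poles of the three Plancherel densities are read off Lemma \ref{zeros Plancherel Measure tau2}: $\sigma_{0,0,1}$ has no special pole at $k=0$, whereas $\sigma_{1,0,0}$ has special pole $\pm i(n-\tfrac12)$ and $\sigma_{0,1,1/2}$ has special pole $\pm in$. For each $\sigma_{a,b,c}$ and each pole I compute the infinitesimal character
\[
\gamma_k^{a,b,c}=i\lambda_k^{a,b,c}(\varepsilon_1+\varepsilon_2)+\mu_{\sigma_{a,b,c}}+\rho_\m
\]
using the highest weight formulas of Section \ref{section pforms Sp}.

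In the \emph{generic case}, i.e.\ $k\geq 1$ for any of the three $M$-types, or $k=0$ and $\sigma=\sigma_{0,0,1}$, the coordinate of $\varepsilon_1$ in $\gamma_k^{a,b,c}$ is strictly larger than all subsequent coordinates, so the infinitesimal character lies in the translation orbit of $\gamma_{0,1}$ in Theorem \ref{Thm Col: Sp}. The composition series of $\pi_{0,1}$ has a finite-dimensional maximal quotient $\overline{\pi}_{0,1}$. To apply step (5) of the algorithm I check that no other $\delta\in\hat{M}(\tau_{1,0,0})$ yields an infinitesimal character in the same $W(\g_\C,\h_\C)$-orbit modulo integer weights; hence $\E_k^{a,b,c}\cong\Phi(\overline{\pi}_{0,1})$ is finite-dimensional with Langlands parameters $(MA,\sigma_{a,b,c},\pm i\lambda_k^{a,b,c}\alpha)$.

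In the \emph{special case} $k=0$ with $(a,b,c)\in\{(1,0,0),~(0,1,1/2)\}$, an explicit computation shows that $\gamma_0^{1,0,0}$ and $\gamma_0^{0,1,1/2}$ lie in the same Weyl-group orbit modulo integer weights: a signed permutation exchanging the two half-integer coordinates relates them. Lemma \ref{Lemma phi image} then tells me that $\Hi_0^{1,0,0}$ and $\Hi_0^{0,1,1/2}$ share a common Langlands subquotient carrying $\tau_{1,0,0}$. By step (5) of the algorithm and the multiplicity-free setting, this common subquotient is precisely the maximal quotient of $\Hi_0^{1,0,0}$, whereas in $\Hi_0^{0,1,1/2}$ it sits deeper in the composition series. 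The Langlands parameters are therefore $(MA,\sigma_{1,0,0},\pm i(n-\tfrac12)\alpha)$, and Theorem \ref{Thm Col: Sp} identifies the Gelfand--Kirillov dimension of the corresponding $\overline{\pi}_{i,j}$ as $2n+1$.

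The main obstacle is that several of the infinitesimal characters $\gamma_k^{a,b,c}$ have half-integer coordinates (arising from $c=\tfrac12$ in $\sigma_{0,1,1/2}$, from $c=1$ in $\sigma_{0,0,1}$, and from the special pole $\pm i(n-\tfrac12)$ of $\sigma_{1,0,0}$), whereas the trivial regular characters $\gamma_{i,j}$ tabulated in Theorem \ref{Thm Col: Sp} are all integral. Theorem \ref{Thm Col: red inf ch} therefore does not apply verbatim; I have to work in the analogous half-integer orbit, importing the same block picture of the composition series via a Zuckerman-type translation argument, and then to match subquotients carefully across the two principal series by tracking the $K$-type $\tau_{1,0,0}$ and asserting that $\sigma_{1,0,0}$ rather than $\sigma_{0,1,1/2}$ produces the maximal quotient.
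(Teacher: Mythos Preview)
Your overall approach---specializing the algorithm of Theorem \ref{Thm algo} to $\tau_{1,0,0}$---is exactly what the paper intends (the paper gives no separate proof for this proposition, only the remark that the multiplicity-one cases are ``similar to the real and the complex cases''). The generic case and the $k=0$ matching of $\sigma_{1,0,0}$ with $\sigma_{0,1,1/2}$ are the right moves.

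The genuine problem is your last paragraph. The concern about half-integer infinitesimal characters is based on a miscalculation, and the phrase ``same Weyl-group orbit modulo integer weights'' is both imprecise and unnecessary. If you redo the computation of $\gamma_k^{a,b,c}=i\lambda_k^{a,b,c}(\varepsilon_1+\varepsilon_2)+\mu_{\sigma_{a,b,c}}+\rho_\m$ using the same conventions as in the paper's worked example for $\sigma_{0,4,2}$ (where the two $\Sp(1)$ factors of $M$ contribute half-integers to $\rho_\m$ along the $\varepsilon_1,\varepsilon_2$ directions), all the $\gamma_k^{a,b,c}$ come out integral: the half-integers from $i\lambda_k^{a,b,c}$ cancel exactly against those in $\mu_\sigma+\rho_\m$. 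Concretely, one finds for $k=0$ that $\gamma_0^{1,0,0}=(n,\,n-1,\,n+1,\,n-2,\ldots,1)$ lies in the $\gamma_{1,2}$-chamber and $\gamma_0^{0,1,1/2}=(n+1,\,n-1,\,n,\,n-2,\ldots,1)$ lies in the $\gamma_{0,2}$-chamber. These are genuinely $W_\C$-conjugate (in fact both belong to the trivial infinitesimal character orbit), so Theorem \ref{Thm Col: red inf ch} applies verbatim. Reading off the composition series of $\pi_{1,2}$ and $\pi_{0,2}$ from Theorem \ref{Thm Col: Sp}, the unique common subquotient satisfying conditions (i)--(iii) of step (5c) is $\overline{\pi}_{1,2}$ (the other common piece $\overline{\pi}_{1,3}$ also occurs in $\pi_{1,3}$ and $\pi_{0,3}$, which do not arise from any $\delta\in\hat M(\tau_{1,0,0})$). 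This immediately gives the Langlands parameters $(MA,\sigma_{1,0,0},\pm i\lambda_0^{1,0,0}\alpha)$ and Gelfand--Kirillov dimension $2n+1$. No ``Zuckerman-type translation in a half-integer orbit'' is needed.
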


\begin{Prop}[For $\tau_{1,4,4}$, $n>5$]\label{Prop res repr pforms - tau144}
If $(a,b,c,k)\ne (0,4,2,0)$, the residue representation $\E^{a,b,c}_k$ is irreducible. If $\E^{a,b,c}_k$ is not listed below, it is finite dimensional with Langlands parameter $(MA,\sigma_{a,b,c},\lambda_k^{a,b,c})$. 
\begin{itemize}
    \item \underline{if $k = 0$ and $(a,b,c)\in\{(1,4,2),(1,3,3/2),(0,5,5/2)\}$}: infinite dimensional with Gelfand-Kirillov dimension $4n-2$ and Langlands parameters $(MA,\sigma_{1,4,2},\lambda_0^{1,4,2})$,
    \item \underline{if $k = 1$ and $(a,b,c)\in\{(1,4,2),(0,5,3/2)\}$}: infinite dimensional with Gelfand-Kirillov dimension $2n+1$ and Langlands parameters $(MA,\sigma_{1,4,2},\lambda_1^{1,4,2})$,
    \item \underline{if $k = 1$ and $(a,b,c)\in\{(1,3,3/2),(0,4,1)\}$}: infinite dimensional with Gelfand-Kirillov dimension $2n+1$ and Langlands parameters $(MA,\sigma_{1,3,3/2},\lambda_1^{1,3,3/2})$,
    \item \underline{if $k = 0$ and $(a,b,c)\in\{(1,3,5/2),(1,2,2)\}$}: infinite dimensional with Gelfand-Kirillov dimension $2n+1$ and Langlands parameters $(MA,\sigma_{1,3,5/2},\lambda_0^{1,3,5/2})$,
    \item \underline{if $k = 1$ and $(a,b,c)\in\{(1,3,5/2),(0,4,2)\}$}: infinite dimensional with Gelfand-Kirillov dimension $2n+1$ and Langlands parameters $(MA,\sigma_{1,3,5/2},\lambda_1^{1,3,5/2})$,
    \item \underline{if $k = 1$ and $(a,b,c)\in\{(1,2,2),(0,3,3/2)\}$}: infinite dimensional with Gelfand-Kirillov dimension $2n+1$ and Langlands parameters $(MA,\sigma_{1,2,2},\lambda_1^{1,2,2})$,
    \item \underline{if $k = 0$ and $(a,b,c)\in\{(0,4,3),(0,3,5/2)\}$}: infinite dimensional with Gelfand-Kirillov dimension $2n+1$ and Langlands parameters $(MA,\sigma_{0,4,3},\lambda_1^{0,4,3})$,
    \item \underline{if $k = 0$ and $(a,b,c)=(0,3,3/2)$}: infinite dimensional with Gelfand-Kirillov dimension $2n+1$ and Langlands parameters $(MA,\sigma_{0,4,2},\lambda_1^{0,4,2})$.
\end{itemize}
The representation $\E^{0,4,2}_0$ is the sum of two representations. The one of Langlands parameter $(MA,\sigma_{0,4,2},\lambda_0^{0,4,2})$ and Gelfand-Kirillov dimension $2n+1$ and the other of Langlands parameter $(MA,\sigma_{1,4,2},\lambda_0^{1,4,2})$ and Gelfand-Kirillov dimension $4n-2$. 
\end{Prop}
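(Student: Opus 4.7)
The plan is to run the algorithm of Theorem \ref{Thm algo} on every pair $(\sigma_{a,b,c},\lambda_k^{a,b,c})$ appearing in Lemma \ref{zeros Plancherel Measure tau114}, using the same bookkeeping as in the worked-out case of $\sigma_{0,4,2}$ above. First I would compute each infinitesimal character
\[
\gamma_k^{a,b,c} \;=\; i\lambda_k^{a,b,c}(\varepsilon_1+\varepsilon_2) \;+\; \mu_{\sigma_{a,b,c}}\;+\;\rho_\m
\]
and expand it in the $\varepsilon_i$-basis. For the ``generic'' choice of $k$ (i.e. whenever $k$ is large enough that $\lambda_k^{a,b,c}$ lies in the arithmetic progression $-i(\rho_\alpha + k')$ shared by all the Plancherel densities), this character maps under $\Psi$ to the trivial regular character $\gamma_{0,1}$ of Theorem \ref{Thm Col: Sp}, whose principal series has the finite-dimensional $\overline{\pi}_{0,1}$ as its maximal quotient. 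In each such case one must also verify that no other $\delta\in\hat{M}(\tau_{1,4,4})$ realizes the same Weyl-orbit of infinitesimal character (otherwise one would have to compare composition series across several $\pi_{\delta_w}$'s), which reduces the problem to a direct check on the $\varepsilon_i$-expansions; this forces $\E_k^{a,b,c}=\Phi(\overline{\pi}_{0,1})$, finite dimensional.

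For the small $k$ (the ``exceptional'' poles $\lambda_0^{a,b,c}$ and a few $\lambda_1^{a,b,c}$), $\gamma_k^{a,b,c}$ corresponds under $\Psi$ to a non-trivial $\gamma_{i,j}$ whose Weyl orbit typically meets the infinitesimal characters of principal series induced by several other $M$-types in $\hat{M}(\tau_{1,4,4})$. For each such pole I would tabulate, exactly as in the table laid out for $\sigma_{0,4,2}$ at $k=0$, the list of candidates $\delta_w$ together with the associated trivial regular characters $\gamma_{\delta_w}$ and the composition diagrams of the corresponding $\pi_{l,m}$'s from Theorem \ref{Thm Col: Sp}. Lemma \ref{Lemma phi image} then identifies a unique irreducible subquotient $\overline{\pi}$ that simultaneously (i) appears as the maximal quotient of the principal series in whose $K$-spectrum $\tau_{1,4,4}$ actually sits, (ii) appears as a non-maximal subquotient in each of the other $\pi_{\delta_w}$, and (iii) does not appear in any $\pi_{l,m}$ corresponding to a $\delta\notin\hat{M}(\tau_{1,4,4})$. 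This picks out the Langlands parameters listed in the proposition; the Gelfand--Kirillov dimensions are then read off Theorem \ref{Thm Col: Sp} from the position of the $(i,j)$ box.

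The delicate step is the multiplicity-two phenomenon at $(a,b,c,k)=(0,4,2,0)$: Lemma \ref{Lemma decomposition tau2} gives $m(\tau_{1,4,4}|_M,\sigma_{0,4,2})=2$, so the algorithm's uniqueness statement must be replaced by condition $(i')$ of Theorem \ref{Thm algo}, which allows two subquotients. This is precisely the case already carried out in the subsection ``Proof for $\sigma_{0,4,2}$'' just above, where the Weyl orbit meets four other candidates $\sigma_{1,4,2},\sigma_{1,3,3/2},\sigma_{0,5,5/2},\sigma_{0,3,3/2}$, and the diagrammatic comparison pins down the two components $\overline{\pi}_{0,5}$ and $\overline{\pi}_{1,6}$ as the summands of $\E_0^{0,4,2}$, of Gelfand--Kirillov dimensions $2n+1$ and $4n-2$ respectively. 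For all other small-$k$ entries in the statement I expect multiplicity one to prevail, so the analogous tabulation gives a single irreducible $\E_k^{a,b,c}$.

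The main obstacle, apart from the already-handled $(0,4,2,0)$ case, is a careful enumeration: since many of the poles in Lemma \ref{zeros Plancherel Measure tau114} share $\varepsilon_i$-coordinates once $\rho_\m$ is added, one has to list them systematically and check by hand which pairs $(a,b,c,k)$ and $(a',b',c',k')$ give Weyl-conjugate infinitesimal characters. This is what underlies the grouping of cases in the statement (e.g.\ the pairing $\{(1,4,2,0),(1,3,3/2,0),(0,5,5/2,0)\}$, or $\{(1,3,5/2,1),(0,4,2,1)\}$): each group consists exactly of those $M$-types in $\hat{M}(\tau_{1,4,4})$ whose pole produces the same Weyl orbit of infinitesimal character, and whose minimal $K$-type is $\sigma_{a,b,c}$ in the sense of the Vogan norm \eqref{eq Vogan norm}. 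Once these groupings are established, the argument proceeds purely diagrammatically as in the $(0,4,2,0)$ case, and the wave front sets follow from \cite[Theorem~2]{ROBY2021} as in the real case corollary.
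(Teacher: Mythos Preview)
Your proposal is correct and follows essentially the same approach as the paper: the paper only works out in detail the multiplicity-two case $(a,b,c,k)=(0,4,2,0)$ (and $k=1$ for $\sigma_{0,4,2}$) and then states that ``the other cases are similar to those considered in the real and the complex cases,'' which is exactly the case-by-case application of the algorithm of Theorem~\ref{Thm algo} that you outline. One small wording slip: in your last paragraph you write ``whose minimal $K$-type is $\sigma_{a,b,c}$,'' but $\sigma_{a,b,c}$ is an $M$-type --- what singles out the Langlands parameter within each group is not a Vogan-norm minimization among $M$-types but rather the identification of the unique subquotient $\overline{\pi}$ satisfying conditions (i)--(iii) of step~(5c) in Theorem~\ref{Thm algo}.
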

\bibliographystyle{alpha}
\bibliography{Biblio}

\begin{thebibliography}{HPP17b}

\bibitem[BS79]{Bald1}
M.~W. Baldoni~Silva.
\newblock Branching theorems for semisimple lie groups of real rank one.
\newblock {\em Rendiconti del Seminario Matematico dell' Universit\`a di
  Padova}, 61:229--250, 1979.

\bibitem[Cam97]{Camp4}
R.~Camporesi.
\newblock The spherical transform for homogeneous vector bundles over
  {R}iemannian symmetric spaces.
\newblock {\em J. Lie Theory}, 7(1):29--60, 1997.

\bibitem[CH94]{CampHigu-pforms}
R.~Camporesi and A.~Higuchi.
\newblock The {P}lancherel measure for {$p$}-forms in real hyperbolic spaces.
\newblock {\em J. Geom. Phys.}, 15(1):57--94, 1994.

\bibitem[Col85]{CollinRROLG}
D.~H. Collingwood.
\newblock {\em Representations of rank one {L}ie groups}, volume 137 of {\em
  Research Notes in Mathematics}.
\newblock Pitman (Advanced Publishing Program), Boston, MA, 1985.

\bibitem[CP04]{PedCar}
G.~Carron and E.~Pedon.
\newblock On the differential form spectrum of hyperbolic manifolds.
\newblock {\em Ann. Sc. Norm. Super. Pisa Cl. Sci. (5)}, 3(4):705--747, 2004.

\bibitem[GZ95]{GuillopeZworski95}
L.~Guillop{\'e} and M.~Zworski.
\newblock Upper bounds on the number of resonances for non-compact {Riemann}
  surfaces.
\newblock {\em J. Funct. Anal.}, 129(2):364--389, 1995.

\bibitem[HP09]{HilgPasq}
J.~Hilgert and A.~Pasquale.
\newblock Resonances and residue operators for symmetric spaces of rank one.
\newblock {\em J. Math. Pures Appl.}, 91(5):495 -- 507, 2009.

\bibitem[HPP16]{HiPaPz3}
J.~Hilgert, A.~Pasquale, and T.~Przebinda.
\newblock Resonances for the {L}aplacian: the cases {$BC_2$} and {$C_2$}
  (except {${\rm SO}_0(p,2)$} with {$p>2$} odd).
\newblock In {\em Geometric methods in physics}, Trends Math., pages 159--182.
  Birkh\"{a}user/Springer, [Cham], 2016.

\bibitem[HPP17a]{HiPaPz}
J.~Hilgert, A.~Pasquale, and T.~Przebinda.
\newblock Resonances for the {L}aplacian on products of two rank one riemannian
  symmetric spaces.
\newblock {\em J. of Funct. Anal.}, 272(4):1477 -- 1523, 2017.

\bibitem[HPP17b]{HiPaPz2}
J.~Hilgert, A.~Pasquale, and T.~Przebinda.
\newblock Resonances for the {L}aplacian on {R}iemannian symmetric spaces: the
  case of {$SL(3,\Bbb{R})/SO(3)$}.
\newblock {\em Represent. Theory}, 21:416--457, 2017.

\bibitem[Kna01]{Kna1}
A.W. Knapp.
\newblock {\em Representation Theory of Semisimple Groups: An Overview Based on
  Examples}.
\newblock Princeton Mathematical Series. Princeton University Press, 2001.

\bibitem[KS71]{Knapp&Stein}
A.~W. {Knapp} and E.~M. {Stein}.
\newblock {Intertwining operators for semisimple groups}.
\newblock {\em {Ann. Math. (2)}}, 93:489--578, 1971.

\bibitem[Mia79]{Mia}
R.~J. Miatello.
\newblock On the {P}lancherel measure for linear lie groups of rank one.
\newblock {\em Manusc. math.}, 29(2):249--276, Jun 1979.

\bibitem[Min92]{Minemura}
K.~Minemura.
\newblock Invariant differential operators and spherical sections on a
  homogeneous vector bundle.
\newblock {\em Tokyo J. Math.}, 15(1):231--245, 1992.

\bibitem[MV05]{MaVa05}
R.~Mazzeo and A.~Vasy.
\newblock Analytic continuation of the resolvent of the {L}aplacian on
  symmetric spaces of noncompact type.
\newblock {\em J. Funct. Anal.}, 228(2):311--368, 2005.

\bibitem[MW00]{MiaWill}
R.~J. Miatello and C.~E. Will.
\newblock The residues of the resolvent on {Damek}-{Ricci} spaces.
\newblock {\em Proc. Am. Math. Soc.}, 128(4):1221--1229, 2000.

\bibitem[Olb94]{Olb}
M.~Olbrich.
\newblock Die {P}oisson–transformation für homogene vektorbündel.
\newblock {\em Thesis, Humboldt Universität, Berlin}, 1994.

\bibitem[Par15]{Parthasarathy}
R.~Parthasarathy.
\newblock Classification of discrete series by minimal {$K$}-type.
\newblock {\em Represent. Theory}, 19:167--185, 2015.

\bibitem[Ped94]{PedTh}
E.~Pedon.
\newblock Analyse harmonique des formes différentielles sur l'espace
  hyperbolique réel.
\newblock {\em Thesis, Université Henry Poincaré, Nancy}, 1994.

\bibitem[Ped99]{PedonpformsC}
E.~Pedon.
\newblock Harmonic analysis for differential forms on complex hyperbolic
  spaces.
\newblock {\em J. Geom. Phys.}, 32(2):102--130, 1999.

\bibitem[Ped05]{PedonpformsH}
E.~Pedon.
\newblock The differential form spectrum of quaternionic hyperbolic spaces.
\newblock {\em Bull. Sci. Math.}, 129(3):227--265, 2005.

\bibitem[Rob22]{ROBY2021}
S.~Roby.
\newblock Resonances of the {L}aplace operator on homogeneous vector bundles on
  symmetric spaces of real rank-one.
\newblock {\em Advances in Mathematics}, 408:108555, 2022.

\bibitem[RS18]{Ricci}
F.~Ricci and A.~Samanta.
\newblock Spherical analysis on homogeneous vector bundles.
\newblock {\em Adv. Math.}, 338:953 -- 990, 2018.

\bibitem[Str05]{Stro05}
A.~Strohmaier.
\newblock Analytic continuation of resolvent kernels on noncompact symmetric
  spaces.
\newblock {\em Math. Z.}, 250(2):411--425, 2005.

\bibitem[Vog77]{VoganProcAcad}
D.~A. Vogan, Jr.
\newblock Classification of the irreducible representations of semisimple {L}ie
  groups.
\newblock {\em Proc. Nat. Acad. Sci. U.S.A.}, 74(7):2649--2650, 1977.

\bibitem[Vog81]{VoganRRRLG}
D.~A. Vogan, Jr.
\newblock {\em Representations of real reductive {L}ie groups}, volume~15 of
  {\em Progress in Mathematics}.
\newblock Birkh\"{a}user, Boston, Mass., 1981.

\bibitem[Wal73]{Wall1}
N.~R. Wallach.
\newblock {\em Harmonic analysis on homogeneous spaces}.
\newblock Marcel Dekker, Inc., New York, 1973.
\newblock Pure and Applied Mathematics, No. 19.

\bibitem[Wil03]{Will}
C.~E. Will.
\newblock The meromorphic continuation of the resolvent of the {L}aplacian on
  line bundles over {$\bold CH(n)$}.
\newblock {\em Pacific J. Math.}, 209(1):157--173, 2003.

\end{thebibliography}
\end{document}